\documentclass[10pt]{amsart} 
\usepackage{amssymb}
\usepackage{amsmath}
\usepackage{enumitem}
\usepackage{mathrsfs}
\usepackage[english]{babel}
\usepackage[all]{xy}
\usepackage{hyperref}
\usepackage{marginnote}
\usepackage{mathtools}

\usepackage{tikz-cd}
\usepackage{tikz}
\usepackage{fullpage}

\usepackage{stmaryrd}

\newtheorem{theorem}{Theorem}[section]
\newtheorem*{theorem*}{Theorem}
\newtheorem{lemma}[theorem]{Lemma}
\newtheorem{proposition}[theorem]{Proposition}
\newtheorem{corollary}[theorem]{Corollary}

\newtheorem{alphtheorem}{Theorem}
\newtheorem{alphcorollary}[alphtheorem]{Corollary}

\theoremstyle{definition}
\newtheorem*{ack}{Acknowledgements}

\newtheorem{remark}[theorem]{Remark}

\newtheorem{definition}[theorem]{Definition}
\newtheorem{setup}[theorem]{Setup}

\numberwithin{equation}{section} \numberwithin{figure}{section}

\DeclareMathOperator{\Aut}{Aut}

\DeclareMathOperator{\Spec}{Spec}

\DeclareMathOperator{\vol}{vol}

\DeclareMathOperator{\Hilb}{Hilb}
\DeclareMathOperator{\Grass}{Grass}

\DeclareMathOperator{\Tor}{Tor}

\DeclareMathOperator{\supp}{supp}

\newcommand*\ratmap{\mathbin{\tikz [baseline=0ex,-latex, dashed, ->] \draw [densely dashed] (0em,0.58ex) -- (1.3em,0.58ex);}}

\DeclarePairedDelimiter\abs{\lvert}{\rvert}

\DeclarePairedDelimiter\floor{\lfloor}{\rfloor}

\newcommand\HH{\mathrm{H}}

\usepackage{color}

\definecolor{orange}{rgb}{1,0.5,0}

\RequirePackage[normalem]{ulem}

\title[Maehara--Severi for maps of general type]{The Theorem of Maehara--Severi for maps of general type}
 
\author{Finn Bartsch}
\address{Finn Bartsch \\
IMAPP Radboud University Nijmegen \\
PO Box 9010, 6500GL \\
Nijmegen, The Netherlands}
\email{f.bartsch@math.ru.nl}

\author{Ariyan Javanpeykar}
\address{Ariyan Javanpeykar \\ 
IMAPP Radboud University Nijmegen \\
PO Box 9010, 6500GL\\
Nijmegen, The Netherlands}
\email{ariyan.javanpeykar@ru.nl}

\author{Erwan Rousseau}
\address{Erwan Rousseau \\
Univ Brest\\
CNRS UMR 6205 \\
Laboratoire de Mathematiques de Bretagne Atlantique\\ F-29200 Brest, France}
\email{erwan.rousseau@univ-brest.fr}

\subjclass[2010]
{14D99  
(14E22, 
14G99,  
11G99)} 

\keywords{General type varieties, Minimal Model Program, function fields, rational points, Lang-Vojta conjecture}

\begin{document}

\begin{abstract}  
We prove a finiteness result for dominant rational maps whose orbifold base is of general type.
Our finiteness result generalizes Maehara's theorem that a given variety dominates only finitely many projective varieties of general type up to birational equivalence, and also answers a question of Campana on the finiteness of Bogomolov sheaves.
We give several further applications, including finiteness results for maps to curves, abelian varieties, and K3 surfaces.
\end{abstract}
\maketitle

\tableofcontents

\thispagestyle{empty}

\section{Introduction}
In 1926, Severi proved that for a variety $Y$ over an algebraically closed field $k$ of characteristic zero, the set of isomorphism classes of smooth projective curves $C$ of genus at least two dominated by $Y$ is finite; see \cite{Samuel}.
In 1983, Severi's theorem was generalized to higher-dimensional varieties by Maehara \cite{Maehara} after work of Martin-Deschamps and Lewin-M\'en\'egaux \cite{DeschampsLewinMenegaux}, see also \cite{GuerraPirola1, GuerraPirola2}.
In this paper we extend these finiteness results to Campana's maps of general type.
Whereas Maehara's theorem concerns maps to varieties of general type, our version concerns maps whose orbifold base is of general type, a substantially broader condition.
Conceptually, our result shows that finiteness is controlled by the orbifold structure induced by the map rather than by the birational type of the target itself.
\smallbreak

To state Maehara's theorem, let $X$ be a variety.
We say that two rational maps $f \colon X \ratmap Y$ and $f' \colon X \ratmap Y'$ are \emph{equivalent} if there is a birational map $\psi \colon Y' \ratmap Y$ such that $\psi \circ f' = f$.
We say that a proper variety $Y$ is of \emph{general type} if there is a resolution of singularities $Y' \to Y$ with $Y'$ a smooth projective variety and $\omega_{Y'}$ big. 
 
\begin{theorem*}[Maehara]
Let $X$ be a smooth variety. 
Then the set of equivalence classes of dominant rational maps $f \colon X \ratmap Y$ with $Y$ a proper variety of general type is finite.
\end{theorem*}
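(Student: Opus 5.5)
The plan is to reduce to a boundedness statement and then to a rigidity statement, as in Maehara's original approach.

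\emph{Reductions.} Replace $X$ by a smooth projective compactification. Since $X$ is smooth, it is open in its compactification $\bar X$, so every dominant rational map from $X$ extends to one from $\bar X$. We may therefore assume $X$ is smooth projective. Given $f\colon X\ratmap Y$ with $Y$ of general type, replace $Y$ by a resolution $Y'$ with $\omega_{Y'}$ big. Then, by Stein factorisation after resolving $f$, we split the map into two pieces: a map with connected fibres onto some $Z$, followed by a generically finite map $Z\to Y$. The variety $Z$ is again of general type, because $K_Z \geq$ the pullback of $K_Y$ plus an effective ramification divisor.

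\emph{Step 1: boundedness.} The key claim is that the targets $Y$ form a bounded family. We would pass to the canonical model $Y_{\mathrm{can}}$, which exists by BCHM and is unique up to isomorphism in the birational class. Its volume satisfies $\vol(K_{Y})\le$ a quantity controlled by $X$. To see this, note that $f^*\colon H^0(Y,mK_Y)\hookrightarrow H^0(\bar X',\Omega^{\dim Y}_{\bar X'}{}^{\otimes m})$ is injective, and the right-hand side is bounded in terms of $X$ and $m$. Combined with Hacon--McKernan--Xu, this gives a uniform $m_0$ such that $|m_0K|$ embeds every canonical model of dimension $d$, and the bounded volume then gives a uniform bound on the Hilbert polynomial. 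Hence the canonical models $Y_{\mathrm{can}}$, together with the induced maps $X\ratmap Y_{\mathrm{can}}\subset\mathbb P^N$, are parametrised by finitely many components of a suitable Hom or Hilbert scheme. Concretely, we record the graph closure in $X\times\mathbb P^N$, whose degree is bounded by the same sections.

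\emph{Step 2: rigidity.} It then suffices to show that in a connected family $f_t\colon X\ratmap Y_t$ of such maps, all members are equivalent. This is the main obstacle. The approach we would try first goes through an infinitesimal deformation. Such a deformation gives a section of $f^*T_{Y}$ on $X$, and we need to show it vanishes modulo automorphisms. Since $\Aut$ of a variety of general type is finite, that reduces to showing the deformation is trivial. The standard argument shows that the family over $T$ induces a map $X\times T\ratmap \mathcal Y$ whose pluricanonical pullbacks vary. However, the pullback of $H^0(Y_t,mK_{Y_t})$ is a subspace of the fixed space $H^0(X,(\Omega^d_X)^{\otimes m})$, and that subspace determines $Y_t$ as the image of the pluricanonical map composed with $f_t$. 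One must show this subspace is locally constant. To do so, use Kobayashi--Ochiai style rigidity: the variation would produce a dominant map $X\times T\ratmap Y$ with positive-dimensional fibres in the $T$-direction over each point, contradicting finiteness of Kobayashi--Ochiai for dominant maps to general type. Alternatively, use the fact that $\mathcal Y\to T$ is isotrivial because families of general type varieties dominated by a fixed variety can't vary (again Kobayashi--Ochiai). Once isotriviality holds, each $Y_t\cong Y_0$, and the maps $X\ratmap Y_0$ in a connected family are constant, since the set of dominant rational maps to a general type variety is finite.

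Finiteness of components and constancy in families together give the theorem.
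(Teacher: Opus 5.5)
\textbf{Main gap: Step~2.} You correctly isolate what has to be shown: the subspace $W_t:=f_t^*H^0(Y_t,mK_{Y_t})$ of a fixed vector space $V$ must be independent of $t$. Once this is known you are done, and you need neither isotriviality nor Kobayashi--Ochiai. Choose $m$ uniformly via Hacon--McKernan--Xu so that $|mK_{Y_t}|$ is birational. Then $f_t$ followed by the $m$-canonical map of $Y_t$ is the fixed map $X\ratmap\mathbb{P}(W^\vee)$, so all $f_t$ are equivalent (Corollary~\ref{cor:iitaka}).

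Your argument for this constancy does not work. Kobayashi--Ochiai concerns dominant maps to \emph{one fixed} variety of general type, and your family has no fixed $Y$. The real target of $X\times T\ratmap\mathcal{Y}$ is the total space $\mathcal{Y}$, which need not be of general type. The claim that $\mathcal{Y}\to T$ is isotrivial ``because families of general type varieties dominated by a fixed variety can't vary'' is exactly Maehara's rigidity theorem, i.e.\ the heart of the proof, not a consequence of Kobayashi--Ochiai. The infinitesimal sketch via $f^*T_Y$ only sees deformations of the map with the target held fixed, so it does not help either.

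The missing ingredient is a positivity argument:
\begin{enumerate}
\item Restrict to a curve $C\subseteq T$, compactify to $\bar C$ with an extended family $\bar q\colon\bar{\mathcal{Y}}\to\bar C$, and view $t\mapsto W_t$ as a morphism $\bar C\to\Grass_\nu(V)$.
\item Its kernel subbundle $\mathcal K\subseteq V\otimes\mathcal{O}_{\bar C}$ contains $\bar q_*\omega_{\bar{\mathcal{Y}}/\bar C}^{\otimes m}$ with equal rank.
\item The latter is nef by weak positivity (Viehweg, Kawamata; Theorem~\ref{thm:campana}), so $\deg\mathcal K\ge 0$ and the pulled-back universal quotient has degree $\le 0$.
\item Its determinant is the pullback of the ample Pl\"ucker bundle, so the map must be constant.
\end{enumerate}
This is Theorem~\ref{thm:linearization_is_constant}, and it is also Maehara's own argument.

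\textbf{Second gap: Step~1 when $\dim Y<\dim X$.} The injection $H^0(Y,mK_Y)\hookrightarrow H^0(\bar X',(\Omega^{d}_{\bar X'})^{\otimes m})$ bounds $h^0(mK_Y)$ for each fixed $m$, and hence the embedding dimension. It does not bound $\vol(K_Y)$ or the degree of the graph, because $h^0((\Omega^d)^{\otimes m})$ can grow exponentially in $m$. For example, on an abelian variety of dimension $n$ it equals $\binom{n}{d}^m$. Your Stein factorisation does not fix this: the connected-fibre part is still non-equidimensional, and you never use the factorisation afterwards.

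The paper, like Maehara, first proves the equidimensional case. There $K_{X'}=f^*K_{Y'}+R$ with $R\ge 0$ gives $h^0(mK_{Y'})\le h^0(mK_{\bar X})$, which bounds both the volume and the graph degree in terms of $\bar X$ alone. It then handles $\dim Y<\dim X$ by induction, restricting to a very general hyperplane section. This restriction preserves dominance, general type (Corollary~\ref{precomposition_stability}), and pairwise non-equivalence.
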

 
Maehara proved his finiteness theorem contingent on a boundedness result (see Theorem \ref{thm:hacon} for a precise statement) which was only proven later after contributions by Tsuji \cite{Tsuji}, Takayama \cite{Takayama1} and Hacon--McKernan \cite{HaconMcKernan}. \smallbreak

Maehara's theorem has found broad applications in algebraic geometry. For example, it is used in the characterization of complex algebraic surfaces definable over a number field \cite[Theorem~7]{GonzalezDiez}, in the construction of Albanese exotic varieties \cite{SchoenChad}, in the proof of the non-isotriviality of certain families of varieties of general type \cite[Lemma~3.2]{Abramovich}, and it forms the starting point for investigations on images of products of (general) curves \cite{BastianelliPirola, GuerraPirola3, LeePirola}.
Finally, let us also mention Hwang and Mok's analogue of Maehara's theorem for Fano varieties \cite{HwangMok}. \smallbreak

To state our generalization of Maehara's result, we follow Campana \cite[Definition~4.2]{Ca11} and define the orbifold base associated to a dominant morphism of smooth varieties, which encodes the nowhere reduced fibers (also called ``inf-multiple fibers'') of this morphism in terms of a $\mathbb{Q}$-divisor on the target. 
 
Let $f \colon X \to Y$ be a dominant morphism of smooth varieties over $k$.
For a prime divisor $D \subseteq Y$, the pullback $f^* D$ is a (possibly empty) divisor which we may decompose as 
\[ f^* D = R + \sum_i a_i F_i, \]
where no irreducible component of $R$ dominates $D$, the $F_i$ are prime divisors dominating $D$ and $a_i\geq 1$. 
We define the \emph{inf-multiplicity} of $f$ over $D$ to be $m_f(D) = \inf(a_i) \in \mathbb{Z}_{\geq 1}\cup \{\infty\}$ (where we adopt the convention that the infimum of the empty set is $\infty$).
Note that the fiber of $f$ over the generic point of $D$ is nowhere reduced if and only if $m_f(D) > 1$. 
We define the \emph{orbifold base divisor} $\Delta_f$ \emph{of $f$} to be the $\mathbb{Q}$-divisor 
\[ \Delta_f := \sum_{D\subset Y} \left(1-\frac{1}{m_f(D)}\right) D, \]
where the sum runs over all prime divisors $D$ of $Y$ (and where we adopt the convention that $\frac{1}{\infty} = 0$).
We refer to the pair $(Y,\Delta_f)$, or simply to the divisor $\Delta_f$, as the \emph{orbifold base of}~$f$.
It is not hard to show that if $g \colon X' \to X$ is a proper birational morphism with $X'$ smooth, then $\Delta_f = \Delta_{f\circ g}$ (cf. Lemma~\ref{orbifold_base_wd}).
Thus, we can define the orbifold base of a dominant rational map $f \colon X \ratmap Y$ by $\Delta_f := \Delta_{f'}$ with $f' \colon X' \to Y$ a resolution of indeterminacy.

We now extend Campana's notion of a fibration of general type \cite[Definition~1.3.1]{Ca04} to arbitrary dominant rational maps.
(We note that Campana's definition of the Kodaira dimension of a fibration also involves modifying $X$; this is unnecessary by the above.)

\begin{definition} \label{def:kodaira_dim_of_morphism}
Let $f \colon X \ratmap Y$ be a dominant rational map of smooth (not necessarily proper) varieties. We define the \emph{Kodaira dimension of $f$} to be 
\[ \kappa(f) = \inf_{f'\colon X\dashrightarrow Y'} \kappa(Y', K_{Y'} + \Delta_{f'}), \]
where $\kappa$ denotes the Iitaka dimension and the infimum is taken over all dominant rational maps $f' \colon X \ratmap Y'$ with $Y'$ smooth proper which are equivalent to $f$.
We say that $f$ is \emph{of general type} if $\kappa(f) = \dim Y$.\footnote{The notion of a morphism of general type as defined by Brunebarbe \cite{BrunebarbeGLL} (being a morphism whose generic fiber is of general type) is unrelated to our definition.}
\end{definition}

We note that the Kodaira dimension of a rational map $X \ratmap Y$ can increase when replacing $X$ by a dense open.
It is however invariant under proper birational modifications $X' \to X$ by Lemma~\ref{orbifold_base_wd}.
We also note that while a priori, Definition~\ref{def:kodaira_dim_of_morphism} requires one to compute the Iitaka dimension of infinitely many divisors defined on infinitely many smooth proper models of $Y$, we show in Remark~\ref{compute_kodaira_dim_on_neat_model} how to compute $\kappa(f)$ using only one model of the map $f$.\smallbreak

The Kodaira dimension of a dominant morphism $f \colon X \to Y$ can be interpreted as measuring the number of pluridifferential forms (perhaps with poles) $\omega$ on $Y$ whose pullback $f^*\omega$ is regular on $X$ with at most logarithmic poles at infinity; see Lemma~\ref{gen_type_criterion} for a precise statement.
\smallbreak

Campana's original motivation for introducing maps of general type (or fibrations of general type) is his notion of a \emph{special} variety \cite[Definition~2.1]{Ca04}, where we recall that a smooth variety $X$ is special if there is no dominant rational map $X\ratmap Y$ of general type with $Y$ positive-dimensional.\smallbreak

The following examples illustrate that maps of general type naturally arise even when the target is not of general type.

If $X$ is a smooth projective variety of general type and $G$ is a finite group acting faithfully on $X$ such that the quotient $Y := X/G$ is smooth, then the quotient map $\pi \colon X \to Y$ is of general type (whereas the target $Y$ need not be of general type).
Indeed, it follows directly from the definition that the orbifold base of $\pi$ is given as $\Delta = \sum_{i} (1-\frac{1}{m_i})B_i$, where the $B_i$ run over the irreducible components of the branch locus and $m_i$ is the order of the stabilizer over $B_i$.
By Riemann--Hurwitz, we have that $\pi^*(K_Y + \Delta) = K_X$; in particular the bigness of $K_X$ implies that $K_Y + \Delta$ is big as well.
Since $Y$ is smooth and $\pi$ is finite surjective, $\pi$ is flat, hence neat (Definition~\ref{def:neat}), so that the infimum in Definition~\ref{def:kodaira_dim_of_morphism} is in fact realized on the model $X \to Y$ (Remark~\ref{compute_kodaira_dim_on_neat_model}).
Thus, $X \to Y$ is of general type.\smallbreak

Another source of examples arises from elliptic fibrations with divisible fibres: if $\pi \colon X \to \mathbb{P}^1$ is an elliptic fibration with at least $5$ divisible fibers, then $\pi$ is of general type (in this case, the classical notion of the ``orbifold base of an elliptic fibration'' coincides with the definition given above). \smallbreak

Motivated mainly by applications to rational points on varieties over function fields, and also by questions related to the finiteness of Bogomolov sheaves on varieties (Corollary~\ref{cor:campana_question}), we prove the following generalization of the theorems of Severi and Maehara to maps of general type.

\begin{alphtheorem}\label{thm:severi_orbifold}
Let $X$ be a smooth variety over an algebraically closed field $k$ of characteristic zero.
Then the set of equivalence classes of dominant rational maps $f\colon X \ratmap Y$ with $f$ of general type is finite.
\end{alphtheorem}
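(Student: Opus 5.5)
We follow the structure of Maehara's argument: first prove that the relevant targets with their orbifold bases form a bounded family, then show that each such pair carries only finitely many maps from $X$ up to equivalence.

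\textbf{Step 1: reductions.} Resolve $X$ to a smooth projective compactification $\overline{X}$ with simple normal crossings boundary $E$. By Lemma~\ref{gen_type_criterion}, the condition that $f$ is of general type is a statement about pluri-forms on $Y$ whose pullbacks are logarithmic along $E$. We may therefore work with the log pair $(\overline{X},E)$ and with rational maps $\overline{X}\ratmap Y$.

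By Remark~\ref{compute_kodaira_dim_on_neat_model}, each equivalence class admits a neat model $f\colon X'\to Y$. On this model $K_Y+\Delta_f$ is big and computes $\kappa(f)$. Moreover, the multiplicities of $\Delta_f$ lie in the hyperstandard set $\{1-1/m\}\cup\{1\}$, which satisfies DCC.

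\textbf{Step 2: boundedness.} We aim to show that the pairs $(Y,\Delta_f)$, together with $f^*$ of a fixed multiple of $K_Y+\Delta_f$, are bounded. The key input is an orbifold version of the boundedness statement (Theorem~\ref{thm:hacon}), in the form of Hacon--McKernan--Xu for log general type pairs with DCC coefficients.

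To apply it we need a uniform volume bound
\[ \vol(K_Y+\Delta_f)\le C(X). \]
We plan to get this from the inclusion $f^*(m(K_Y+\Delta_f))\hookrightarrow (\Omega^{p}_{\overline{X}}(\log E))^{\otimes m}$ for $p=\dim Y$. This gives
\[ h^0\bigl(m(K_Y+\Delta_f)\bigr)\le h^0\bigl(\overline{X},\mathrm{Sym}^m\Omega^p(\log E)\otimes\cdots\bigr)=O(m^{\dim Y}) \]
after restricting to a suitable complete-intersection subvariety of $\overline{X}$ that is generically finite over $Y$. A cleaner route is to use the orbifold Bogomolov sheaf $f^*(K_Y+\Delta_f)\subset \Omega^p_{\overline{X}}(\log E)$ and bound its volume by that of a fixed ample-twisted symmetric power.

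We then use DCC and HMX to get uniform $m_0$, $\varepsilon$, and a bounded family of log canonical models $(Y_{\mathrm{can}},\Delta_{\mathrm{can}})$. Only finitely many $\Delta$ coefficients actually occur, since the DCC set and the bounded volume force the coefficients with $m$ large to be treated as $1$, or the divisors to be bounded.

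\textbf{Step 3: finiteness in the bounded family.} In the family, $f$ induces an injection
\[ H^0\bigl(Y,m_0(K_Y+\Delta_f)\bigr)\hookrightarrow H^0\bigl(\overline{X},\,(\Omega^{p}(\log E))^{\otimes m_0}\bigr), \]
a subspace of a fixed finite-dimensional space $V$. The map $f$ is recovered, up to equivalence, as the Iitaka map of this subspace. So the classes correspond to points of a Grassmannian of $V$ lying in the image of a constructible set parametrized by the bounded family.

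Finiteness then follows as in Maehara: a positive-dimensional family of inequivalent maps from $X$ to the bounded family of targets is impossible. The rigidity input is that maps of general type do not deform. Concretely, a deformation $f_t$ gives a vector field on the pair $(Y,\Delta)$ along $f$, which is forbidden because $K_Y+\Delta$ is big (a Kobayashi--Ochiai type rigidity for orbifold pairs of general type). Alternatively, one can argue via finiteness of dominant maps into a fixed log general type pair.

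\textbf{Main obstacle.} I expect Step 2 to be hardest, namely obtaining a uniform upper bound on $\vol(K_Y+\Delta_f)$ in terms of $X$ alone, since the target is not itself of general type. I would first try comparing against the Bogomolov sheaf inside $\Omega^p_{\overline{X}}(\log E)$ restricted to a movable curve family.
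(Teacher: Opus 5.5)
\textbf{The gap is in Step 3, not Step 2.} Your rigidity inputs are a section of $f^*T_Y$ ruled out by bigness of $K_Y+\Delta$, and finiteness of maps into a fixed log general type pair. Both are Kobayashi--Ochiai type statements: they control deformations of $f$ with the \emph{target held fixed}. A positive-dimensional component of your parameter space instead consists of maps $f_t\colon X\ratmap Y_t$ whose targets $(Y_t,\Delta_t)$ move in the bounded family. Nothing in your argument rules out a non-isotrivial family of targets, each dominated by $X$ through a map of general type. Indeed, a first-order deformation of $f$ together with its target is a section of $f^*T_{\mathcal{Y}}|_{Y}$, an extension of $\mathcal{O}_Y$ by $T_Y$ classified by the Kodaira--Spencer class, not a section of $f^*T_Y$; your vector-field argument does not address this.

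Excluding exactly this is the content of Maehara's rigidity theorem, and it is where the paper spends most of its effort (Theorem~\ref{thm:rigidity}). Your observation that a class is determined by $W_f=f^*\HH^0(Y,m_0(K_Y+\Delta_f))\subseteq V$ is the right starting point. What you still must prove is that for a family $X\times C\ratmap\mathcal{Y}$ over a curve, the map $t\mapsto W_{f_t}\in\Grass(V)$ is constant. The paper does this in three steps:
\begin{enumerate}
\item The orbifold base commutes with restriction to general fibers (Lemma~\ref{lemma:orbifold_base_restricted_to_fibers}), so the fibers of $q_*(\omega_{\mathcal{Y}/C}(\Delta)^{\otimes N})$ are the spaces $\HH^0(Y_t,\omega_{Y_t}(\Delta_t)^{\otimes N})$.
\item After extending the family relatively neatly over $\overline{C}$, weak positivity (Theorem~\ref{thm:campana}) makes $\overline{q}_*(\omega_{\overline{\mathcal{Y}}/\overline{C}}(\overline{\Delta})^{\otimes N})$ nef. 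Hence the kernel $\mathcal{K}\subseteq V\otimes\mathcal{O}_{\overline{C}}$ has degree $\geq 0$, and the torsion-free quotient has degree $\leq 0$.
\item Ampleness of $\det\mathcal{Q}_{\mathrm{univ}}$ would force positive degree if the map to $\Grass(V)$ were nonconstant, so it is constant.
\end{enumerate}
Without this positivity input, or a genuine substitute handling variation of the target, Step 3 does not go through.

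\textbf{Smaller points.} The volume bound you flag as the main obstacle is easy once you are equidimensional. Pullbacks of sections of $\omega_Y(\Delta_f)^{\otimes r}$ are then log pluricanonical forms on $\overline{X}$. So the graph of $X\ratmap Y\ratmap\mathbb{P}^m$ has degree at most $\vol(H+r(K_{\overline{X}}+E))$, which bounds the Hilbert scheme components.

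The paper reaches the equidimensional case by doing the hyperplane-section reduction for the \emph{whole} theorem. Given a countable sequence of pairwise inequivalent maps, a very general hyperplane section preserves dominance, inequivalence, and general type (Corollary~\ref{precomposition_stability}), and one inducts on $\dim X-\dim Y$. You only use the cut for the volume bound, so you would still need boundedness and rigidity for fibrations.

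Finally, your claim that only finitely many coefficients of $\Delta$ occur is false. The pair $(\mathbb{P}^1,\sum_{i=1}^5(1-\frac{1}{m_i})p_i)$ has volume $<3$ for all $m_i\ge 2$, yet infinitely many coefficients appear. The claim is also unnecessary, since Theorem~\ref{thm:hacon} already gives a uniform $r$ for the whole DCC set of standard coefficients.
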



Maehara's theorem complements the finiteness result of Kobayashi and Ochiai that for fixed proper varieties $X$ and $Y$ over an algebraically closed field $k$ of characteristic zero, the set of dominant rational maps $X \ratmap Y$ is finite whenever $Y$ is of general type \cite{KobaOchiai}; this result was extended to varieties of log-general type by Tsushima \cite{Tsushima}. In a similar vein, our Theorem \ref{thm:severi_orbifold} complements the orbifold version of Kobayashi--Ochiai's theorem established in \cite{BJ} (at least in characteristic zero). \smallbreak

Theorem \ref{thm:severi_orbifold} implies Campana's extension of the De Franchis theorem to one-dimensional smooth proper C-pairs of general type \cite[\S3]{CampanaMultiple}; see Section~\ref{section:cpairs} for a detailed discussion. \smallbreak

Even when $X$ is projective, Theorem \ref{thm:severi_orbifold} gives a stronger statement than Maehara’s result.
For instance, it shows that any such $X$ admits only finitely many fibrations $X \to \mathbb{P}^1$ with at least five nowhere reduced fibers (up to automorphisms of $\mathbb{P}^1$), since the orbifold base in this case is of general type. 
In the same way, a smooth projective curve $X$ admits only finitely many totally ramified morphisms to $\mathbb{P}^1$ with at least five branch points (up to automorphisms of $\mathbb{P}^1$).\smallbreak 

Theorem \ref{thm:severi_orbifold} has interesting consequences for maps to abelian varieties and K3 surfaces.
For example, it implies that, given a smooth variety $X$, there are only finitely many simple abelian varieties $A$ such that $X$ admits a dominant morphism $X\to A$ with a nowhere reduced fiber in codimension one.
In particular, since the empty scheme is nowhere reduced, the set of pairs $(A,D)$ with $A$ a simple abelian variety and $D\subseteq A$ a nontrivial effective divisor such that $X$ admits a dominant morphism to $A\setminus D$ is finite up to automorphisms of $A$.
Similarly, the set of isomorphism classes of K3 surfaces $S$ of Picard rank one such that $X$ admits a dominant rational map $X\ratmap S$ with a nowhere reduced fiber in codimension one is finite.\smallbreak

Theorem~\ref{thm:severi_orbifold} applies in particular to fibrations of general type, that is, maps of general type with geometrically connected generic fiber, and thereby answers a question of Campana concerning Bogomolov sheaves \cite[Question~3.15]{Ca11}.
To explain this question, we first recall Campana's definition of a Bogomolov sheaf; such sheaves encode fibrations of general type on a fixed pair $(X,D)$ (see \cite[\S 9.2]{Ca11}).\smallbreak

Let $X$ be a smooth proper variety and let $D$ be a simple normal crossings divisor on $X$.
Let $\Omega^1_X(\log D)$ be the sheaf of differential forms with at most logarithmic poles along $D$, and write $\Omega^p_X(\log D) = \bigwedge^p \Omega^1_X(\log D)$. 
Recall that any saturated sub-line bundle $\mathcal{L} \subseteq \Omega^p_X(\log D)$ satisfies $\kappa(\mathcal{L}) \leq p$; see \cite[\S~12, Thm.~4]{BogomolovHol}, \cite[Cor.~6.9]{EVbook}, or the more general treatment in \cite{Graf2}. 
For $1 \leq p \leq \dim X$, a \emph{Bogomolov sheaf of rank $p$ on the pair $(X,D)$} is a saturated sub-line bundle $\mathcal{L} \subseteq \Omega^p_X(\log D)$ whose Iitaka dimension satisfies $\kappa(\mathcal{L}) = p$.
(This coincides with Campana's definition as explained in \cite[Remark~2.3]{BJL}.)
A \emph{Bogomolov sheaf on $(X,D)$} is a Bogomolov sheaf of some rank $p\geq 1$ on $(X,D)$.\smallbreak 

Given a fibration of general type $X \setminus D \ratmap Y$ with $Y$ of dimension $p$, the saturation in $\Omega^p_X(\log D)$ of the pullback of $\omega_Y$ is a Bogomolov sheaf of rank $p$ on $(X,D)$.
Campana showed that this gives a one-to-one correspondence between the set of Bogomolov sheaves on $(X,D)$ and the set of equivalence classes of fibrations of general type with source $X\setminus D$; see \cite[Th\'eor\`eme~9.9]{Ca11}.\smallbreak

Motivated by Maehara's theorem for varieties of general type, Campana asked whether, for a fixed pair $(X,D)$, the set of Bogomolov sheaves on $(X,D)$ is finite; see \cite[Question~3.15]{Ca11}.
This finiteness is a direct consequence of Theorem~\ref{thm:severi_orbifold}.

\begin{alphcorollary}[Campana's question]\label{cor:campana_question}
Let $X$ be a smooth proper variety and let $D$ be a simple normal crossings divisor on $X$.
Then the set of Bogomolov sheaves for $(X,D)$ is finite.
\end{alphcorollary}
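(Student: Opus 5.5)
The plan is to deduce Corollary~\ref{cor:campana_question} from Theorem~\ref{thm:severi_orbifold} through Campana's correspondence \cite[Th\'eor\`eme~9.9]{Ca11}. Set $U := X \setminus D$, a smooth variety over $k$. Campana's theorem gives a bijection between Bogomolov sheaves on $(X,D)$ and equivalence classes of fibrations of general type with source $U$. A fibration of general type is, in particular, a dominant rational map $U \ratmap Y$ of general type in the sense of Definition~\ref{def:kodaira_dim_of_morphism}. Theorem~\ref{thm:severi_orbifold} says there are only finitely many equivalence classes of such maps. Hence there are only finitely many Bogomolov sheaves.

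In order, I would carry out the following steps.

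\textbf{Step 1: the definitions match.} Check that Campana's notion of fibration of general type agrees with ours. In Campana's setting one takes the Kodaira dimension over all birational models, including modifications of $X$ and of $Y$ and the log structure coming from $D$. Our Definition~\ref{def:kodaira_dim_of_morphism} instead takes the infimum over equivalent models of $Y$ only, and uses Lemma~\ref{orbifold_base_wd} to see that modifications of the source do not matter. One point needs care here: the orbifold base must be computed for the map on the open variety $U$. Components of $D$ that dominate a divisor of $Y$ do not enter $\Delta_f$ as multiplicities. Any divisor of $Y$ whose preimage lies entirely in $D$ gets $m=\infty$, which matches the logarithmic convention in $\Omega^p_X(\log D)$. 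I would compare the two definitions via Lemma~\ref{gen_type_criterion}, which characterizes general type through pluriforms whose pullbacks are regular on $X$ with log poles along $D$. Campana's definition is phrased in exactly these terms.

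\textbf{Step 2: the equivalence relations match.} Campana's correspondence sends a fibration to the saturation of $f^*\omega_Y$ in $\Omega^p_X(\log D)$. Equivalent fibrations, meaning ones differing by a birational map of targets, give the same saturated subsheaf. Conversely, Campana shows the fibration is recovered from the sheaf. So the map from Bogomolov sheaves to equivalence classes of general type maps $U\ratmap Y$ is injective. Since it lands in a finite set by Theorem~\ref{thm:severi_orbifold}, the set of Bogomolov sheaves is finite.

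\textbf{Main obstacle.} The real content is Theorem~\ref{thm:severi_orbifold} itself, which we may assume. At the level of this corollary, the only delicate point is Step 1: making sure the correspondence as stated in \cite{Ca11} uses a notion of general type that is not weaker than ours. If Campana allowed some infimum over modifications of $X$ that we do not, Lemma~\ref{orbifold_base_wd} and the neat-model computation of Remark~\ref{compute_kodaira_dim_on_neat_model} should identify the two numbers. If Campana's notion were instead \emph{stronger}, that would only help, since Theorem~\ref{thm:severi_orbifold} would then apply to a subset of our maps.
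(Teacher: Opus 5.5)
Your proposal is correct and follows the paper's argument: apply Theorem~\ref{thm:severi_orbifold} to $X\setminus D$, and transfer the finiteness through Campana's bijection \cite[Th\'eor\`eme~9.9]{Ca11} between Bogomolov sheaves on $(X,D)$ and equivalence classes of fibrations of general type with source $X\setminus D$. Your Step~1 check that the definitions match is a sensible precaution; the paper handles this point only by remarking that Definition~\ref{def:kodaira_dim_of_morphism} extends Campana's notion, and that modifying the source is unnecessary by Lemma~\ref{orbifold_base_wd}.
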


We emphasize that Theorem~\ref{thm:severi_orbifold} is strictly stronger than Corollary~\ref{cor:campana_question}, since it applies to all maps of general type and not only to fibrations.
The proof of Theorem~\ref{thm:severi_orbifold} begins with a finiteness result for \emph{generically finite} maps and then derives the general case, including fibrations, as a consequence; see Section~\ref{section:proof_of_main_theorem}.\smallbreak

Our version of Maehara's theorem also implies the analogous ``log-version''. To state this, recall that a smooth variety $Y$ is of \emph{log-general type} if there is a smooth proper compactification $Y\subseteq \overline{Y}$ whose boundary $D := \overline{Y}\setminus Y$ is an snc divisor such that $K_{\overline{Y}}+D$ is big (i.e., the identity $Y\to Y$ is of general type).
Moreover, recall that a rational map $X \ratmap Y$ is said to be \emph{proper-rational} if there is a proper birational morphism $X'\to X$ such that $X'\ratmap Y$ is a proper morphism (this terminology was introduced by Iitaka \cite[\S~2.12]{Iitaka}).
Moreover, two varieties $X$ and $X'$ are \emph{proper-birational} if there is a proper-rational map $X' \ratmap X$ whose inverse is also proper-rational.
Two rational maps $f\colon X\ratmap Y$ and $f'\colon X\ratmap Y'$ are \emph{proper-birationally equivalent} if there is a proper-birational map $\psi\colon Y\ratmap Y'$ such that $\psi\circ f = f'$.
The quasi-projective version of Maehara's theorem now reads as follows (see Section~\ref{section:log} for its proof). 

\begin{alphcorollary} \label{cor:log_severi}
Let $X$ be a smooth variety over an algebraically closed field $k$ of characteristic zero. 
Then the set of proper-birational equivalence classes of dominant proper-rational maps $X \ratmap Y$ with $Y$ a smooth variety of log-general type is finite.
\end{alphcorollary}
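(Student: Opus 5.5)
The plan is to deduce Corollary~\ref{cor:log_severi} from Theorem~\ref{thm:severi_orbifold}. In outline: show that a dominant proper-rational map $X \ratmap Y$ with $Y$ of log-general type is a map of general type, apply the finiteness of Theorem~\ref{thm:severi_orbifold}, and then upgrade the resulting (ordinary) birational equivalence classes to proper-birational equivalence classes.

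\emph{Step 1: $f$ is of general type.} Let $f\colon X\ratmap Y$ be dominant and proper-rational. By Lemma~\ref{orbifold_base_wd}, the Kodaira dimension is invariant under proper birational modifications of $X$, so we may replace $X$ and assume $f\colon X\to Y$ is a proper dominant morphism. Choose a smooth proper compactification $Y\subseteq\overline{Y}$ with snc boundary $D$ such that $K_{\overline Y}+D$ is big, and view $f$ as a dominant rational map $X\ratmap\overline{Y}$. For every boundary component $D_i$, the preimage of the generic point of $D_i$ under any resolution $X'\to\overline Y$ that restricts to $f$ over $Y$ is empty. The reason is that $f$ is already proper over $Y$, so no divisor of $X$ dominates $D_i$. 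Hence $m(D_i)=\infty$, and the orbifold base satisfies $\Delta_f\geq D$. Consequently $K_{\overline Y}+\Delta_f$ is big on this model.

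The infimum in Definition~\ref{def:kodaira_dim_of_morphism} runs over all smooth proper models $Y'$, so bigness on one model is not enough. I would handle this with the following argument. For any other smooth proper model $Y'$ birational to $\overline{Y}$, the same reasoning shows that every prime divisor of $Y'$ not meeting the image of $Y$ (that is, every divisor lying over the boundary) has multiplicity $\infty$. Hence $\Delta_{f'}\ge D'$, where $D'$ is the reduced boundary of $Y\subset Y'$. Since the log Kodaira dimension $\kappa(K_{Y'}+D')$ is a proper-birational invariant of $Y$ (Iitaka), we obtain $\kappa(f)=\dim Y$. Alternatively, I would appeal to Remark~\ref{compute_kodaira_dim_on_neat_model} on a neat model. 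This is the step where care is needed, specifically in using properness of $f$ to show that divisors over the boundary do not meet the image. If $Y'$ has divisors that do not lie over the boundary, $Y'$ may contain an open subset isomorphic to $Y$ only after a blow-up, so I would first arrange, via a common resolution, that $Y'$ contains a proper-birational model of $Y$.

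\emph{Step 2: Finiteness and the upgrade.} By Theorem~\ref{thm:severi_orbifold}, there are only finitely many birational equivalence classes among such maps. It remains to show that two such maps $f_1\colon X\ratmap Y_1$ and $f_2\colon X\ratmap Y_2$ that are birationally equivalent via $\psi\colon Y_1\ratmap Y_2$ are in fact proper-birationally equivalent. Take a proper birational $X'\to X$ on which both maps become proper morphisms $f_1', f_2'$, and let $Z$ be the closure of the image of $(f_1',f_2')\colon X'\to Y_1\times Y_2$. Then $Z\to Y_i$ is proper, being the image of a proper map over $Y_i$, and birational because of $\psi$. Hence $\psi$ and $\psi^{-1}$ are proper-rational (after resolving $Z$), so $\psi$ is proper-birational. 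Therefore each birational class contains at most one proper-birational class, which gives the corollary.

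The main obstacle is the model-independence part of Step 1: verifying that $\Delta_{f'}$ contains the full log boundary on every model.
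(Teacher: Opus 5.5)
Your proposal is correct and follows the paper's proof: Theorem~\ref{thm:severi_orbifold} combined with the lemma that equivalent proper-rational maps are proper-birationally equivalent, which the paper proves exactly as in your Step~2, via the image of $X'$ in $Y_1\times Y_2$. The paper treats your Step~1 as immediate, and indeed it follows at once from Corollary~\ref{precomposition_stability} applied to $\mathrm{id}_Y$ (which is of general type precisely because $Y$ is of log-general type), so the model-by-model analysis you flag as the main obstacle is unnecessary; note also that Step~1 only uses that $f$ becomes a morphism into $Y$ after a proper modification of $X$, not the properness of $f$.
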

 
Finally, in Section~\ref{section:cpairs} we explain how our methods relate to Campana’s theory of C-pairs (also referred to as ``orbifolds'') and why we formulate our results in terms of dominant rational maps of general type.
In particular, we show that the one-dimensional case of Theorem \ref{thm:severi_orbifold} recovers Campana’s extension of the De Franchis theorem and implies a C-pair version of Severi's classical finiteness result for algebraic curves.
The higher-dimensional situation is more subtle, and we include a short discussion of the issues surrounding C-pair targets for completeness.

\subsection{Conventions} 
We work over an algebraically closed field $k$ of characteristic zero.
A \emph{variety} is an integral separated scheme of finite type over $k$.
If $X$ and $Y$ are varieties, we write $X \times Y$ for $X \times_{\Spec k} Y$. 
If $\mathcal{L}$ is a line bundle, $D$ is a $\mathbb{Q}$-divisor, and $n$ is a natural number, we abuse notation and write $(\mathcal{L}(D))^{\otimes n}$ for $\mathcal{L}^{\otimes n}(\floor{nD})$. 

\begin{ack}
A.J.\ was supported by the Netherlands Organization for Scientific Research NWO, through grant OCENW.M.23.206.
A.J.\ gratefully acknowledges support from the Laboratoire de Math\'ematiques de Bretagne Atlantique (LMBA) and a CNRS Poste Rouge.
\end{ack}

\section{Preliminaries}

\subsection{Neat maps}

We follow Campana \cite[Definition~1.2]{Ca04} and make the following definition. 
(Given a morphism $f\colon X\to Y$ of varieties and a prime divisor $D$ on $X$, we say that $D$ is \emph{contracted by $f$} if $f(D)$ has codimension at least two in $Y$.)

\begin{definition} \label{def:neat}
A dominant morphism $f \colon X \to Y$ between smooth varieties is \emph{neat} if there is a proper birational morphism $p \colon X \to X'$ with $X'$ also smooth such that every prime divisor contracted by $f$ is also contracted by $p$.
Suppose that additionally, $X$ and $Y$ admit morphisms to a smooth variety $T$ and $f$ is a $T$-morphism.
Then we say that $f$ is \emph{relatively neat over $T$} if $X'$ can be chosen to admit a morphism to $T$ such that $p$ is a $T$-morphism.
\end{definition}

If $T = \Spec k$ is a point, being relatively neat is equivalent to being neat.
Since a flat morphism does not contract any divisors, any flat morphism is neat.
Also, every birational morphism of smooth varieties is neat.
Raynaud--Gruson's flattening theorem \cite{RaynaudGruson} ensures that every dominant rational map admits a neat model (see also \cite[Lemma~1.3]{Ca04}).

\begin{lemma} \label{lemma:raynaud_gruson}
Let $X$, $Y$ and $T$ be smooth varieties and let $X \to T$ and $Y \to T$ be morphisms.
Let $f \colon X \to Y$ be a dominant morphism over $T$.
Then there exist projective birational surjective morphisms $X'' \to X$ and $Y' \to Y$ with $X''$ and $Y'$ smooth such that $f$ can be lifted to a relatively neat morphism $X'' \to Y'$ over $T$ whose orbifold base is an snc divisor.
Furthermore, we can ensure that for every $t$ in $T$ such that $X_t''$ and $Y_t'$ are smooth varieties, the induced morphism $X_t'' \to Y_t'$ is neat as well.
\end{lemma}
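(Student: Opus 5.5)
We reduce the statement to two standard tools: Raynaud--Gruson flattening (equivalently, Hironaka's flattening by blow-ups) and resolution of singularities, applied in the relative setting over $T$. We then verify that the relevant properties survive a final resolution.

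\textbf{Flattening.} We may assume $T$ is irreducible, since the statement is local over connected components. By the relative version of Raynaud--Gruson (or Hironaka's flattening, in the form for $T$-schemes), there is a projective birational $T$-morphism $Y_1 \to Y$, obtained as a blow-up in a $T$-scheme centre, such that the strict transform $X_1 \to Y_1$ of $f$ is flat. Here $X_1$ is the closure of the generic fiber in $X \times_Y Y_1$, and $X_1 \to X$ is also a blow-up.

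\textbf{Resolving.} Using functorial (equivariant) embedded resolution, we choose a resolution $Y' \to Y_1$ that also makes the orbifold base divisor, together with the exceptional locus, snc; this is a $T$-morphism. The strict transform $X_2 \to Y'$ of $X_1$ is again flat, since flatness is preserved by strict transform under blow-ups of the base, by the universal property of Raynaud--Gruson flattening. We then set $X'' \to X_2$ to be a resolution of singularities.

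\textbf{Neatness.} Put $p \colon X'' \to X$ for the composition and $f'' \colon X'' \to Y'$ for the induced map. Let $E \subset X''$ be a prime divisor contracted by $f''$. If $E$ were not $p$-exceptional, its image in $X_2$ would be a divisor not contained in the exceptional locus of $X'' \to X_2$. Flat morphisms of pure relative dimension send divisors to divisors, so that image would not be contracted, a contradiction. Hence $E$ is contracted by $p \colon X'' \to X$, and $p$ is a $T$-morphism, which gives relative neatness with $X' = X$. The snc property of $\Delta_{f''}$ holds because $\Delta_{f''}$ is supported on the snc divisor we arranged on $Y'$.

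\textbf{Fibers.} This is the main obstacle. For $t$ with $X''_t, Y'_t$ smooth, a divisor of $X''_t$ contracted by $f''_t$ has to be contracted by some birational morphism to a smooth variety. We would take $X_t$ together with $p_t$. The divisors contracted by $f''_t$ lie in the fibre over $t$ of the $p$-exceptional locus, or come from the non-flat locus of $f''$ restricted to the fiber. By generic flatness over $T$ and a constructibility argument, we expect to shrink this to the statement for $t$ in a dense open subset. For the remaining $t$, one has to use that $X_2 \to Y'$ is flat, hence $(X_2)_t \to Y'_t$ is flat, and contracted divisors of $X''_t$ map into the exceptional locus of $X''_t \to (X_2)_t$. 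The hard point is knowing that $(X_2)_t$ admits a smooth birational model receiving $X''_t$ that contracts these divisors. We would try to arrange the resolution $X'' \to X_2$ so that it restricts fiberwise to resolutions; at worst we take $X_t$ when $X_t$ is smooth.
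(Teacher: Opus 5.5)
Your construction of the relatively neat model follows the paper's proof. You flatten with Raynaud--Gruson, pass to a smooth $Y'$ over the flattened base while keeping flatness, and resolve the source. You then use the dimension count for the flat map $X_2\to Y'$ to show that every divisor contracted by $X''\to Y'$ is exceptional for $p\colon X''\to X$, so the smooth $X$ itself witnesses (relative) neatness.

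One justification should be fixed. Flatness of $X_2\to Y'$ does not come from a universal property of flattening. It holds because $X_1\times_{Y_1}Y'\to Y'$ is flat by base change, so all its associated points lie over the generic point of $Y'$. There it agrees with the integral generic fiber of $X_1$, so this fiber product is already integral and equal to the strict transform $X_2$. This is what the paper means by ``formation of the main component commutes with further blowups and flatness is preserved under base change''.

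The genuine gap is the ``Furthermore'' clause, which you do not prove; you only describe what you expect and would try. The route you sketch would not give the statement in any case. Generic flatness plus constructibility yields neatness only for $t$ in some dense open of $T$, not for every $t$ with $X''_t$ and $Y'_t$ smooth. Looking for a smooth model of $(X_2)_t$, or for fiberwise resolutions, is a detour.

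The missing observation is one you almost wrote down. Since $X_2\to Y'$ is a $T$-morphism, $(X_2)_t=X_2\times_{Y'}Y'_t$, so $(X_2)_t\to Y'_t$ is flat by base change. Its fibers all have pure dimension $e=\dim X_2-\dim Y'$, because they are fibers of $X_2\to Y'$. As $(X_2)_t$ is the surjective image of the variety $X''_t$, it is irreducible of dimension $\dim Y'_t+e\le \dim X''_t$. Your global dimension count then applies verbatim to $X''_t\to (X_2)_t\to Y'_t$: a prime divisor of $X''_t$ contracted in $Y'_t$ has image of dimension at most $\dim X''_t-2$ in $(X_2)_t$, hence also in $X_t$.

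So the witness is $p_t\colon X''_t\to X_t$, just as globally you used $X$ and not a smooth model of $X_2$. This is the content of the paper's ``the same argument also applies to the morphisms $X''_t\to Y'_t$''. As you sensed, this needs $X_t$ smooth and $p_t$ birational. That is the case where the paper actually uses the clause: the source is a model of $X\times T$ and $T$ is shrunk in the proof of Theorem~\ref{thm:rigidity}.
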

\begin{proof}
By Raynaud--Gruson's flattening theorem \cite[Theorem~5.2.2]{RaynaudGruson}, there is a proper birational surjective morphism $Y' \to Y$ such that the main component $X'$ of $X \times_Y Y'$ is flat over $Y'$.
Since formation of the main component commutes with further blowups and flatness is preserved under base change, this $Y'$ can be chosen to be smooth and such that the orbifold base of $X' \to Y'$ is snc.
Now let $X''\to X'$ be a proper birational surjective morphism with $X''$ smooth.
Then every divisor contracted by the composed morphism $X'' \to X' \to Y'$ must already be contracted by $X'' \to X'$ since $X' \to Y'$ is flat.
In particular, every such divisor gets contracted by $X'' \to X$, showing that $X'' \to Y'$ is neat.
The same argument also applies to the morphisms $X_t'' \to Y_t'$ for every $t$ in $T$, and thus shows that these are neat as well.
\end{proof}

The importance of neat maps for us comes from Lemma~\ref{lemma:pullback_of_forms} below.

\subsection{The orbifold base}

The following is well-known and can be deduced from \cite[Lemme~4.4]{Ca11} after unraveling the definitions in \emph{loc.~cit.}; we include a short proof for the reader's convenience. 

\begin{lemma}\label{orbifold_base_wd}
Let $f \colon X \to Y$ be a dominant morphism of smooth varieties and let $g \colon X' \to X$ be a proper birational morphism with $X'$ smooth.
Then $\Delta_{f} = \Delta_{f \circ g}$.
\end{lemma}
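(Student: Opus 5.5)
The plan is to compare, for a fixed prime divisor $D \subset Y$, the pullbacks $f^*D$ and $(f\circ g)^*D = g^*f^*D$ and check that the minimum of the multiplicities over components dominating $D$ is unchanged. Since pullback of Cartier divisors is functorial, write
\[ f^*D = R + \sum_i a_i F_i \]
as in the definition. Then
\[ g^*f^*D = g^*R + \sum_i a_i\, g^*F_i . \]
Because $g$ is proper birational, it is an isomorphism over a dense open $U \subseteq X$ whose complement has codimension at least two. Hence each $g^*F_i$ contains the strict transform $F_i'$ with coefficient exactly $1$, plus $g$-exceptional components.

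First I show that the components of $g^*f^*D$ dominating $D$ are exactly the $F_i'$ together with possibly some $g$-exceptional divisors $E$. The latter dominate $D$ only if $g(E)$ lies in a component of $f^{-1}(D)$ mapping onto $D$. Components of $g^*R$ are strict transforms of non-dominating components, which still do not dominate $D$, or exceptional divisors lying over $\supp R$. So the strict transforms contribute exactly the multiplicities $a_i$, which gives $m_{f\circ g}(D) \le m_f(D)$. In the case where no $F_i$ exists, both sides are $\infty$ as long as no exceptional divisor dominates $D$; this is automatic, since an exceptional divisor over $D$ maps into $f^{-1}(D)$, whose dominating part is empty.

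The main step is the reverse inequality: every $g$-exceptional prime $E$ dominating $D$ appears in $g^*f^*D$ with coefficient at least $\min_i a_i$. Let $Z = g(E)$. Then $f(Z)$ is dense in $D$, so $Z \subseteq f^{-1}(D)$. Near the generic point of $Z$, the divisor $f^*D$ is therefore a sum $\sum a_i F_i$ over the dominating components containing $Z$, plus non-dominating components. Here I use that $Z$ dominates $D$, so it cannot be contained in a non-dominating component $R_j$, since $f(R_j) \subsetneq D$ is closed.

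Hence locally at the generic point of $Z$ we have $f^*D = \sum_{F_i \supseteq Z} a_i F_i$, with at least one term. Its pullback has coefficient along $E$ equal to $\sum a_i \operatorname{ord}_E(g^*F_i)$. Since $Z \subseteq F_i$ and $F_i$ is Cartier ($X$ is smooth), each such $\operatorname{ord}_E(g^*F_i) \ge 1$. The coefficient is thus at least $\min a_i \ge m_f(D)$. The same argument shows that if $f^*D$ has no dominating components, no $E$ dominates $D$, so both infima are $\infty$. Together these give $m_{f\circ g}(D) = m_f(D)$ for every $D$, hence $\Delta_f = \Delta_{f\circ g}$.

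The one point needing care is the claim that a dominating $E$ forces $Z$ to avoid non-dominating components generically. I handle it via closedness of $f(R_j)$ and the fact that $f(Z)$ is dense in $D$.
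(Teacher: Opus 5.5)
Your proof is correct and follows the same route as the paper: the strict transforms of the $F_i$ keep coefficient $a_i$, and every $g$-exceptional divisor over some $F_i$ gets coefficient at least $a_i\,\mathrm{ord}_E(g^*F_i)\geq a_i$, with $F_i$ Cartier by smoothness of $X$. You also supply two points the paper leaves implicit, namely that a dominating exceptional divisor cannot lie only over $\supp R$ and the case $m=\infty$; one small correction is that $f(R_j)$ need not be closed since $f$ is not proper, but its closure is a proper closed subset of $D$, which is all your argument needs.
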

\begin{proof}
This may be checked one prime divisor at a time, so let $D \subseteq Y$ be a prime divisor.
Write $f^*D = \sum_{i \in I} a_i F_i + R$ with the $F_i \subseteq X$ being prime divisors which dominate $D$ under $f$ and $R$ consisting of prime divisors which get contracted under $f$.
Fix one index $i \in I$ and consider the pullback $g^* F_i$.
Then $g^* F_i = \widetilde{F_i} + \sum_{j \in J_i} b_{i,j} E_{i,j}$ with the $E_{i,j}$ being exceptional divisors for $g$, the $b_{i,j}$ being positive integers, and $\widetilde{F_i}$ denoting the strict transform of $F_i$.
Thus, we see that $\widetilde{F_i}$ appears in $g^*f^*D$ with multiplicity $a_i$ and moreover, that every $E_{i,j}$ appears with multiplicity at least $a_i b_{i,j} \geq a_i$.
In particular, the minimal $a_i$ has not changed and the conclusion follows.
\end{proof}

\begin{remark}
In \cite[Lemma~1.9]{Ca04}, a similar statement to Lemma~\ref{orbifold_base_wd} is claimed in which $X$ and $Y$ are only assumed to be normal instead of smooth.
However, Lemma~\ref{orbifold_base_wd} is wrong if we do not require $X$ to be at least locally factorial -- in the proof above, this assumption is implicitly used when considering the pullback $g^* F_i$, which requires $F_i$ to be Cartier.
For a counterexample, let $X \subseteq \mathbb{A}^3$ be the cone $x^2 = yz$ and consider the morphism $f \colon X \to \mathbb{A}^1$ given by $(x,y,z) \mapsto y$.
Then the fiber over $y = 0$ is the double line $x^2 = 0$, and thus, we would expect $\Delta_f = \frac{1}{2}[0]$.
However, if we consider the blowup $g \colon X' \to X$ in the cone tip, which resolves the singularity, an easy local calculation shows that the exceptional divisor $E \subseteq X'$ gets mapped to $0$ under $f$ and appears with multiplicity $1$ in the fiber $f^{-1}(0)$.
Thus, we have $\Delta_{f \circ g} = 0$ and the analogue of Lemma~\ref{orbifold_base_wd} fails to hold.
It is for this reason that we only consider the orbifold base for morphisms between smooth varieties in what follows. 
\end{remark}

\subsection{Pullback of differential forms} \label{sect:diff_forms}

Let $X$ be a variety over $k$, let $p \colon X' \to X$ be a resolution of singularities, and let $\overline{X'}$ be a compactification of $X'$ such that the boundary $\overline{X'} \setminus X'$ is a simple normal crossings divisor $D$.
In this situation, for any integer $d \leq \dim(X)$ we can consider the sheaf $\Omega^d_{\overline{X'}}(\log D)$ and its tensor powers $\Omega^d_{\overline{X'}}(\log D)^{\otimes r}$.
A key property of these sheaves is that their space of global sections $\HH^0(\overline{X'}, \Omega^d_{\overline{X'}}(\log D)^{\otimes r})$ only depends on $X$, i.e.\ it is independent of the choice of resolution and compactification \cite[§11.1]{Iitaka}.
We call this space the space of \emph{differential forms on $X$ with at most log-poles at infinity}.
If $f \colon X \to Y$ is a dominant morphism of varieties, it is easy to see that pullback of differentials preserves the property of having at most log-poles at infinity.
In fact, we have the following more general result, which also partially explains the relevance of the orbifold base in our setting.

\begin{lemma} \label{lemma:pullback_of_forms2}
Let $X$ and $Y$ be smooth varieties and let $D \subseteq X$ be an snc divisor.
Let $f \colon X \to Y$ be a dominant morphism such that the induced morphism $X \setminus D \to Y$ is neat and let $\Delta_{f,D}$ be the orbifold base of $X \setminus D \to Y$.
Assume that $\Delta_{f,D}$ is supported on an snc divisor.
Then, pullback of differential forms yields an injection $\HH^0(\omega_Y(\Delta_{f,D})^{\otimes r}) \to \HH^0(\Omega^d_X(\log D)^{\otimes r})$, with $d := \dim Y$.
\end{lemma}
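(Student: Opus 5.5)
The plan is to reduce the statement to a local computation at codimension-one points of $X$, and then use Hartogs extension for the locally free sheaf $\Omega^d_X(\log D)^{\otimes r}$ on the smooth variety $X$.

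\emph{Step 1: the generic situation.} First I would check that pullback is well defined and injective at the generic level. Since $f$ is dominant and we are in characteristic zero, $f$ is generically smooth. Hence $f^*$ sends a nonzero rational section of $\omega_Y^{\otimes r}$ to a nonzero rational section of $(\Omega^d_X)^{\otimes r}\subseteq \Omega^d_X(\log D)^{\otimes r}$. So we get an injection of spaces of rational sections. It then suffices to show: if $\sigma\in \HH^0(\omega_Y(\Delta_{f,D})^{\otimes r})$, that is, a rational $r$-fold $d$-form with poles bounded by $\floor{r\Delta_{f,D}}$, then $f^*\sigma$ is regular as a section of $\Omega^d_X(\log D)^{\otimes r}$. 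Because this sheaf is locally free on the smooth (hence normal) $X$, regularity only needs to be checked at the generic point of each prime divisor $E\subseteq X$.

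\emph{Step 2: case analysis on a prime divisor $E$.} Let $E\subseteq X$ be a prime divisor.

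(a) If $f(E)$ is not contained in the support of $\Delta_{f,D}$, then $\sigma$ is regular near the generic point of $f(E)$, and $f^*\sigma$ is regular along $E$.

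(b) If $E$ is a component of $D$, then $f(E)$ may lie in a polar divisor $B$ of $\sigma$. Where $B$ has coefficient $1-1/m$ with $m=\infty$, $\sigma$ has at most log poles of order $r$. Pullback of a form with log poles along the snc divisor $\operatorname{supp}\Delta_{f,D}$ has log poles along the preimage, so $f^*\sigma$ has at most log poles along $E$. This is the place where the snc hypothesis on the support of $\Delta_{f,D}$ is used: locally $\sigma$ is a product of $(dy_i/y_i)$'s and regular forms.

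(c) If $E\not\subseteq D$ and $f(E)=B$ is a prime divisor dominated by $E$, write $f^*B = aE+\dots$ with $a\ge m:=m_{f|_{X\setminus D}}(B)$. In local coordinates with $y_1$ defining $B$, we have $y_1=u x_1^a$ along $E$. Then $dy_1/y_1$ pulls back to something with a simple pole, but $dy_1=f^*(dy_1)$ vanishes to order $a-1$. Since $\floor{r(1-1/m)}\le r(1-1/a)$, the section $\sigma$ has pole order at most $r(1-1/a)$ along $B$. Writing $\sigma = y_1^{-c}(dy_1\wedge\cdots)^{\otimes r}\cdot h$ with $c\le r(1-1/a)$, the pullback has order along $E$ at least $-ac + r(a-1)\ge 0$. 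The snc hypothesis again lets us write $\sigma$ locally in such a monomial frame when several components meet.

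(d) If $E\not\subseteq D$ and $f(E)$ has codimension $\ge 2$, then $E$ is a divisor contracted by $X\setminus D\to Y$. By neatness there is a proper birational $p\colon X\setminus D\to X'$ with $X'$ smooth contracting $E$.

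\emph{Step 3: the main obstacle, case (d).} Here I would argue as follows. On $X'$, away from the codimension $\ge2$ image of the $p$-exceptional divisors, the rational map $X'\dashrightarrow Y$ is a morphism near every divisor. So by cases (a) and (c) applied on $X'$, the form $f'^*\sigma$ is regular in codimension one on $X'$, hence regular on $X'$ by Hartogs. Here $f'$ is defined on a big open set of $X'$ and is compatible via $p$; one may need that $\Delta$ is unchanged, as in Lemma~\ref{orbifold_base_wd}. Then $f^*\sigma = p^*(f'^*\sigma)$ is the pullback of a regular $d$-form power under a morphism of smooth varieties, hence regular along $E$.

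Combining all cases with Hartogs on $X$ gives the lemma. The delicate points are the precise bookkeeping of (d), namely making sense of $f'$ only on a big open subset of $X'$, and the local monomial computation in (c) at points where several components of $\Delta$ meet.
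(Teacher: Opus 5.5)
Your proposal is correct and is essentially the paper's argument: check regularity at generic points of prime divisors, get log poles along $D$ from the snc support and coefficients $\le 1$ of $\Delta_{f,D}$, do the local computation $-ac+r(a-1)\ge 0$ along non-contracted divisors, and handle contracted divisors by viewing $f^*\sigma$ as a rational form on the neat model $X'$, applying Hartogs there, and pulling back along $p$.

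One point should be stated explicitly: neatness (every $f$-contracted divisor is $p$-contracted) is exactly what makes every prime divisor of $X'$ the strict transform of a non-$f$-contracted divisor of $X\setminus D$, so cases (a) and (c) apply directly to those strict transforms with the bound $a\ge m_f(B)$. Also, no appeal to Lemma~\ref{orbifold_base_wd} is needed, and the snc hypothesis plays no role in (c), since there you work at the generic point of $B$.
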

\begin{proof}
(This is similar to \cite[Corollary~1.24]{Ca04}.)

Let $\omega \in \HH^0(\omega_Y(\Delta_{f,D})^{\otimes r})$ be a pluridifferential form.
First, note that since the coefficients of $\Delta_{f,D}$ are all at most $1$ and $\Delta_{f,D}$ has snc support, the form $\omega$ has at most a logarithmic pole along any divisor in $Y$.
Thus, the pullback $f^*\omega$ has at most logarithmic poles.
Consequently, it suffices to check that $f^*\omega$ is a regular form on $X \setminus D$ so that from now on, we may assume $D = 0$.

To verify that $f^*\omega$ is regular, let $E \subseteq X$ be any divisor.
Assume first that $f$ does not contract $E$ and hence maps it to a divisor $f_*E$ in $Y$.
In this case, let $m_E$ be the coefficient of $E$ in $f^*f_*E$ and note that $m_E$ is at least the multiplicity $m$ of $f_*E$ in $\Delta_{f,D}$.
An easy local computation around the generic point of $E$ now shows that $f^*\omega$ does not have a pole along $E$.
Consequently, $f^*\omega$ cannot have a pole along any divisor not contracted by $f$.
Since $f$ is neat, there is a proper birational morphism $p \colon X \to X'$ such that every divisor contracted by $f$ is also contracted by $p$.
That is, every divisor on $X'$ corresponds (via the strict transform) to a divisor on $X$ not contracted by $f$.
In particular, viewing $f^*\omega$ as a differential form on $X'$, we see that it does not have any poles in codimension one, and thus, by smoothness of $X'$, must be regular.
It follows that $f^*\omega$ is a regular differential form on $X$ as well.
\end{proof}

We note that in the above proof, we only used that $\Delta_{f,D}$ has snc support when verifying that $f^*\omega$ has at most logarithmic poles.
In the special case $D = 0$, the assumption on the support of $\Delta_{f,D}$ can thus be dropped.

The special case of Lemma~\ref{lemma:pullback_of_forms2} where $X$ and $Y$ have the same dimension will be particularly important to us, so we record it separately.

\begin{lemma} \label{lemma:pullback_of_forms}
Let $X$ and $Y$ be smooth varieties of the same dimension and let $D \subseteq X$ be an snc divisor.
Let $f \colon X \to Y$ be a dominant morphism such that the induced morphism $X \setminus D \to Y$ is neat and let $\Delta_{f,D}$ be the orbifold base of $X \setminus D \to Y$.
Assume that $\Delta_{f,D}$ is supported on an snc divisor.
Then, pullback of differential forms yields an injection $\HH^0(\omega_Y(\Delta_{f,D})^{\otimes r}) \to \HH^0(\omega_X(D)^{\otimes r})$. 
\end{lemma}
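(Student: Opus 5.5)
This is the special case $\dim X = \dim Y$ of Lemma~\ref{lemma:pullback_of_forms2}, so I plan to deduce it directly from that lemma. Set $d := \dim Y = \dim X$. Lemma~\ref{lemma:pullback_of_forms2} applies verbatim under the present hypotheses: $f$ is dominant, $X \setminus D \to Y$ is neat, and $\Delta_{f,D}$ has snc support. It gives that pullback is an injection
\[ \HH^0(\omega_Y(\Delta_{f,D})^{\otimes r}) \to \HH^0(\Omega^d_X(\log D)^{\otimes r}). \]

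The only remaining step is to identify the target sheaf. Since $d = \dim X$, the sheaf $\Omega^d_X(\log D)$ is the top exterior power of the locally free sheaf $\Omega^1_X(\log D)$. Locally, $D$ is given by $x_1\cdots x_s = 0$ in coordinates $x_1,\dots,x_n$, and $\Omega^1_X(\log D)$ is free on
\[ dx_1/x_1,\ \dots,\ dx_s/x_s,\ dx_{s+1},\ \dots,\ dx_n. \]
Hence $\Omega^n_X(\log D)$ is generated by $(x_1\cdots x_s)^{-1}\, dx_1\wedge\cdots\wedge dx_n$. This gives the standard identification $\Omega^d_X(\log D) = \omega_X(D)$, and taking tensor powers yields $\Omega^d_X(\log D)^{\otimes r} = \omega_X(D)^{\otimes r}$, since $D$ is integral, so no rounding occurs.

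Composing this identification with the injection above gives the claim. The map is still pullback of forms, viewed inside rational pluricanonical forms on $X$. Injectivity is automatic in any case, because $f$ is dominant and generically étale in characteristic zero, so $f^*$ is injective on rational forms. I do not expect any real obstacle; the only point needing care is the local identification of the top log-forms with $\omega_X(D)$.
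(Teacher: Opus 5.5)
Your proposal is correct and matches the paper, which records this lemma without separate proof as the equidimensional special case of Lemma~\ref{lemma:pullback_of_forms2}. It relies on the identification $\Omega^{\dim X}_X(\log D)^{\otimes r} \cong \omega_X(D)^{\otimes r}$, which you check correctly in local coordinates.
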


We will also need the following relative version of this statement.
(If $f \colon X \to S$ is a dominant morphism of smooth varieties over $k$, we denote $\omega_{X/S} := \omega_X \otimes f^* \omega_S^{\vee}$.)

\begin{lemma} \label{lemma:pullback_of_forms_relative}
Let $T$ be a smooth variety over $k$.
Let $p \colon \mathcal{X} \to T$ and $q \colon \mathcal{Y} \to T$ be dominant morphisms such that $\mathcal{X}$ and $\mathcal{Y}$ are smooth varieties of the same dimension.
Let $F \colon \mathcal{X} \to \mathcal{Y}$ be a dominant morphism over $T$.
Let $D \subseteq \mathcal{X}$ be an snc divisor and let $\Delta_{F,D}$ be the orbifold base of the induced morphism of varieties $\mathcal{X} \setminus D \to \mathcal{Y}$.
Assume that $\mathcal{X} \setminus D \to \mathcal{Y}$ is relatively neat over $T$ and that $\Delta_{F,D}$ is supported on an snc divisor.
Then, pullback of differentials induces an injective sheaf morphism $q_* (\omega_{\mathcal{Y}/T}(\Delta_{F,D})^{\otimes r}) \to p_*(\omega_{\mathcal{X}/T}(D)^{\otimes r})$.
\end{lemma}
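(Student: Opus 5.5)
The plan is to reduce to the absolute statement, Lemma~\ref{lemma:pullback_of_forms}, by working locally on $T$. The claim is about a morphism of sheaves on $T$. So it suffices to construct, for every open $V \subseteq T$, an injective map
\[ \HH^0(q^{-1}V, \omega_{\mathcal{Y}/T}(\Delta_{F,D})^{\otimes r}) \to \HH^0(p^{-1}V, \omega_{\mathcal{X}/T}(D)^{\otimes r}) \]
compatible with restrictions. Since both sheaves are quasi-coherent and the assertion is local on $T$, we may shrink $T$ to an affine open on which $\omega_T$ is trivial, fixing a generator $\sigma$ of $\omega_T^{\otimes r}$.

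On such a $V$ I will twist by $\sigma$. This identifies $\omega_{\mathcal{Y}/T}^{\otimes r}$ with $\omega_{\mathcal{Y}}^{\otimes r}$ on $q^{-1}V$ via $\eta \mapsto \eta \otimes q^*\sigma$, and similarly for $\mathcal{X}$ via $p^*\sigma$. Because $F$ is a $T$-morphism, $p^*\sigma = F^* q^*\sigma$. Hence the relative pullback of $\eta$ corresponds exactly to the absolute pullback of $\eta\otimes q^*\sigma$ as a pluricanonical form on $\mathcal{X}$. Concretely, "pullback of relative differentials" is defined by the canonical isomorphism $F^*\omega_{\mathcal{Y}/T}\to\omega_{\mathcal{X}/T}$ at the generic point, since $\dim\mathcal{X}=\dim\mathcal{Y}$. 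Everything is then a rational section, and the only question is regularity plus the allowed poles along $D$. These conditions are unaffected by multiplying by the nowhere-vanishing $\sigma$. The twists by $\Delta_{F,D}$ and $D$ are compatible with this identification because $\sigma$ is a unit.

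Next I apply Lemma~\ref{lemma:pullback_of_forms} to the restricted morphism $p^{-1}V \to q^{-1}V$, with the snc divisor $D\cap p^{-1}V$. Two hypotheses need checking. First, the orbifold base of $p^{-1}V\setminus D \to q^{-1}V$ must be the restriction of $\Delta_{F,D}$, hence supported on an snc divisor. This holds since inf-multiplicities are computed at generic points of divisors and are local on the target. Second, the restricted morphism must be neat. I will take the $T$-morphism $\pi\colon \mathcal{X}\setminus D \to X'$ witnessing relative neatness and restrict it over $V$, using that $\pi$ commutes with the maps to $T$. Then $\pi^{-1}(X'_V) = p^{-1}V\setminus D$, and a divisor contracted by $F$ over $V$ is the restriction of one contracted globally, hence contracted by $\pi$. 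This is where relative (rather than absolute) neatness is used: it guarantees that the birational model restricts over opens of $T$. The lemma then gives regularity of $F^*(\eta\otimes q^*\sigma)$ with log poles along $D$; dividing by $p^*\sigma$ yields the desired section.

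Injectivity is immediate because $F$ is dominant, so pulling back nonzero rational forms gives nonzero forms. Compatibility with restriction holds because the construction is the generic-point pullback, independent of $\sigma$, which glues across the affine cover. I expect the main obstacle to be the bookkeeping in the neatness-restriction step, making sure contracted divisors over $V$ behave correctly. That step is handled by $\pi$ being a $T$-morphism, so it should be routine.
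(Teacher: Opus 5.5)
Your proposal is correct and follows essentially the paper's argument. Both apply Lemma~\ref{lemma:pullback_of_forms} over opens of $T$, with relative neatness and the locality of the orbifold base ensuring the hypotheses persist, and then pass from absolute to relative canonical sheaves. The only difference is in that last step: the paper tensors with $\omega_T^{\otimes -r}$ and uses the projection formula, while you trivialize $\omega_T$ locally, which amounts to the same thing.
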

\begin{proof}
By Lemma~\ref{lemma:pullback_of_forms}, pullback of differentials gives an injective map
\[ \HH^0(\omega_{\mathcal{Y}}(\Delta_{F,D})^{\otimes r}) \to \HH^0(\omega_{\mathcal{X}}(D)^{\otimes r}). \]
Observe that if $U \subseteq T$ is a dense open, then the restricted morphism $(\mathcal{X} \setminus D)|_U \to \mathcal{Y}|_U$ is relatively neat over $U$, with orbifold base $(\Delta_{F,D})|_U$.
Thus, the assumptions of the lemma are preserved under replacing $T$ with a dense open and $\mathcal{X}$ and $\mathcal{Y}$ with their preimages under $F$.
Consequently, the maps on $\HH^0$ above yield an injective morphism of sheaves
\[ q_*(\omega_{\mathcal{Y}}(\Delta_{F,D})^{\otimes r}) \to p_*(\omega_{\mathcal{X}}(D)^{\otimes r}). \]
Tensoring with $\omega_T^{\otimes -r}$ on both sides, we obtain an injective morphism of sheaves
\[ q_*(\omega_{\mathcal{Y}}(\Delta_{F,D})^{\otimes r}) \otimes \omega_T^{\otimes -r} \to p_* (\omega_{\mathcal{X}}(D)^{\otimes r}) \otimes \omega_T^{\otimes -r}. \]
The sheaf $\omega_T$ is a line bundle and hence, the projection formula applies.
By definition, $\omega_{\mathcal{X}/T} = \omega_{\mathcal{X}} \otimes p^*\omega_T^{\otimes -1}$ and similarly for $\omega_{\mathcal{Y}/T}$; thus, we obtain an injective morphism of sheaves
\[ q_* (\omega_{\mathcal{Y}/T}(\Delta_{F,D})^{\otimes r}) \to p_*(\omega_{\mathcal{X}/T}(D)^{\otimes r}). \]
This is the claim.
\end{proof}

\subsection{Characterizing maps of general type}

In this section, we give a ``model-independent'' criterion for a surjective morphism $f \colon X \to Y$ to be of general type, i.e.\ a criterion which does not require modifying $X$ and $Y$ in its birational equivalence class.
Our starting point is the observation that Lemma~\ref{lemma:pullback_of_forms} has the following converse, even in the absence of any neatness conditions.

\begin{lemma} \label{lemma:pullback_of_forms_converse}
Let $f \colon X \to Y$ be a surjective morphism of smooth varieties and let $D \subseteq X$ be an snc divisor.
Let $r$ be an integer and let $\omega$ be a rational section of $\omega_Y^{\otimes r}$ (i.e.\ a pluridifferential $d$-form on $Y$ which may have poles).
Suppose that $f^* \omega$ is a regular pluridifferential $d$-form on $X \setminus D$ which has at most log-poles along $D$.
Then $\omega$ is a regular section of $\omega_Y(\Delta_{f,D})^{\otimes r}$, where $\Delta_{f,D}$ denotes the orbifold base of $X \setminus D \to Y$.
\end{lemma}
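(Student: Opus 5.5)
Since $Y$ is smooth, hence normal, and $\omega_Y(\Delta_{f,D})^{\otimes r} = \omega_Y^{\otimes r}(\floor{r\Delta_{f,D}})$ is a line bundle, regularity can be checked in codimension one. So the plan is to fix an arbitrary prime divisor $P \subseteq Y$ and show that $\operatorname{ord}_P(\omega) \geq -\floor{r(1-1/m)}$, where $m = m_{f|_{X\setminus D}}(P)$. Write $\operatorname{ord}_P(\omega) = -c$; if $c \leq 0$ there is nothing to prove, so assume $c > 0$. Since $f$ is surjective, the preimage $f^{-1}(P)$ is a nonempty divisor, and there is at least one component of $f^*P$ dominating $P$.

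\textbf{Case $m < \infty$.} Choose a prime divisor $F \subseteq X \setminus D$ dominating $P$ with multiplicity exactly $a = m$ in $f^*P$. We work at the generic point of $F$ using local coordinates: near the generic point of $P$ choose coordinates $y_1,\dots,y_d$ on $Y$ with $P = \{y_1 = 0\}$, so $\omega = y_1^{-c} u\,(dy_1\wedge\dots\wedge dy_d)^{\otimes r}$ with $u$ a unit there. Near the generic point of $F$ (shrinking so that $f|_F$ is smooth onto its image generically, which is possible in characteristic zero since $F \to P$ is dominant) we can choose coordinates with $f^*y_1 = x_1^{a}\cdot(\text{unit})$ and $f^*y_2,\dots,f^*y_d$ restricting to part of a coordinate system on $F$. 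Then $f^*(dy_1\wedge\dots\wedge dy_d)$ vanishes to order exactly $a-1$ along $F$, at least for the component of $f^*(dy_1\wedge \cdots \wedge dy_d)$ involving $dx_1 \wedge$(transverse coordinates); the key point is that the relevant $d$-form $f^*(dy_1\wedge\dots\wedge dy_d)$ is nonzero with vanishing order exactly $a-1$ along $F$ (using generic smoothness of $F\to P$ to see the leading coefficient is a nonzero function on $F$). Hence $f^*\omega$ has order $-ca + r(a-1)$ along $F$. Regularity on $X\setminus D$ forces $ca \leq r(a-1)$, i.e.\ $c \leq r(1-1/m)$, and since $c$ is an integer, $c \leq \floor{r(1-1/m)}$.

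\textbf{Case $m = \infty$.} Here every component dominating $P$ lies in $D$, and such a component $F$ has some multiplicity $a\ge1$. The same computation, now allowing a log pole, writes $f^*\omega$ near $F$ in terms of $(dx_1/x_1)^{\otimes r}$: $f^*(dy_1/y_1) = a\,dx_1/x_1 + (\text{regular})$, so $f^*\omega = (f^*y_1)^{r-c}\cdot(\text{unit})\cdot(\text{log form nonvanishing along } F)$, and having at most log poles forces $c \leq r = \floor{r(1-1/\infty)}$.

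The main obstacle is making the local computation rigorous: showing the pulled-back $d$-form has vanishing order \emph{exactly} $a-1$ (resp.\ is a nonvanishing log form) along $F$, rather than merely at least that. I would handle it by restricting to a general point of $F$ where $F$ is smooth, $f|_F\colon F\to P$ is smooth of relative dimension $\dim X - d$ (generic smoothness), and choosing $x_2,\dots,x_d$ among the pullbacks of $y_2,\dots,y_d$; then the coefficient of $dx_1\wedge\dots\wedge dx_d$ is $a x_1^{a-1}\cdot(\text{unit})$, which gives the exact order.
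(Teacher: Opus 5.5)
Your proof is correct and follows the same route as the paper. Both arguments do a local computation at the generic point of a dominating component of minimal multiplicity $m$ (or at a component of $D$ dominating $P$ when $m=\infty$), which gives $c\le r(1-\frac{1}{m})$, respectively $c\le r$. The paper works in completed local rings and takes for granted that the pulled-back $d$-form vanishes to order exactly $m-1$; you make that step explicit through generic smoothness of $F\to P$, and you keep the unit in $f^*y_1=x_1^a\cdot w$ rather than normalizing it away.
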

\begin{proof}
Let $E \subseteq Y$ be a prime divisor along which $\omega$ has a pole of order $n \geq 1$ and let $m$ be the multiplicity of $E$ in $\Delta_f$.
We have to show that $n \leq r(1-\frac{1}{m})$.
If $m \neq \infty$, let $E' \subseteq X$ be a non-$f$-exceptional divisor occuring in $f^*E$ with coefficient $m$, such that $E' \nsubseteq \supp D$.
Otherwise, let $E' \subseteq X$ be a component of $D$ mapping to $E$.
To compute the pole order of $f^*\omega$ along $E'$, we may pass to the local ring $\mathcal{O}_{X,E'}$ and even the completed local ring $\widehat{\mathcal{O}}_{X,E'}$.
Thus, consider the induced morphism of complete discrete valuation rings $\widehat{\mathcal{O}}_{Y,E} \to \widehat{\mathcal{O}}_{X,E'}$ and let $t_E$, $t_{E'}$ be the respective uniformizers, chosen such that the image of $t_E$ is $t_{E'}^{m'}$ for some $m' \geq 1$.
Of course, if $m \neq \infty$, we have $m' = m$.
The form $\omega$ can be written as
\[ \omega = t_E^{-n} g (dt_E \wedge dx_1 \wedge \ldots \wedge dx_{d-1})^{\otimes r}, \]
where $g,x_1,\ldots,x_{d-1} \in \widehat{\mathcal{O}}_{Y,E}$ are units.
Computing the pullback $f^* \omega$ then yields
\[ f^* \omega = t_{E'}^{-nm'+(m'-1)r} g (dt_{E'} \wedge dx_1 \wedge \ldots \wedge dx_{d-1})^{\otimes r}. \]
If $m < \infty$, our assumption was that $f^*\omega$ is regular.
Thus, we have $-nm'+(m'-1)r \geq 0$, which immediately implies the desired $n \leq r(1-\frac{1}{m})$.
Otherwise, the assumption that $f^* \omega$ has at most log-poles implies $-nm' + (m'-1)r \geq -r$, which gives $n \leq r$, as required.
\end{proof}

As a consequence, we obtain the following result which relates the global sections of $\omega_Y(\Delta_f)^{\otimes r}$ to a set which only depends on the equivalence class of $f$.
 
\begin{proposition} \label{orbifold_base_vs_differentials_on_base}
Let $f \colon X \to Y$ be a surjective morphism of smooth varieties and let $D \subseteq X$ be an snc divisor.
Write $d = \dim Y$ and let $T_r$ be the space of $r$-pluridifferential $d$-forms on $Y$ (possibly with poles, i.e.\ rational sections of $\omega_Y^{\otimes r}$) which, when pulled back along $f$, become regular forms on $X \setminus D$ with at most log-poles along $D$.
Let $\Delta_{f,D}$ be the orbifold base of $X \setminus D \to Y$.
Then the following hold.
\begin{enumerate}
\item The space $T_r$ is invariant under replacing $X$ by a smooth proper modification $X'$ for which the preimage $D'$ of $D$ in $X'$ is snc.
\item The space $T_r$ is invariant under replacing $Y$ by a smooth proper modification $Y'$ and $X$ by a resolution $X'$ of the main component of $X \times_{Y} Y'$ for which the preimage of $D'$ of $D$ in $X'$ is snc.
\item For any $f$, we have $T_r \subseteq \HH^0(Y, \omega_Y(\Delta_{f,D})^{\otimes r})$.
\item If $X \setminus D \to Y$ is neat and $\Delta_{f,D}$ is supported on an snc divisor, equality holds in (iii).
\end{enumerate}
\end{proposition}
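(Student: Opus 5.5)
The four items will be handled in the order (iii), (iv), (i), (ii), since the first two are immediate from results already proven and the last two reduce to the birational invariance of forms with log-poles recalled in Section~\ref{sect:diff_forms}.

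\emph{Items (iii) and (iv).} Item (iii) is a restatement of Lemma~\ref{lemma:pullback_of_forms_converse}. Any $\omega \in T_r$ is a rational section of $\omega_Y^{\otimes r}$ whose pullback is regular on $X\setminus D$ with at most log-poles along $D$. The lemma then says that $\omega$ is a regular section of $\omega_Y(\Delta_{f,D})^{\otimes r}$.

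For (iv), the reverse inclusion is Lemma~\ref{lemma:pullback_of_forms2}, applied with the neatness and snc-support hypotheses. It shows that for $\omega \in \HH^0(\omega_Y(\Delta_{f,D})^{\otimes r})$ the pullback $f^*\omega$ is a global section of $\Omega^d_X(\log D)^{\otimes r}$. Only the weaker statement "regular on $X\setminus D$ with log-poles along $D$" is needed, not properness of $X$, so the lemma applies as stated.

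\emph{Item (i).} Let $g\colon X'\to X$ be a proper birational morphism with $X'$ smooth and $D'=g^{-1}(D)$ snc. For a rational $d$-pluriform $\eta$ on $X$, I claim that $\eta$ is regular on $X\setminus D$ with at most log-poles along $D$ if and only if $g^*\eta$ has the same property with respect to $D'$. This gives $T_r(X)=T_r(X')$, since $(f\circ g)^*\omega=g^*f^*\omega$.

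The direction "$\Rightarrow$" is the functoriality of log forms: pullback preserves regularity and log-poles because $g^{-1}(D)\subseteq D'$. For "$\Leftarrow$", $g$ is an isomorphism outside a closed set of codimension $\geq 2$ in $X$. So $\eta$ is regular with log-poles along $D$ away from codimension two, and reflexivity of $\Omega^d_X(\log D)^{\otimes r}$ (a locally free sheaf on a smooth variety) extends this to all of $X$.

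\emph{Item (ii).} Let $Y'\to Y$ be a proper birational morphism with $Y'$ smooth, and let $X'$ be a resolution of the main component of $X\times_Y Y'$ with $D'$ snc. There is a proper birational $h\colon X'\to X$ with $f\circ h=\pi\circ f'$, where $\pi\colon Y'\to Y$ and $f'\colon X'\to Y'$. Rational sections of $\omega_Y^{\otimes r}$ and of $\omega_{Y'}^{\otimes r}$ correspond bijectively via $\pi^*$, since the two varieties have the same function field. Under this correspondence $f'^*\pi^*\omega=h^*f^*\omega$. By the equivalence established in (i), applied to $h$, the condition defining $T_r$ on the two sides is the same.

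\emph{Main obstacle.} The main care is in (i) and (ii) when $X$ is not proper. The equivalence must be purely local on $X$ (log-poles along $D$, regularity elsewhere), and one must check that $D'=h^{-1}(D)$ rather than something larger. I expect the codimension-two extension step to be the only point needing attention, namely that $h$ is an isomorphism over a big open subset of $X$; everything else is formal.
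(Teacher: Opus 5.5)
Your proof is correct and follows the paper's route. Items (iii) and (iv) come from Lemma~\ref{lemma:pullback_of_forms_converse} and Lemma~\ref{lemma:pullback_of_forms2}, and items (i) and (ii) come from the birational invariance of log-pluriforms together with the identification of rational forms on $Y$ and $Y'$. The only difference is that you prove that invariance directly ($g$ is an isomorphism off codimension two, and sections of the locally free sheaf $\Omega^d_X(\log D)^{\otimes r}$ extend), whereas the paper cites it from Section~\ref{sect:diff_forms}.
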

\begin{proof}
Point (i) is just a reformulation of the fact that the global sections of $\Omega^d_X(\log D)^{\otimes r}$ and $\Omega^d_{X'}(\log D')^{\otimes r}$ are the same.
This, combined with the fact that the rational sections of $\omega_Y^{\otimes r}$ and $\omega_{Y'}^{\otimes r}$ are the same, then also implies (ii).
Point (iii) follows immediately from Lemma~\ref{lemma:pullback_of_forms_converse}.
Lastly, (iv) is a consequence of Lemma~\ref{lemma:pullback_of_forms2}, combined with (iii).
\end{proof}

As a direct consequence, we obtain the following alternative characterization of the Kodaira dimension of a morphism (Definition~\ref{def:kodaira_dim_of_morphism}).
Note the lack of any neatness assumptions on $f$.

\begin{corollary} \label{gen_type_criterion}
In the situation of Proposition~\ref{orbifold_base_vs_differentials_on_base}, the morphism $X \setminus D \to Y$ has Kodaira dimension $\kappa$ if and only if $r \mapsto \dim T_r$ grows like a polynomial of degree $\kappa$ in $r$ as $r \to \infty$.
\end{corollary}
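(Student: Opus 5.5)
The plan is to reduce the statement to Proposition~\ref{orbifold_base_vs_differentials_on_base}. This requires making sense of the Kodaira dimension of the morphism $X\setminus D\to Y$ in the sense of Definition~\ref{def:kodaira_dim_of_morphism}. We may assume $X$ is proper: since $X$ is smooth with snc divisor $D$, the source is $X\setminus D$, and by Lemma~\ref{orbifold_base_wd} together with part (i) of the Proposition nothing changes if $X$ is modified. The definition takes an infimum of $\kappa(Y',K_{Y'}+\Delta_{f'})$ over all smooth proper models $f'\colon X\setminus D\ratmap Y'$ equivalent to $f$. Each such model can be realized, after resolving indeterminacy and modifying $X$ (harmless by (i)), as a surjective morphism from a smooth pair $(X',D')$ to $Y'$. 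Using (ii) we can also pass to common models of $Y$. It follows that $T_r$ is an invariant of the equivalence class of $X\setminus D\to Y$; write $T_r(f)$ for it.

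Next we prove two inequalities between $\kappa(f)$ and the growth degree of $r\mapsto\dim T_r(f)$.

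\emph{Lower bound.} For any model $f'$ in the infimum, (iii) gives $T_r(f)\subseteq \HH^0(Y',\omega_{Y'}(\Delta_{f'})^{\otimes r})$. By our convention this space is $\HH^0(Y',\mathcal{O}(r K_{Y'}+\floor{r\Delta_{f'}}))$. Hence the growth degree of $\dim T_r$ is at most $\kappa(Y',K_{Y'}+\Delta_{f'})$, and taking the infimum shows the growth degree is at most $\kappa(f)$.

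\emph{Upper bound.} Lemma~\ref{lemma:raynaud_gruson} with $T=\Spec k$, applied to a compactified model, produces a model $X''\to Y'$ such that $X''\setminus D''\to Y'$ is neat and $\Delta_{f,D}$ is snc. One point needs checking here: we need neatness of the restriction to the open set $X''\setminus D''$, not of the whole morphism. The flattening argument in that lemma applies equally to the open subvariety, because divisors of $X''\setminus D''$ contracted to $Y'$ are contracted by the map to the flat main component. By (iv), $T_r(f)=\HH^0(Y',\omega_{Y'}(\Delta)^{\otimes r})$ on this model, so $\kappa(f)\le \kappa(Y',K_{Y'}+\Delta)$ equals the growth degree of $\dim T_r$.

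Combining the two bounds, $\kappa(f)$ equals the growth degree of $\dim T_r$ (with the convention that $\kappa=-\infty$ exactly when $T_r=0$ for all $r\geq 1$). This is the claimed equivalence. I expect the main obstacle to be the bookkeeping in the first paragraph: models of $f$ are only rational maps with a non-proper source, so one has to verify that resolving indeterminacy and compactifying with an snc boundary keeps the open part fixed and leaves $T_r$ unchanged, using (i) and (ii). One must also confirm that the orbifold base appearing in the definition is the one for $X\setminus D$ and not for its compactification.
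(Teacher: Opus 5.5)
Your proposal is correct and follows essentially the paper's own proof: one inequality comes from (iii) applied to every model, together with the invariance of $T_r$ under (i) and (ii) and the infimum in the definition; the other comes from a neat Raynaud--Gruson model with snc orbifold base, where (iv) gives $T_r=\HH^0(Y',\omega_{Y'}(\Delta)^{\otimes r})$. One small correction: properness of $X$ (hence of $Y$) is an implicit standing hypothesis rather than something you can reduce to by modifying $X$, since passing to a compactification changes $T_r$, and for non-proper $X$ the statement fails (e.g.\ $\mathrm{id}\colon\mathbb{A}^1\to\mathbb{A}^1$ with $D=\emptyset$ has $\dim T_r=\infty$ but $\kappa=-\infty$).
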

\begin{proof}
By Proposition~\ref{orbifold_base_vs_differentials_on_base}.(iii), the growth rate of the dimensions $\dim T_r$ is always \emph{at most} the Iitaka dimension of $K_Y + \Delta_{f,D}$.
By Proposition~\ref{orbifold_base_vs_differentials_on_base}.(i) and .(ii), the $\dim T_r$ do not change when replacing $f$ by a morphism $f'$ equivalent to it.
Thus, varying over all morphisms $f'$ equivalent to $f$, we see that the $\dim T_r$ grow at most like a polynomial of degree $\kappa$ in $r$.

On the other hand, by Lemma~\ref{lemma:raynaud_gruson}, the morphism $f$ is equivalent to a dominant neat morphism $f'$ whose orbifold base is supported on an snc divisor.
By Proposition~\ref{orbifold_base_vs_differentials_on_base}.(i) and .(ii), we may compute $\dim T_r$ using $f'$ instead of $f$.
Since by definition $f$ and $f'$ have the same Kodaira dimension, replacing $f$ by $f'$, we may assume that $f$ is neat and that $\Delta_{f,D}$ has snc support.
In that situation, by Proposition~\ref{orbifold_base_vs_differentials_on_base}.(iv), we have that $T_r = \HH^0(Y,\omega_Y(\Delta_{f,D})^{\otimes r})$.
In particular, $\dim T_r$ grows like a polynomial of degree $\kappa(Y, K_Y + \Delta_{f,D})$.
If $X \setminus D \to Y$ has Kodaira dimension $\kappa$, the Iitaka dimension of $K_Y + \Delta_{f,D}$ is at least $\kappa$, and so, the dimension $\dim T_r$ grows at least like a polynomial of degree $\kappa$ in $r$.
\end{proof}

\begin{remark} \label{compute_kodaira_dim_on_neat_model}
Corollary~\ref{gen_type_criterion}, combined with Proposition~\ref{orbifold_base_vs_differentials_on_base}, shows that if $f \colon X \ratmap Y$ is a rational map and $X' \to X$, $Y' \to Y$ are proper birational morphisms such that $f$ lifts to a \emph{neat} morphism $f' \colon X' \to Y'$ with orbifold base $\Delta$ supported on an snc divisor, we have that $\kappa(f) = \kappa(Y', K_{Y'} + \Delta)$.
In other words, the morphism $f'$ realizes the infimum in Definition~\ref{def:kodaira_dim_of_morphism}.
(Similar results have been noted before, cf. \cite[Proposition~1.25]{Ca04} or \cite[Corollaire~5.11]{Ca11}.)
\end{remark}

\begin{remark}\label{remark:Tr}
Since the spaces $T_r$ are invariant under suitable modifications of $X$ and $Y$, it also makes sense to define them for any dominant rational map $f \colon X \ratmap Y$ of (not necessarily proper) varieties $X$ and $Y$.
To do so, let $X'\to X$ be a proper modification with $X'$ smooth, $\overline{X'}$ an snc compactification of $X'$ with snc boundary $D$, $Y'\to Y$ a proper modification with $Y'$ smooth, and $\overline{Y'}$ a smooth compactification of $Y'$ such that $f$ lifts to a morphism $\overline{X'}\to \overline{Y'}$.
Then define $T_r$ (of $f$) using the morphism $\overline{X'}\to \overline{Y'}$ and the divisor $D$; by Proposition \ref{orbifold_base_vs_differentials_on_base}.(i) and .(ii), this is independent of the choices made.
The characterization of $\kappa(f)$ given in Corollary~\ref{gen_type_criterion} continues to hold, since $\kappa(f)$ is, by definition, also invariant under proper birational modifications of $X$ and arbitrary birational modifications of $Y$.
\end{remark}

Lastly, we obtain the following result, which as a special case implies that if one restricts a morphism of general type with positive-dimensional fibers to a hyperplane section, one again obtains a morphism of general type.
(This result is similar to \cite[Proposition~2.10]{Ca04}).

\begin{corollary} \label{precomposition_stability}
Let $f \colon Y \ratmap Z$ be a dominant rational map of general type with $Y$ and $Z$ smooth.
Let $X$ be a smooth variety and let $g \colon X \to Y$ be a morphism such that the composition $f \circ g \colon X \ratmap Z$ is defined and dominant.
Then $X \ratmap Z$ is of general type.
\end{corollary}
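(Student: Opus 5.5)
The plan is to use the model-independent characterization of Corollary~\ref{gen_type_criterion}, as extended to rational maps in Remark~\ref{remark:Tr}. Write $d = \dim Z$. Let $T_r(f)$ be the space of rational sections of $\omega_Z^{\otimes r}$ whose pullback to $Y$ is regular with at most log-poles at infinity, meaning: regular on a resolution of $Y$, with log-poles along an snc boundary of a compactification. Define $T_r(f\circ g)$ in the same way using $X$. Since $f$ is of general type, $\dim T_r(f)$ grows like a polynomial of degree $d$ in $r$. It therefore suffices to prove the inclusion $T_r(f) \subseteq T_r(f\circ g)$ for every $r$. Then $\dim T_r(f\circ g)$ grows at least like degree $d$, while by Proposition~\ref{orbifold_base_vs_differentials_on_base}.(iii) it is bounded by $\kappa \le d$; hence $\kappa(f\circ g) = d$.

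To prove the inclusion, I first choose models. Take a proper modification $Y' \to Y$ resolving the indeterminacy of $f$, with $Y'$ smooth, and an snc compactification $\overline{Y'}$ with boundary $D_Y$ such that $f$ extends to a morphism $\overline{Y'} \to \overline{Z}$. Next take a proper modification $X' \to X$ with $X'$ smooth such that $g$ lifts to a morphism $X' \to Y'$; for instance, resolve the main component of $X \times_Y Y'$, which is legitimate since $Y' \to Y$ is proper birational. Then compactify $X'$ to $\overline{X'}$ with snc boundary $D_X$ so that the map extends to a morphism $\overline{X'} \to \overline{Y'}$; this needs a further blowup of the boundary only, using resolution of indeterminacy. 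By Remark~\ref{remark:Tr}, these choices compute $T_r(f\circ g)$ via the morphism $\overline{X'} \to \overline{Z}$ and the divisor $D_X$.

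Now take $\omega \in T_r(f)$. The form $f^*\omega$ is a global section of $\Omega^d_{\overline{Y'}}(\log D_Y)^{\otimes r}$. Since the morphism $\overline{X'} \to \overline{Y'}$ maps $X'$ into $Y'$, the preimage of $D_Y$ is contained in $D_X$. Pullback of logarithmic forms along a morphism of snc pairs with $h^{-1}(D_Y) \subseteq D_X$ preserves log-poles, which is a local computation in snc coordinates. Hence $(f\circ g)^*\omega = g'^*f^*\omega$ lies in $\HH^0(\Omega^d_{\overline{X'}}(\log D_X)^{\otimes r})$, and so $\omega \in T_r(f\circ g)$.

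The main technical point I expect is the model bookkeeping. One must ensure that the lift of $g$ sends the interior $X'$ into $Y'$ and not into the boundary $D_Y$, which holds because $g$ is a morphism into $Y$ and $Y' \to Y$ is proper. One must also confirm that the definition of $T_r$ in Remark~\ref{remark:Tr} legitimately uses possibly non-neat models, which it does by Proposition~\ref{orbifold_base_vs_differentials_on_base}.(i) and (ii). Beyond this, no neatness of $g$ is needed, since only the upper bound from (iii) is applied to $f \circ g$.
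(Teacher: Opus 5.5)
Your proposal is correct and is essentially the paper's proof. Both arguments use Corollary~\ref{gen_type_criterion} and Remark~\ref{remark:Tr} to reduce to the inclusion $T_r(f)\subseteq T_r(f\circ g)$: a form on $Z$ whose pullback to $Y$ is regular with at most log-poles at infinity also pulls back to a regular form with at most log-poles at infinity on $X$. Your snc-coordinate argument using $h^{-1}(D_Y)\subseteq D_X$ simply spells this observation out.

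One small fix is needed in your bookkeeping. Since $g$ need not be dominant, $X\times_Y Y'$ has a component birational to $X$ only if $Y'\to Y$ is an isomorphism over an open set meeting $g(X)$; proper birationality alone does not guarantee this, e.g.\ if $Y'$ blows up $g(X)$. So take $Y'\to Y$ to be an isomorphism over the domain of definition of $f$. This is possible since $Y$ is smooth, permitted by the independence of choices in Remark~\ref{remark:Tr}, and sufficient because $f\circ g$ is defined.
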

\begin{proof}
By Corollary~\ref{gen_type_criterion} and Remark~\ref{remark:Tr}, it suffices to prove that the dimensions of the spaces of pluridifferential forms on $Z$ whose pullback along $f\circ g$ is regular on $X$ with at most log-poles at infinity grow like a polynomial of degree $\dim(Z)$.
To show this, first observe that if $\omega$ is a pluridifferential form on $Z$ whose pullback to $Y$ is regular with at most log-poles at infinity, then its pullback to $X$ is regular with at most log-poles at infinity as well.
Consequently, since $f$ is of general type, the claim follows by Corollary~\ref{gen_type_criterion}.
\end{proof}

\subsection{The orbifold base in families}

In this section, we prove that given a family of morphisms over some base variety $T$, the formation of the orbifold base commutes with restriction to the fiber for general $t \in T$.
A similar (but weaker) statement was proven independently by Cadorel--Deng--Yamanoi \cite[Lemma~4.16]{CadorelDengYamanoi}.
Let us first make our setup precise.

Let $T$ be a smooth variety over $k$ and let $\mathcal{X} \to T$, $\mathcal{Y} \to T$ be smooth surjective morphisms with $\mathcal{X}$ and $\mathcal{Y}$ integral varieties of the same dimension.
Assume that $\mathcal{Y} \to T$ has geometrically connected fibers.
Let $F \colon \mathcal{X} \to \mathcal{Y}$ be a dominant morphism over $T$ and let $\Delta_F$ be its orbifold base.

\begin{lemma}\label{lemma:orbifold_base_restricted_to_fibers}
In the above setting, there is a dense open subset $T^\circ \subseteq T$ such that for every $t \in T^\circ$, the restricted map $F_t \colon \mathcal{X}_t \to \mathcal{Y}_t$ is dominant and its orbifold base satisfies $\Delta_{F_t} = (\Delta_F)|_{\mathcal{Y}_t}$.
\end{lemma}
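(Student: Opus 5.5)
Because $\Delta_F$ has finitely many nonzero coefficients, I will shrink $T$ in finitely many steps, each time discarding a proper closed subset. First, dominance of $F_t$ for general $t$: since $\mathcal{X}$ and $\mathcal{Y}$ have the same dimension and $F$ is dominant, $F$ is generically finite. Hence there is a dense open $V \subseteq \mathcal{Y}$ over which $F$ is finite and étale. The image of $\mathcal{Y}\setminus V$ in $T$ is either not dense, or its fibres have dimension smaller than $\dim \mathcal{Y}_t$ for general $t$. Since $\mathcal{Y}_t$ is irreducible (smooth with connected fibres), for general $t$ the open $V_t$ is dense in $\mathcal{Y}_t$ and lies in the image of $F_t$, so $F_t$ is dominant.

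Next, fix a prime divisor $D \subseteq \mathcal{Y}$ and decompose $F^*D = R + \sum_i a_i F_i$ as in the definition. I split into two cases.

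\emph{Horizontal $D$ (dominating $T$).} By generic smoothness applied to $D$, to every $F_i$, to every component of $R$, and to their pairwise intersections, after shrinking $T$ the following hold for every $t$.
\begin{itemize}
\item $D_t$ is a reduced divisor on $\mathcal{Y}_t$, possibly reducible; let its components be $D_t^{(j)}$.
\item $(F_i)_t$ is reduced.
\item $(F^*D)_t = F_t^*(D_t)$. This holds because restricting to a smooth fibre commutes with pulling back Cartier divisors.
\end{itemize}
Then $F_t^*D_t = R_t + \sum_i a_i (F_i)_t$. To pass to each component $D_t^{(j)}$ I need three further facts for general $t$.
\begin{itemize}
\item Every component of $(F_i)_t$ dominates some $D_t^{(j)}$. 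Indeed $F_i \to D$ is dominant, hence generically finite, and the generic-fibre argument of the first step applied to $F_i\to D$ gives this.
\item Every $D_t^{(j)}$ is dominated by some component of some $(F_i)_t$. This again follows from dominance of $F_i \to D$ on fibres.
\item Components of $R_t$ do not dominate any $D_t^{(j)}$, since $\dim F(R) < \dim D$ is preserved on general fibres.
\end{itemize}
It follows that the infimum of multiplicities over each $D_t^{(j)}$ is exactly $\min_i a_i = m_F(D)$, and the coefficients agree. When $m_F(D)=\infty$, all of $F^*D$ lies in $R$, and the third fact shows the coefficient of each $D_t^{(j)}$ is $1$.

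\emph{Vertical $D$ (not dominating $T$).} Then $D$ avoids the general fibre, and I simply remove the closure of its image from $T$.

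There remains a converse issue: a prime divisor $E \subseteq \mathcal{Y}_t$ may have $m_{F_t}(E)>1$ without lying in $\Delta_F$. To rule this out, I use that the set $B$ of points of $\mathcal{Y}$ over which $F$ fails to be étale is closed. Every divisorial component of $B$ is either a prime divisor $D$, for which the previous case applies (including $D$ with $m_F(D)=1$), or a component of codimension at least $2$. For general $t$, every component of $B_t$ of codimension at least $2$ in $\mathcal{Y}$ still has codimension at least $2$ in $\mathcal{Y}_t$ (shrink $T$ as in the first step). A divisor $E$ on $\mathcal{Y}_t$ not contained in $B_t$ has a point over which $F_t$ is étale, so $m_{F_t}(E)=1$.

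The main obstacle is the horizontal case. The difficulty is ensuring that multiplicities and the dominant/contracted dichotomy survive restriction, especially when $D_t$ becomes reducible. I would handle this carefully by working over the generic point $\eta$ of $T$: over $\eta$, all the statements hold for the map $\mathcal{X}_\eta \to \mathcal{Y}_\eta$ over the field $k(T)$ by flat base change to the local rings at generic points of divisors. I would then spread them out to an open subset of $T$, passing to an algebraic closure of $k(T)$ and using constructibility to compare with closed points $t$.
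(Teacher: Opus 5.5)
Your proof is correct and follows essentially the paper's route: the étale locus of $F$ rules out new divisors in $\Delta_{F_t}$, and generic reducedness of $D_t$ and $(F_i)_t$ together with generic preservation of (non-)domination matches the multiplicities. The only real difference is that the paper first removes a codimension-two subset of $\mathcal{Y}$ to eliminate contracted components, whereas you keep $R$ and show it stays non-dominating. Two small points should be made explicit. First, the minimizing $F_{i_0}$ (indeed every $F_i$) must dominate every $D_t^{(j)}$; this is what your finite-étale argument actually gives, and it is more than the weaker ``some $(F_i)_t$'' you state. Second, for general $t$, distinct components $D \neq D'$ of $B$ must share no codimension-one component in $D_t \cap D'_t$, which follows from the same dimension count applied to $D \cap D'$ (the paper's condition (iv)); without it the coefficients of $(\Delta_F)|_{\mathcal{Y}_t}$ could add up.
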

\begin{proof}
First, note that there is a dense open $T^\circ \subseteq T$ such that $F_t$ is dominant for every $t \in T^\circ$.
(Indeed, the set $T^\circ$ may be taken to be the image in $T$ of the interior of the image of $F$.)
Thus, shrinking $T$ if necessary, we may assume that every $F_t$ is dominant.

Since $F$ is a dominant morphism between varieties of the same dimension, there is a closed subset $Z \subseteq \mathcal{Y}$ of codimension at least two such that the induced map $F^{-1}(\mathcal{Y} \setminus Z) \to \mathcal{Y} \setminus Z$ has finite fibers.
Moreover, since the morphisms $F_t$ are all dominant, the intersection of $Z$ with any $\mathcal{Y}_t$ has codimension at least two in $\mathcal{Y}_t$.
Thus, replacing $\mathcal{Y}$ by $\mathcal{Y} \setminus Z$ changes neither the orbifold base of $F$ nor the orbifold base of the $F_t$.
Consequently, we may assume that $F$ has finite fibers and in particular, that there are no $F$-exceptional divisors and no $F_t$-exceptional divisors for any $t \in T$.

Consider the maximal open set $\mathcal{X}^\circ$ on which $F$ is étale and let $\mathcal{Y}^{\circ} := F(\mathcal{X}^\circ)$.
Since étale maps are open, $\mathcal{Y}^{\circ}$ is open in $\mathcal{Y}$.
Replacing $T$ by a dense open if necessary, we may assume that $\mathcal{Y}^{\circ} \to T$ is still surjective.

Observe that if $E \subseteq \mathcal{Y}$ is a divisor not contained in $\Delta_F$, the pullback $F^*E$ contains a divisor with multiplicity $1$.
Thus, $\mathcal{X}^\circ$ has nonempty intersection with $F^{-1}(E)$ and the generic point of $E$ is contained in $\mathcal{Y}^\circ$.
Conversely, if $E \subseteq \supp \Delta_F$, then, at a general point of $\mathcal{X}$ lying over $E$, the map $F$ is not étale, and hence the generic point of $E$ does not lie in $\mathcal{Y}^\circ$.
It follows that $\mathcal{Y} \setminus \mathcal{Y}^\circ = \supp \Delta_F \cup Z$ for some closed subset $Z \subseteq \mathcal{Y}$ of codimension at least two.
Shrinking $T$ if necessary, we may assume that for every $t \in T$, we have that $Z_t \subseteq \mathcal{Y}_t$ is of codimension at least two as well.

As the morphism $\mathcal{X}^{\circ} \to \mathcal{Y}^{\circ}$ is étale surjective, we see that for every $t$ in $T$ and every $y \in \mathcal{Y}^\circ_t$, there is at least one point of $\mathcal{X}_t$ lying over $y$ at which $F_t$ is étale.
Such $y$ cannot be contained in the orbifold base of $F_t$, hence we see that for every $t \in T$, we have $\supp \Delta_{F_t} \subseteq \supp (\Delta_F|_{\mathcal{Y}_t})$.
It remains to make sure that the multiplicities agree.

To ensure this, we must shrink $T$ further, so that the following conditions are satisfied for all $t \in T$:
\begin{enumerate}[label=(\roman*)]
\item For every irreducible component $E \subseteq \supp \Delta_F$, the intersection $E \cap \mathcal{Y}_t$ is reduced.
\item For every irreducible component $E \subseteq \supp \Delta_F$ and every irreducible component $D \subseteq F^{-1}(E)$, the intersection $D \cap \mathcal{X}_t$ is reduced. (Here, $F^{-1}(E)$ denotes the set-theoretic preimage.)
\item For every irreducible component $E \subseteq \supp \Delta_F$ and every irreducible component $D \subseteq F^{-1}(E)$, the degree of the map $D \cap \mathcal{X}_t \to E \cap \mathcal{Y}_t$ is the same as the degree of $D \to E$. (In particular, every irreducible component of $E \cap \mathcal{Y}_t$ has a divisor lying over it.)
\item For every pair of distinct irreducible components $E_1, E_2 \subseteq \supp \Delta_F$, the intersections $E_1 \cap \mathcal{Y}_t$ and $E_2 \cap \mathcal{Y}_t$ share no irreducible component of codimension one in $\mathcal{Y}_t$.
\end{enumerate}

Now, fix a $t \in T$ and an irreducible divisor $E \subseteq \supp (\Delta_F|_{\mathcal{Y}_t})$.
Then, there is an irreducible component $\widetilde{E} \subseteq \Delta_F$ (unique by condition (iv) above) such that $\mathcal{Y}_t \cap \widetilde{E}$ contains $E$ as an irreducible component.
By condition (i) above, $E$ appears in this intersection with coefficient $1$.
Let $m$ be the multiplicity of $\widetilde{E}$ in $\Delta_F$ and let $\widetilde{D}$ be a component of $F^* \widetilde{E}$ whose coefficient in $F^* \widetilde{E}$ is $m$.
By condition (iii) above, there is an irreducible component $D$ of $\widetilde{D} \cap \mathcal{X}_t$ mapping to $E$, which, by condition (ii), occurs in $\widetilde{D} \cap \mathcal{X}_t$ with coefficient $1$.
Thus, $D$ occurs in $F_t^*E$ with coefficient $m$.
This shows that the multiplicity of $E$ in $\Delta_{F_t}$ is at most $m$.
Hence $\Delta_{F_t} \leq (\Delta_F)|_{\mathcal{Y}_t}$.

On the other hand, we have that $F_t^*E \subseteq \mathcal{X}_t \cap F^*\widetilde{E}$, i.e.\ every divisor $D$ appearing in $F_t^*E$ is the restriction to $\mathcal{X}_t$ of a divisor $\widetilde{D}$ appearing in $F^*\widetilde{E}$, and the coefficients agree.
In particular, every coefficient of a divisor appearing in $F_t^*E$ also appears in $F^*\widetilde{E}$, so that the multiplicity of $\widetilde{E}$ in $\Delta_F$ is at most the multiplicity of $E$ in $\Delta_{F_t}$.
This shows the other inequality, finishing the proof.
\end{proof}

\section{Bounds on graphs of dominant maps}

In this section we prove that the family of maps of general type with fixed source $X$ is bounded (Theorem \ref{thm:boundedness}); this extends Maehara's result \cite[Proposition~3.3]{Maehara}.
In our proof, we follow Maehara's strategy and start with the following uniformity statement.
(Recall that a $\mathbb{Q}$-divisor $\Delta$ on a variety is said to have \emph{standard coefficients} if its coefficients lie in the set $\{ 1-\frac{1}{m}~|~m \in \mathbb{Z}_{\geq 1} \} \cup \{1\}$.)

\begin{theorem} \label{thm:hacon}
Fix a positive integer $d$. 
Then, there is a positive integer $r$ such that for every $d$-dimensional smooth projective variety $X$ and every effective $\mathbb{Q}$-divisor $\Delta$ on $X$ with standard coefficients and snc support such that $K_X + \Delta$ is big, the divisor $\floor{r(K_X + \Delta)}$ defines a map 
\[ X \ratmap \abs{\floor{r(K_X + \Delta)}}^\vee \]
which is birational onto its image.
\end{theorem}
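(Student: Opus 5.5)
The plan is to deduce Theorem~\ref{thm:hacon} from the boundedness results of Hacon--McKernan--Xu for log pairs with coefficients in a DCC set. We do not reprove the deep part; the work is to reduce our statement to theirs. The set $I := \{1-\frac{1}{m} : m \in \mathbb{Z}_{\geq 1}\} \cup \{1\}$ satisfies the descending chain condition. Hacon--McKernan--Xu (\emph{ACC for log canonical thresholds}, Theorem~1.3, building on Hacon--McKernan and Takayama) then give an integer $r_0 = r_0(d,I)$ with the following property: for every log canonical pair $(X,\Delta)$ with $X$ projective of dimension $d$, coefficients of $\Delta$ in $I$, and $K_X+\Delta$ big, the map defined by $\floor{r(K_X+\Delta)}$ is birational for every $r \geq r_0$. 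Our pairs are smooth with snc support and coefficients at most $1$, so they are log smooth, hence log canonical, and the hypotheses are met. We take $r := r_0$.

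In order, the steps are as follows. First, I will check that the hypotheses match exactly: $X$ smooth projective of dimension $d$, $\Delta$ effective with snc support and coefficients in $I$, so $(X,\Delta)$ is dlt and in particular lc. Second, I will reconcile conventions. By the paper's convention, $\abs{\floor{r(K_X+\Delta)}}$ means the linear system of the integral divisor $rK_X + \floor{r\Delta}$, which is the one appearing in HMX. Third, I will note that birationality for all $r\geq r_0$ certainly yields a single uniform $r$, which is all that is claimed.

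If one wanted a more self-contained route following the strategy of Hacon--McKernan, one would argue in two steps.
\begin{itemize}
\item[(a)] Bound the volume from below: the volumes $\vol(K_X+\Delta)$ lie in a DCC set, so they are bounded below by a constant $v(d,I)>0$.
\item[(b)] Use the lower bound to produce birationality. Pick a very general point $x$ and build an lc center at $x$ using sections of $m(K_X+\Delta)$ with $m$ controlled by $v$. Then cut down by induction on the dimension of the center, and conclude by Nadel vanishing (Takayama's argument).
\end{itemize}
The main obstacle in that route is step (a), the DCC for volumes. It needs induction on dimension together with the ACC for lc thresholds, and it is exactly the content of HMX. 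So citing that result is the realistic plan. The only genuine care required is ensuring that the standard coefficients, including the coefficient $1$, are allowed. They are, since $I$ is DCC with $I \subseteq [0,1]$.
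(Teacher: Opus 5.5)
Your proposal is correct and matches the paper's proof: the paper likewise notes that the set of standard coefficients satisfies the DCC and then invokes the Hacon--McKernan--Xu effective birationality theorem for log canonical pairs with DCC coefficients. The paper cites it as Theorem~C of Hacon--McKernan--Xu, so check your theorem number against that. Your extra checks (log smooth implies lc, and the convention $\floor{r(K_X+\Delta)} = rK_X + \floor{r\Delta}$) are exactly the routine verifications that citation needs.
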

\begin{proof}
The set of standard coefficients satisfies the descending chain condition (abbreviated by DCC in \cite{HaconMcKernanXu}), so that the theorem follows from \cite[Theorem~C]{HaconMcKernanXu}.
\end{proof}

\begin{remark}
If $d=1$, using the classification of smooth proper orbifold curves of general type, one can show that $r = 42$ satisfies the above conclusion.
\end{remark}

\begin{remark}[Volume and degree]\label{remark:volume}
Let $X$ be a proper variety of dimension $d$ and let $D \subseteq X$ be a divisor.
Then the \emph{volume} of $D$, denoted by $\vol_X(D)$, is defined by
\[ \vol_X(D) := \lim_{m \to \infty} \frac{d!}{m^d} h^0(mD). \]
If $D \leq E$ are two divisors, it is immediate from the definition that $\vol_X(D) \leq \vol_X(E)$.
For nef divisors $D$, we have the easier formula $\vol_X(D) = D^d$; see \cite[Corollary~1.4.41]{Lazarsfeld1}.
In particular, if $D$ is very ample and $X \subseteq \mathbb{P}^n$ is the associated embedding, the volume of $D$ is the degree of $X$ in $\mathbb{P}^n$.


If $p \colon X' \to X$ is a proper birational map and $D$ is a divisor on $X$, then $\vol_{X'}(p^*D) = \vol_X(D)$; see \cite[Proposition~2.2.43]{Lazarsfeld1}.
Thus, if $D$ is a big divisor such that the rational map $X \ratmap \mathbb{P}^n$ is birational onto its image $X_0$, we see that $\deg(X_0) \leq \vol_X(D)$.
Moreover, if $f \colon X \to Y$ is a generically finite morphism of proper varieties, there is an inequality $\vol_Y(D) \leq \vol_X(f^*D)$.
Hence we see that if $D$ is a big divisor on a proper variety $X$ such that the associated rational map $X \ratmap \mathbb{P}^n$ is generically finite onto its image $X_0$, we still have $\deg(X_0) \leq \vol_X(D)$.

The main use of the volume for us lies in the following observation:
Let $f \colon X \ratmap Y$ be a rational map of proper varieties, let $H$ be an ample divisor on $X$ and let $D$ be a big divisor on $Y$ such that the induced rational map $Y \ratmap \mathbb{P}^n$ is birational onto its image $Y_0$.
Consider the graph $\Gamma \subseteq X \times Y_0 \subseteq X \times \mathbb{P}^n$ of $f$.
Then, the degree of $\Gamma$ with respect to the ample class $(H, \mathcal{O}_{\mathbb{P}^n}(1))$ is bounded above by $\vol_X(H+f^*D)$.
Indeed, this is essentially a direct consequence of the previous discussion, as $X$ is birational to $\Gamma$.
\end{remark}

\begin{theorem}\label{thm:boundedness}
Let $X$ be a smooth variety over $k$.
Then, there is an integer $n \geq 1$ such that there exist finite type $k$-schemes $H_1,H_2,\ldots,H_n$ as well as closed subschemes $\mathcal{F}_m \subseteq H_m \times (X\times \mathbb{P}^m)$ for $m=1,\ldots,n$ satisfying the following property:
For every dominant rational map $f \colon X \ratmap Y$ of general type with $\dim X = \dim Y$, there exists a closed point $h$ in some $H_m$ such that $\mathcal{F}_{m,h} \subseteq X \times \mathbb{P}^m$ is the graph of a rational map $f_h \colon X \ratmap \mathbb{P}^m$ whose image factorization is equivalent to $f$. 
\end{theorem}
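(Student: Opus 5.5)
We follow Maehara's strategy: embed each target birationally by a uniform pluri-log-canonical system, then bound the degree of the resulting graph in $X\times\mathbb{P}^N$ using the volume estimate of Remark~\ref{remark:volume}. The Chow variety (or Hilbert scheme) of bounded-degree subvarieties of $X\times\mathbb{P}^m$, for the finitely many relevant $m$, then supplies the parameter spaces $H_m$ and universal families $\mathcal{F}_m$.

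\textbf{Reductions.} First we reduce to $X$ smooth projective. Choose a smooth projective compactification $\overline{X}$ with snc boundary $B$. A map $X\ratmap Y$ extends to $\overline{X}\ratmap Y$, and graphs in $\overline X\times\mathbb{P}^m$ restrict to graphs in $X\times\mathbb{P}^m$, so it suffices to parametrize graphs over $\overline{X}$. Given $f\colon X\ratmap Y$ of general type with $\dim X=\dim Y=d$, we use Lemma~\ref{lemma:raynaud_gruson} to replace it by an equivalent model $f'\colon X'\to Y'$. Here $X'\to\overline{X}$ is a proper modification with snc preimage $D$ of $B$, $Y'$ is smooth projective, $X'\setminus D\to Y'$ is neat, and $\Delta:=\Delta_{f',D}$ has snc support. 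By Remark~\ref{compute_kodaira_dim_on_neat_model}, $K_{Y'}+\Delta$ is big, and $\Delta$ has standard coefficients by construction. Theorem~\ref{thm:hacon} therefore gives an $r$, depending only on $d$, such that $|\floor{r(K_{Y'}+\Delta)}|$ maps $Y'$ birationally onto its image $Y_0\subseteq\mathbb{P}^m$, where $m+1=h^0(\floor{r(K_{Y'}+\Delta)})$. The image factorization of $X\ratmap Y_0$ is then equivalent to $f$.

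\textbf{Bounding $m$ and the degree.} By Lemma~\ref{lemma:pullback_of_forms}, pullback injects $\HH^0(\omega_{Y'}(\Delta)^{\otimes r})$ into $\HH^0(\omega_{X'}(D)^{\otimes r})=\HH^0(\overline X,\omega_{\overline X}(B)^{\otimes r})$, and the latter depends only on $X$. Hence $m$ is bounded, so only finitely many $m\le n$ occur. The same injectivity applied to all multiples $\ell r$ shows that the pullback of $\floor{r(K_{Y'}+\Delta)}$ to $X'$ is at most $r(K_{X'}+D)$ in the sense of linear systems. The composite $X'\ratmap\mathbb{P}^m$ is given by a subsystem of $|r(K_{X'}+D)|$, namely the pulled-back sections.

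Fix an ample $H$ on $\overline X$. The graph $\Gamma\subseteq\overline{X}\times\mathbb{P}^m$ is the image of $X'$ under $(p, \phi)$. Its degree with respect to $(H,\mathcal{O}(1))$ is at most the volume of $p^*H+M$, where $M\le r(K_{X'}+D)$ is the moving part. This gives
\[ \deg\Gamma\le \vol_{X'}\bigl(p^*H+r(K_{X'}+D)\bigr). \]
The obstacle is that this bound must be independent of the model $X'$. We handle it via log-canonical invariance: $\HH^0(\ell(p^*H+r(K_{X'}+D)))$ equals $\HH^0$ of the corresponding log-pluricanonical twist on $\overline X$. The reason is that $K_{X'}+D=p^*(K_{\overline X}+B)+E$ with $E$ effective and $p$-exceptional (log discrepancies are $\ge 0$ for the snc pair $(\overline X,B)$), and exceptional effective divisors add no sections. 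Hence the volume equals $\vol_{\overline X}(H+r(K_{\overline X}+B))$, a constant depending only on $X$ and $r$. The intermediate inequality $\vol(\text{moving part})\le\vol(\text{full divisor})$ follows from Remark~\ref{remark:volume}.

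\textbf{Conclusion.} Subvarieties of $\overline X\times\mathbb{P}^m$ with bounded degree and dimension $d$ are parametrized by finitely many components of the Chow variety or Hilbert scheme, which are of finite type. Let $H_m$ be the (locally closed, finite type) locus whose fibres are graphs of rational maps, i.e.\ irreducible, reduced, and birational onto $\overline X$. Let $\mathcal{F}_m$ be the restricted universal family, intersected with $X\times\mathbb{P}^m$. The graph of $f$ then corresponds to a closed point $h$ of some $H_m$, as required.
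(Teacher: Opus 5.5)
Your proof is correct and follows the paper's argument: a neat model with snc orbifold base, the uniform $r$ from Theorem~\ref{thm:hacon}, the injection of Lemma~\ref{lemma:pullback_of_forms} to bound both $m$ and the degree of the graph via volumes, and Chow/Hilbert-scheme finiteness. Your log-discrepancy identity $K_{X'}+D=p^*(K_{\overline X}+B)+E$, which gives $\vol_{X'}(p^*H+r(K_{X'}+D))=\vol_{\overline X}(H+r(K_{\overline X}+B))$, usefully makes explicit the model-independence of the degree bound that the paper only asserts (and there the factor $r$ is dropped).
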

\begin{proof}
Let $d = \dim(X)$ and let $r$ be as in Theorem \ref{thm:hacon}. 
Consider the space $V := \HH^0(\omega_X^{\otimes r})_{\log}$ of differential forms which have at most log-poles at infinity (defined in Section \ref{sect:diff_forms}), and let $n =\dim V - 1$.

Consider a dominant rational map $f \colon X \ratmap Y$ of general type with $\dim X = \dim Y$.
Choose a neat model $f' \colon X' \to Y'$ of $f$ whose orbifold base $\Delta_{f'}$ is snc (Lemma~\ref{lemma:raynaud_gruson}).
As the rational map $f \colon X \ratmap Y$ is of general type, the divisor $K_{Y'} + \Delta_{f'}$ on $Y'$ is big.
Thus, our choice of $r$ ensures that the rational map $Y' \ratmap \mathbb{P}^m$ induced by the linear system $\abs{\floor{r(K_{Y'} + \Delta_{f'})}}$ is birational onto its image.
Moreover, by Lemma~\ref{lemma:pullback_of_forms}, there is a natural injection $\HH^0(Y',\omega_{Y'}(\Delta_{f'})^{\otimes r}) \to \HH^0(X', \omega_{X'}^{\otimes r})_{\log}$ and by the discussion in Section~\ref{sect:diff_forms}, the latter space identifies with $V$.
In particular, we have $m \leq n$.

Fix a compactification $X \subseteq \overline{X}$ with snc boundary $D$ and an ample divisor class $H$ on $\overline{X}$.
Let $\Gamma \subseteq \overline{X} \times \mathbb{P}^m$ be the graph of the rational map $f \colon X \ratmap Y$ considered above, composed with the birational embedding $Y \ratmap \mathbb{P}^m$.
By Remark~\ref{remark:volume}, we see that the degree of $\Gamma$ with respect to the class $(H, \mathcal{O}_{\mathbb{P}^m}(1))$ is bounded by $\vol_{\overline{X}}(H + f^*(\floor{r(K_{Y'} + \Delta_{f'})}))$.
In particular, using Lemma~\ref{lemma:pullback_of_forms} again, we see that the degree of $\Gamma$ is bounded by $\vol_{\overline{X}}(H + K_{\overline{X}} + D)$.
Observe that this bound is independent of $f$ and instead, only depends on $X$, $\overline{X}$ and $H$.

Now, as in \cite[Section~2]{Maehara}, a result due to Chow implies that all the graphs $\Gamma$ lie in finitely many irreducible components of the Hilbert scheme $\Hilb(\overline{X} \times \mathbb{P}^m)$, where $m$ runs over the integers from $1$ to $n$. 
Thus, we can take each $H_m$ to be the collection of the relevant components of $\Hilb(\overline{X} \times \mathbb{P}^m)$ and the $\mathcal{F}_m$ to be the associated universal families.
\end{proof}

\section{Weak positivity and nefness}

We will need the following weak positivity result for the sheaf of relative pluricanonical forms. 
Similar versions of this result are due to Kawamata \cite{KawamataAbelian}, Viehweg \cite{Viehweg1,Viehweg2}, Kollár \cite[Corollary~3.7]{Kollar86}, Campana \cite[Theorem~4.13]{Ca04}, Nakayama \cite[Theorem~3.35]{Nakayama}, Fujino \cite[Theorem~1.1]{Fujino} \cite[Theorem~1.7]{Fujino18}, and Dutta--Murayama \cite[Theorem~D]{DuttaMurayama}.

Recall that a locally free sheaf $\mathcal{E}$ on a variety $X$ is said to be \emph{nef} if the line bundle $\mathcal{O}(1)$ on $\mathbb{P}(\mathcal{E})$ is nef.

\begin{theorem} \label{thm:campana}
Let $C$ be a smooth projective curve, let $X$ be a smooth projective variety and let $f \colon X \to C$ be a projective morphism with geometrically connected fibers.  
Let $\Delta$ be a $\mathbb{Q}$-divisor on $X$ and suppose that there is a dense open $U \subseteq C$ such that the restriction of $\Delta$ to $f^{-1}(U)$ has snc support and has coefficients in $[0,1]$.
Then, for every integer $m$, the locally free sheaf $f_*(\omega_{X/C}(\Delta)^{\otimes m})$ on $C$ is nef.
\end{theorem}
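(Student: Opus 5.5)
The plan is to reduce Theorem~\ref{thm:campana} to a known weak positivity statement for log-canonical pairs over a curve, namely Fujino's theorem \cite[Theorem~1.1]{Fujino} (or Campana's \cite[Theorem~4.13]{Ca04}). That result asserts nefness of $f_*\mathcal{O}_X(m(K_{X/C}+\Delta))$ when $(X,\Delta)$ is log canonical over the generic point of $C$. Two points require care. First, our hypothesis only controls $\Delta$ over a dense open $U$, whereas the cited results often require an snc pair with boundary coefficients in $[0,1]$ globally. Second, our convention writes $\omega_{X/C}(\Delta)^{\otimes m}$ for $\omega_{X/C}^{\otimes m}(\floor{m\Delta})$, so we must make sure the rounding is harmless.

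The first step is to modify $\Delta$ away from $f^{-1}(U)$. Write $\Delta=\Delta_h+\Delta_v$, where $\Delta_v$ collects the components lying in fibers over $C\setminus U$. Every component of $\Delta_h$ meets $f^{-1}(U)$, so its coefficient lies in $[0,1]$. Since $\omega_{X/C}$ is unchanged by shrinking $C$, the sheaf $f_*(\omega_{X/C}(\Delta)^{\otimes m})$ differs from $f_*(\omega_{X/C}(\Delta_h)^{\otimes m})$ only through a twist by the vertical divisor $\floor{m\Delta}-\floor{m\Delta_h}$, which is supported on finitely many fibers. If this twist is effective, we get an inclusion of torsion-free (hence locally free, since $C$ is a curve) sheaves that is generically an isomorphism. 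A locally free sheaf containing a nef subsheaf of the same rank with torsion cokernel is again nef: any quotient line bundle of the larger sheaf receives a nonzero map from the smaller one, so its degree is at least that of a quotient of the nef subsheaf. When the vertical part has negative coefficients, I would instead add a multiple of full fibers $f^*(\text{points})$, which only twists the direct image by a line bundle. Nefness is not preserved by negative twists, though, so this case needs more care.

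The cleaner route is to also take a log resolution $\mu\colon X'\to X$ of $(X,\supp\Delta+f^{-1}(C\setminus U))$, which is an isomorphism over $U$ where the support is already snc. Define $\Delta'$ to be the strict transform of $\Delta_h$ plus the reduced total fibre over $C\setminus U$, with coefficients truncated to $[0,1]$. Then $(X',\Delta')$ is an snc pair with coefficients in $[0,1]$, and Fujino's theorem gives nefness of $f'_*(\omega_{X'/C}(\Delta')^{\otimes m})$. Comparing with the original sheaf requires checking two things. Over $U$ the two agree. Near the bad fibres, the original sheaf sits inside the new one, or conversely, with torsion cokernel. Since nefness on a curve can be tested on quotient line bundles after any finite base change, it passes from a generically isomorphic subsheaf to an oversheaf.

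The main obstacle is this direction issue: arranging the comparison so that the \emph{nef} sheaf is the subsheaf and ours is the oversheaf, rather than the reverse. I would first try to show that adding the reduced bad fibres and enlarging vertical coefficients only enlarges the pushforward, since forms with log poles along the whole fibre contain everything. Because our $\Delta$ has arbitrary, possibly large, vertical coefficients, I would handle the remaining discrepancy by twisting by $f^*\mathcal{O}_C(N\cdot(C\setminus U))$, which is a positive twist and preserves nefness. With these three steps the statement reduces directly to the cited weak positivity theorem.
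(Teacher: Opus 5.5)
Your reduction fails at exactly the step you single out as the main obstacle, and neither of your proposed repairs fixes it. Write $B:=C\setminus U$ and $\mathcal{F}:=f_*(\omega_{X/C}(\Delta)^{\otimes m})$. Once the reduced total fibres over $B$ are put into $\Delta'$, the nef sheaf $\mathcal{F}_{\mathrm{nef}}:=(f\circ\mu)_*(\omega_{X'/C}(\Delta')^{\otimes m})$ allows log poles along every component of the bad fibres. It is therefore typically \emph{larger} than $\mathcal{F}$ there, and in general it is not contained in $\mathcal{F}$. Showing that ``enlarging vertical coefficients only enlarges the pushforward'' just confirms that the nef sheaf is on the wrong side. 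Twisting by $\mathcal{O}_C(NB)$ cannot help. It only produces the inclusion $\mathcal{F}_{\mathrm{nef}}\otimes\mathcal{O}_C(-NB)\subseteq\mathcal{F}$, whose left side is not nef, or the inclusion $\mathcal{F}_{\mathrm{nef}}\subseteq\mathcal{F}\otimes\mathcal{O}_C(NB)$, which gives nefness of a positive twist of $\mathcal{F}$ rather than of $\mathcal{F}$ itself. The negative-coefficient case you leave open is not merely delicate; it is false. For $X=C\times F$ with $\HH^0(F,\omega_F^{\otimes m})\neq0$ and $\Delta=-f^*c$, one gets $\mathcal{F}\cong\HH^0(F,\omega_F^{\otimes m})\otimes\mathcal{O}_C(-mc)$, which has negative degree. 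So $\Delta\geq0$ has to be assumed, and the paper's proof does assume it implicitly.

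The missing idea, which is the paper's argument, is to keep the lc boundary as \emph{small} as possible. All vertical discrepancy then goes into divisors that either only enlarge the sheaf or disappear under pushforward.
\begin{itemize}
\item After replacing $\Delta$ by $\frac{1}{m}\floor{m\Delta}$, take a log resolution $p\colon X'\to X$ that is an isomorphism over $f^{-1}(U)$.
\item Let $\Delta'$ be \emph{only} the strict transform of the horizontal part of $\Delta$, and write $K_{X'}+\Delta'+E'=p^*(K_X+\Delta)+E$ with $E,E'\geq0$ sharing no components.
\item Then $E'$ is vertical; it absorbs the vertical part of $\Delta$, however large its coefficients. Since $\Delta\geq0$, $E$ is $p$-exceptional.
\item Weak positivity for the lc pair $(X',\Delta')$ (Dutta--Murayama in the paper) makes the pushforward for $\Delta'$ nef.
\item That sheaf sits inside the pushforward for $\Delta'+E'$ with torsion cokernel, which is now the right direction.
\item Finally, $\omega_{X'/C}(\Delta'+E')^{\otimes m}\cong p^*\omega_{X/C}(\Delta)^{\otimes m}\otimes\mathcal{O}_{X'}(mE)$ together with $p_*\mathcal{O}_{X'}(mE)=\mathcal{O}_X$ identifies that pushforward with $\mathcal{F}$.
\end{itemize}

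Note also that your first paragraph says the cited theorem covers effective $\Delta$ with $(X,\Delta)$ log canonical only over the generic point of $C$. If it really does, the rounding $\Delta\mapsto\frac{1}{m}\floor{m\Delta}$ alone finishes the proof, and your three steps are unnecessary. The reduction above is needed only when starting from a statement for globally lc pairs.
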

\begin{proof}
Since $C$ is a smooth curve, every torsionfree sheaf on $C$ is locally free, so that $f_*(\omega_{X/C}(\Delta)^{\otimes m})$ is locally free.
Replacing $\Delta$ by $\frac{1}{m} \floor{m \Delta}$, we may assume that $m \Delta$ is a $\mathbb{Z}$-divisor.

Let $p \colon X' \to X$ be a proper birational morphism which is an isomorphism over $f^{-1}(U)$ such that the union of the strict transform of $\supp \Delta$ and the exceptional divisors of $p$ is an snc divisor on $X'$.
Then we can write
\begin{equation}\label{discrepancy_equation}\tag{$*$}
K_{X'} + \Delta' + E' = p^*(K_X + \Delta) + E, 
\end{equation}
where $\Delta'$, $E$ and $E'$ are effective $\mathbb{Q}$-divisors on $X'$ which do not share any irreducible components and with $\Delta'$ being the strict transform of the horizontal part of $\Delta$.
Note that $E$ is $p$-exceptional and that $E'$ does not dominate $C$ under $f \circ p$.
By construction, the divisor $\Delta'$ has snc support and has coefficients in $[0,1]$.
Thus, the pair $(X', \Delta')$ is log-canonical, so that it follows from \cite[Theorem~D]{DuttaMurayama} that $(f \circ p)_*(\omega_{X'/C}(\Delta')^{\otimes m})$ is weakly positive on $C$.  
Consequently, since a weakly positive sheaf on a curve is automatically nef, it is nef.  

As $E'$ is effective, we have an inclusion $\omega_{X'/C}(\Delta')^{\otimes m} \subseteq \omega_{X'/C}(\Delta'+E')^{\otimes m}$, which induces an inclusion $(f \circ p)_*(\omega_{X'/C}(\Delta')^{\otimes m}) \subseteq (f \circ p)_*(\omega_{X'/C}(\Delta'+E')^{\otimes m})$. 
Since $E'$ does not dominate $C$, these sheaves are generically isomorphic and thus have the same rank.
In particular, since $(f \circ p)_*(\omega_{X'/C}(\Delta')^{\otimes m})$ is nef, the sheaf $(f \circ p)_*(\omega_{X'/C}(\Delta'+E')^{\otimes m})$ is nef as well.  

Now, by Equation (\ref{discrepancy_equation}), we have $\omega_{X'/C}(\Delta'+E')^{\otimes m} \cong p^*\omega_{X/C}(\Delta)^{\otimes m} \otimes \mathcal{O}_{X'}(mE)$.  
Thus, the locally free sheaf $(f \circ p)_*\left(p^*(\omega_{X/C}(\Delta)^{\otimes m}) \otimes \mathcal{O}_{X'}(mE)\right)$ is nef.
Since $E$ is effective and $p$-exceptional, we have that $p_*\mathcal{O}_{X'}(mE) \cong \mathcal{O}_X$.
Hence the projection formula implies that $p_*(p^*(\omega_{X/C}(\Delta)^{\otimes m}) \otimes \mathcal{O}_{X'}(mE)) \cong \omega_{X/C}(\Delta)^{\otimes m} \otimes p_*\mathcal{O}_{X'}(mE) \cong \omega_{X/C}(\Delta)^{\otimes m}$.
Consequently, $f_*(\omega_{X/C}(\Delta)^{\otimes m})$ is nef, as desired.  
\end{proof}

\section{Triviality of the relative Iitaka map}

In this section we show that, under suitable regularity assumptions, the relative Iitaka map associated to a family of dominant maps with fixed source is trivial; see Corollary~\ref{cor:iitaka} for a precise statement.
This is roughly equivalent to showing that, given a family $(f_t \colon X \to Y_t)_{t \in T}$ of such morphisms, the image of the pullback map $\HH^0(Y_t, \omega_{Y_t}(\Delta_t)^{\otimes N}) \to \HH^0(X,\omega_{X}^{\otimes N})$ is independent of $t$.
To do so, we will use the Grassmannian associated to the vector space $\HH^0(X,\omega_{X}^{\otimes N})$.

\subsection{The Grassmannian}

Let $V$ be a finite dimensional $k$-vector space and $\nu$ an integer.
Let $\Grass_\nu(V)$ be the Grassmannian parametrizing quotient spaces of $V$ of dimension $\nu$.
Recall that $\Grass_\nu(V)$ is smooth and projective over $k$ of dimension $\nu(\dim V -\nu)$ and that for a $k$-scheme $T$, we have
\[ \Grass_\nu(V)(T) = \left\{ 
\begin{gathered} \text{isomorphism classes of surjective}~\mathcal{O}_T\text{-module homomorphisms} \\ q \colon V \otimes_{k} \mathcal{O}_T \to \mathcal{Q}~\text{with}~\mathcal{Q}~\text{locally free of rank}~\nu \end{gathered}
\right\}. \]

From the functorial interpretation of the Grassmannian, we see that the identity $\Grass_\nu(V) \to \Grass_\nu(V)$ corresponds to a morphism of $\mathcal{O}_{\Grass_\nu(V)}$-modules $q_{\text{univ}} \colon V \otimes_{k} \mathcal{O}_{\Grass_\nu(V)} \to \mathcal{Q}_{\text{univ}}$ with $\mathcal{Q}_{\text{univ}}$ locally free of rank $\nu$.
The bundle $\mathcal{Q}_{\text{univ}}$ is known as the \emph{universal quotient bundle}.
It also follows immediately that given a morphism $f \colon T \to \Grass_\nu(V)$ and its corresponding $\mathcal{O}_T$-module homomorphism $q \colon V \otimes_k \mathcal{O}_T \to \mathcal{Q}$, we have that $f^*\mathcal{Q}_{\text{univ}} \cong \mathcal{Q}$ and moreover $f^*q_{\text{univ}} \cong q$.
Furthermore, we obtain a surjection $\left(\bigwedge^\nu V\right) \otimes_{k} \mathcal{O}_{\Grass_\nu(V)} \to \det(\mathcal{Q}_{\text{univ}})$.
By the functorial interpretation of projective space, this surjection gives rise to a morphism $\Grass_\nu(V) \to \mathbb{P}(\bigwedge^\nu V)$ such that the line bundle $\det(\mathcal{Q}_{\text{univ}})$ on $\Grass_\nu(V)$ is the pullback of the $\mathcal{O}(1)$.
This morphism is well-known to be a closed immersion; in particular we see that $\det(\mathcal{Q}_{\text{univ}})$ is ample.

\subsection{Linearization of admissible families}

Let $k$ be an algebraically closed field of characteristic zero.
Throughout this section, we will work in the following setup.

\begin{setup} \label{setup:admissible}
Let $X$ be a smooth projective variety over $k$ and let $D \subseteq X$ be an snc divisor.
Let $N \geq 1$ be an integer.
Let $T$ be a smooth variety, let $\mathcal{X} \to X \times T$ be a projective birational morphism with $\mathcal{X}$ a smooth $k$-variety, and let $q \colon \mathcal{Y} \to T$ be a projective morphism with $\mathcal{Y}$ a smooth $k$-variety of dimension $\dim \mathcal{Y} = \dim \mathcal{X}$.
Let $p \colon \mathcal{X} \to T$ denote the induced morphism and let $\mathcal{D}$ denote the preimage of $D \times T$ in $\mathcal{X}$.
Assume that for every $t \in T$, the divisor $\mathcal{D}_t$ has simple normal crossings.
Let $F \colon \mathcal{X} \to \mathcal{Y}$ be a dominant generically finite morphism over $T$ and let $\Delta$ be the orbifold base of $\mathcal{X} \setminus \mathcal{D} \to \mathcal{Y}$.
Assume that $F$ satisfies the following properties.
\begin{enumerate} 
\item The induced morphism $\mathcal{X} \setminus \mathcal{D} \to \mathcal{Y}$ is relatively neat over $T$.
\item The orbifold base $\Delta$ is supported on an snc divisor.
\item The (torsionfree) coherent sheaf $q_* \left((\omega_{\mathcal{Y}/T}(\Delta))^{\otimes N}\right)$ is locally free.
\item The cokernel of the natural morphism
\[ q_*(\omega_{\mathcal{Y}/T}(\Delta)^{\otimes N}) \to p_*(\omega_{\mathcal{X}/T}(\mathcal{D})^{\otimes N}) \]
is locally free. (This morphism is well-defined by Lemma~\ref{lemma:pullback_of_forms_relative}.)
\end{enumerate}
\end{setup}

Suppose we are in Setup~\ref{setup:admissible}.
We define $V = \HH^0(X,\omega_X(D)^{\otimes N})$; note that $p_*(\omega_{\mathcal{X}/T}(\mathcal{D})^{\otimes N}) = V \otimes \mathcal{O}_T$.
Let $Q$ be the cokernel of
\[ q_\ast(\omega_{\mathcal{Y}/T}(\Delta)^{\otimes N}) \to p_\ast (\omega_{\mathcal{X}/T}(\mathcal{D})^{\otimes N}). \]
By assumption, $Q$ is locally free and we let $\nu$ denote its rank.
Thus, we can make the following definition.

\begin{definition}[Linearization map]
The \emph{linearization of $F$} is the morphism
\[ \mathrm{Lin}_F^N \colon T \to \Grass_\nu(V) \]
defined by $Q$.
\end{definition}

\begin{remark} \label{remark:linearization_map_interpretation}
At points of $T$ over which $F$ is sufficiently well-behaved, the linearization morphism has a straightforward interpretation.
Suppose that $t \in T(k)$ is a point for which the morphism $F_t \colon \mathcal{X}_t\to \mathcal{Y}_t$ is a dominant neat morphism of smooth varieties.
Then we have an injection $F_t^\ast \colon \HH^0(\mathcal{Y}_t, \omega_{\mathcal{Y}_t}(\Delta_t)^{\otimes N}) \to \HH^0(\mathcal{X}_t,\omega_{\mathcal{X}_t}(\mathcal{D}_t)^{\otimes N})$.
Suppose moreover that the natural morphism $\mathcal{X}_t \to X$ is birational and that $\mathcal{D}_t$ is the preimage of $D$ under this morphism.
Then we have a natural isomorphism $\HH^0(\mathcal{X}_t,\omega_{\mathcal{X}_t}(\mathcal{D}_t)^{\otimes N}) \cong \HH^0(X,\omega_X(D)^{\otimes N}) = V$.
Unraveling definitions, we thus see that for such a point $t \in T$, the linearization map $\mathrm{Lin}_F^N$ maps $t$ to the image of $F_t^\ast$ in $V$.
\end{remark}

\begin{theorem}\label{thm:linearization_is_constant}
In the situation of Setup~\ref{setup:admissible}, the linearization map $\mathrm{Lin}_F^N$ is constant.
\end{theorem}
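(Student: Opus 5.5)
To prove that $\mathrm{Lin}_F^N$ is constant, I would show that it contracts every curve in $T$. Suppose the map is not constant. Then we can choose a smooth projective curve $C$ together with a morphism $C \to T$ whose normalized image $\gamma\colon C^\circ \to T$ is a non-constant curve, where $C^\circ \subseteq C$ is a dense open. Since $\det(\mathcal{Q}_{\text{univ}})$ is ample on $\Grass_\nu(V)$, non-constancy means $\deg_C \gamma^*\det Q > 0$ after extending to $C$. We will derive a contradiction from a degree computation on $C$.

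First we base change the whole Setup to $C^\circ$ and extend over the compactification $C$. Pull back $\mathcal{X}$ and $\mathcal{Y}$ along $\gamma$, and take a generic curve through general points, so that the pulled-back spaces stay smooth over a dense open. Compactify to obtain $\overline{\mathcal{X}}_C \to \overline{\mathcal{Y}}_C \to C$ with smooth projective total spaces. We can take $\overline{\mathcal{X}}_C$ birational to $X\times C$ over $C$, so that its relative log-pluricanonical pushforward is $V\otimes\mathcal{O}_C$, which is trivial. Because $Q$ is locally free and the sheaves in (iii) are locally free, the exact sequence $0\to q_*(\omega_{\mathcal{Y}/T}(\Delta)^{\otimes N})\to V\otimes\mathcal{O}_T\to Q\to 0$ pulls back to an exact sequence on $C^\circ$. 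Then $\det Q|_{C^\circ}\cong \det(\mathcal{E})^{-1}$, where $\mathcal{E}$ is the pullback of $q_*(\omega_{\mathcal{Y}/T}(\Delta)^{\otimes N})$.

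Next we extend $\mathcal{E}$ to $C$ as the subsheaf $\overline{\mathcal{E}}$ of $V\otimes \mathcal{O}_C$ given by its saturation. The quotient $\overline{Q}$ is then locally free, and it extends the classifying map $C\to \Grass_\nu(V)$. So $\deg \overline{Q} = -\deg\overline{\mathcal{E}}$, and this must be positive. On the other hand, I claim that $\overline{\mathcal{E}}$ contains $\overline{q}_*(\omega_{\overline{\mathcal{Y}}_C/C}(\overline\Delta)^{\otimes N})$ with equal rank, where $\overline\Delta$ is the closure of $\Delta$ restricted to the curve, with coefficients in $[0,1]$. Over $C^\circ$ the two sheaves agree by compatibility of pushforward with the general base change; this requires restricting relative neatness as in Lemma~\ref{lemma:pullback_of_forms_relative}. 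The inclusion at the boundary points comes from Lemma~\ref{lemma:pullback_of_forms_converse}-type reasoning: a relative form on $\overline{\mathcal{Y}}_C$ pulls back to a section of the pushforward from $\overline{\mathcal{X}}_C$, which is $V\otimes\mathcal{O}_C$ up to log poles. Saturation then absorbs any discrepancy.

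By Theorem~\ref{thm:campana}, $\overline{q}_*(\omega_{\overline{\mathcal{Y}}_C/C}(\overline\Delta)^{\otimes N})$ is nef, and in particular has nonnegative degree. Here one needs $\overline{\mathcal{Y}}_C\to C$ to have connected fibers; if it does not, one passes to the Stein factorization, or uses that Theorem~\ref{thm:campana} applies after a finite base change, which only scales degrees. A saturated subsheaf of equal rank containing it has degree at least as large, so $\deg\overline{\mathcal{E}}\ge 0$. Hence $\deg \overline{Q}\le 0$. Since $\overline{Q}$ is a quotient of a trivial bundle, it is globally generated, so $\deg \overline{Q}\ge 0$. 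Therefore $\deg\overline{Q}=0$, which means $\gamma^*\det\mathcal{Q}_{\text{univ}}$ has degree zero, contradicting ampleness and non-constancy.

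I expect the main obstacle to be the boundary comparison: producing compactifications over $C$ on which the relevant pushforward from $\mathcal{Y}$ really maps into the trivial bundle $V\otimes\mathcal{O}_C$, and checking that base change to a curve preserves the hypotheses of Setup~\ref{setup:admissible}, namely relative neatness and the snc property. For the latter, I would first try choosing $C$ through very general points and invoking Lemma~\ref{lemma:raynaud_gruson} and Lemma~\ref{lemma:orbifold_base_restricted_to_fibers} on the base-changed family.
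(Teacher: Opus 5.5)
Your architecture matches the paper's. You reduce to a curve, extend the classifying map over the compactification, and bound $\deg\overline{\mathcal{E}}$ (the paper's $\mathcal{K}$) from below by the degree of a nef pushforward via Theorem~\ref{thm:campana}. You then contradict ampleness of $\det\mathcal{Q}_{\text{univ}}$.

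\textbf{The gap.} It lies exactly at the boundary step you flag, and the justification you give does not work. You need $\overline{q}_*(\omega_{\overline{\mathcal{Y}}_C/C}(\overline\Delta)^{\otimes N})\subseteq V\otimes\mathcal{O}_C$, i.e.\ that pulled-back relative forms acquire no poles over $C\setminus C^\circ$.
\begin{enumerate}
\item Lemma~\ref{lemma:pullback_of_forms_converse} goes in the wrong direction: it infers orbifold bounds on $\omega$ from regularity of $F^*\omega$.
\item Saturation cannot absorb anything. $\overline{\mathcal{E}}$ sits inside $V\otimes\mathcal{O}_C$, so a section whose pullback has a pole at $c$ is not in it. You would only get $\overline{q}_*(\dots)\subseteq\overline{\mathcal{E}}(kc)$, which says nothing about the sign of $\deg\overline{\mathcal{E}}$.
\end{enumerate}

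For an arbitrary smooth projective compactification the inclusion is genuinely false. Take the constant family $f\times\mathrm{id}$, where $f\colon X\to\mathbb{P}^1$ has orbifold base $\sum_i(1-\frac{1}{m_i})[y_i]$ of general type and all $y_i\neq\infty$. Compactify $\mathbb{P}^1\times C^\circ$ over $c$ by the elementary transformation at $(\infty,c)$. Near $c$ the new model is $\mathbb{P}^1_u\times U$ with $u=ty$, and the old fibre is contracted to the point $u=t=0$, which lies on every closure $\{u=ty_i\}$. For $N$ divisible enough, $\prod_i(u-ty_i)^{-\lfloor N(1-1/m_i)\rfloor}(du)^{\otimes N}$ is a section of $\overline{q}_*(\omega(\overline\Delta)^{\otimes N})$ near $c$. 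Since $du=t\,df$, its pullback is $t^{\,N-\sum_i\lfloor N(1-1/m_i)\rfloor}$ times a nonzero element of $V$, which has a pole at $t=0$. The culprit is that $X\times\{c\}$ is contracted by the extended map, i.e.\ the extension is not neat over $c$.

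\textbf{The missing step.} After compactifying, apply Lemma~\ref{lemma:raynaud_gruson} over the \emph{projective} curve. This makes $\overline{\mathcal{X}}_C\setminus\overline{\mathcal{D}}\to\overline{\mathcal{Y}}_C$ relatively neat over all of $C$ with snc orbifold base $\overline\Delta$. Here $\overline\Delta$ is the orbifold base of the extended map, which may contain vertical components over $C\setminus C^\circ$. Lemma~\ref{lemma:pullback_of_forms_relative}, applied over $C$, then gives an injection $\overline{q}_*(\omega_{\overline{\mathcal{Y}}_C/C}(\overline\Delta)^{\otimes N})\hookrightarrow V\otimes\mathcal{O}_C$, and the rest of your degree argument goes through.

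This flattening also modifies $\mathcal{X}$ and $\mathcal{Y}$ over $C^\circ$. You must therefore check that the two pushforwards over $C^\circ$, and hence the linearization map, are unchanged. The paper does this via Proposition~\ref{orbifold_base_vs_differentials_on_base}, which describes sections as rational pluriforms on $\mathcal{Y}$ whose pullback is regular with log poles along $\mathcal{D}$. That description is invariant under such modifications.
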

\begin{proof}
It suffices to prove that the restriction of $\mathrm{Lin}_F^N$ to some dense open of $T$ is constant.
Since the formation of the orbifold base is compatible with replacing $T$ by a dense open of $T$, we may thus use Lemma~\ref{lemma:orbifold_base_restricted_to_fibers} and assume that the restriction of the orbifold base $\Delta$ of $F|_{\mathcal{X} \setminus \mathcal{D}}$ to $\mathcal{Y}_t$ is the orbifold base of $F_t|_{\mathcal{X}_t \setminus \mathcal{D}_t}$, for every $t \in T$.
In this situation, we also have that for every smooth quasi-projective curve $C \subseteq T$, the orbifold base of the morphism $F_C \colon \mathcal{X}_C \setminus \mathcal{D}_C \to \mathcal{Y}_C$ is the restriction of $\Delta$ to $\mathcal{Y}_C$.
Consequently, formation of the linearization map commutes with restricting to $C$.
Because a map whose restriction to every smooth quasi-projective curve is constant must be constant itself, we are thus reduced to the case that $T = C$ is a curve.  

Let $\overline{C}$ denote the smooth projective model of $C$.
Before proceeding with the proof, we first want to extend the morphism $F \colon \mathcal{X} \to \mathcal{Y}$ to a morphism $\overline{F} \colon \overline{\mathcal{X}} \to \overline{\mathcal{Y}}$ such that $\overline{\mathcal{X}} \setminus \overline{\mathcal{D}} \to \overline{\mathcal{Y}}$ is relatively neat over $\overline{C}$.
To do so, first choose any projective extension $\overline{q} \colon \overline{\mathcal{Y}}\to \overline{C}$ with $\overline{\mathcal{Y}}$ smooth and any projective birational extension $\overline{\mathcal{X}} \to X \times \overline{C}$ with $\overline{\mathcal{X}}$ smooth such that $F$ extends to a $\overline{C}$-morphism $\overline{F} \colon \overline{\mathcal{X}} \to \overline{\mathcal{Y}}$.
Let $\overline{\mathcal{D}}$ denote the closure of $\mathcal{D}$ in $\overline{\mathcal{X}}$.
Then, using Lemma~\ref{lemma:raynaud_gruson}, birationally modify $\overline{\mathcal{X}}$, $\overline{\mathcal{Y}}$, and $\overline{F}$ to obtain a morphism $\overline{F}' \colon \overline{\mathcal{X}}' \to \overline{\mathcal{Y}}'$ over $\overline{C}$ such that the restricted morphism $\overline{\mathcal{X}}' \setminus \overline{\mathcal{D}}' \to \overline{\mathcal{Y}}'$ is relatively neat over $\overline{C}$ and has an orbifold base $\overline{\Delta}'$ with snc support; here $\overline{\mathcal{D}}'$ denotes the preimage of $\overline{\mathcal{D}}$ in $\overline{\mathcal{X}}'$.
Let $\mathcal{X}'$, $\mathcal{Y}'$, and $F'$ denote the base change of $\overline{\mathcal{X}}'$, $\overline{\mathcal{Y}}'$, and $\overline{F}'$ to the open subset $C \subseteq \overline{C}$.
We want to replace $\mathcal{X}$, $\mathcal{Y}$ and $F$ by $\mathcal{X}'$, $\mathcal{Y}'$ and $F'$; then, since $F'$ extends by construction, we have achieved our goal of being able to extend $F$ in a relatively neat way.

To make sure this step is valid, we need to verify that this does not change the sheaves $p_*(\omega_{\mathcal{X}/C}(\mathcal{D})^{\otimes N})$ and $q_*(\omega_{\mathcal{Y}/C}(\Delta)^{\otimes N})$.
The sections of the sheaf $p_*(\omega_{\mathcal{X}}(\mathcal{D})^{\otimes N})$ have the interpretation of ``pluridifferential forms on $\mathcal{X}$ with at most log-poles along $\mathcal{D}$'', which do not change under proper birational morphisms.
The sections of the sheaf $q_*(\omega_{\mathcal{Y}}(\Delta)^{\otimes N})$ have the interpretation of ``rational pluridifferential forms on $\mathcal{Y}$ whose pullback to $\mathcal{X} \setminus \mathcal{D}$ is regular with at most log-poles along $\mathcal{D}$'' (Proposition~\ref{orbifold_base_vs_differentials_on_base}), and thus also do not change.
Using the isomorphism $p_*(\omega_{\mathcal{X}/C}(\mathcal{D})^{\otimes N}) \cong (p_*(\omega_{\mathcal{X}}(\mathcal{D})^{\otimes N}) \otimes \omega_C^{\otimes -N}$ given by the projection formula, we conclude that the sheaf $p_*(\omega_{\mathcal{X}/C}(\mathcal{D})^{\otimes N})$ stays unchanged, and similarly for $q_*(\omega_{\mathcal{Y}/C}(\Delta)^{\otimes N})$.
We conclude that replacing $\mathcal{X}$, $\mathcal{Y}$, and $F$ by $\mathcal{X}'$, $\mathcal{Y}'$, and $F'$ does not change the linearization map.
Consequently, we may from now on assume that $F$ extends to a morphism $\overline{F} \colon \overline{\mathcal{X}} \to \overline{\mathcal{Y}}$ such that $\overline{\mathcal{X}} \setminus \overline{\mathcal{D}} \to \overline{\mathcal{Y}}$ is relatively neat over $\overline{C}$ and has snc orbifold base $\overline{\Delta}$.
 
Recall that $Q$ denotes the cokernel of the morphism
\[ q_\ast(\omega_{\mathcal{Y}/C}(\Delta)^{\otimes N}) \to p_\ast (\omega_{\mathcal{X}/C}(\mathcal{D})^{\otimes N}). \]
Note that $V\otimes \mathcal{O}_{\overline{C}} = \overline{p}_\ast (\omega_{\overline{\mathcal{X}}/ \overline{C}}(\overline{\mathcal{D}})^{\otimes N})$.
Consequently, Lemma~\ref{lemma:pullback_of_forms_relative} yields a morphism
\[\overline{q}_{\ast}(\omega_{\overline{\mathcal{Y}}/\overline{C}}(\overline{\Delta})^{\otimes N}) \to V \otimes \mathcal{O}_{\overline{C}};\]
let $\overline{Q}$ denote its cokernel.
Let $\Tor(\overline{Q}) \subseteq \overline{Q}$ be the torsion subsheaf, so that $\overline{Q}/\Tor(\overline{Q})$ is locally free on $\overline{C}$.
The surjection $V \otimes \mathcal{O}_{\overline{C}} \to \overline{Q}/\Tor(\overline{Q})$ hence defines a morphism $\overline{C}\to \Grass_\nu(V)$, and since $Q$ is the restriction of $\overline{Q}/\Tor(\overline{Q})$ to $C$, this morphism $\overline{C}\to \Grass_\nu(V)$ uniquely extends $\mathrm{Lin}_F^N\colon C\to \Grass_\nu(V)$.

Let $\mathcal{K}$ be the kernel of the morphism $V \otimes \mathcal{O}_{\overline{C}} \to \overline{Q}/\Tor(\overline{Q})$ and note that $\mathcal{K}$ is a locally free sheaf on $\overline{C}$.
By construction, there is an inclusion $\overline{q}_{\ast}(\omega_{\overline{\mathcal{Y}}/\overline{C}}(\overline{\Delta})^{\otimes N}) \subseteq \mathcal{K}$ which is an isomorphism over $C$.
In particular, $\overline{q}_{\ast}(\omega_{\overline{\mathcal{Y}}/ \overline{C}}(\Delta)^{\otimes N})$ and $\mathcal{K}$ have the same rank.
Theorem~\ref{thm:campana} implies that $\overline{q}_{\ast}(\omega_{\overline{\mathcal{Y}}/\overline{C}}(\overline{\Delta})^{\otimes N})$ is nef on $\overline{C}$.
In particular, $\det \overline{q}_{\ast}(\omega_{\overline{\mathcal{Y}}/\overline{C}}(\overline{\Delta})^{\otimes N})$ is nef, so that
\[ \deg \overline{q}_{\ast}(\omega_{\overline{\mathcal{Y}}/\overline{C}}(\overline{\Delta})^{\otimes N}) \geq 0 \]
and hence
\[ \deg \mathcal{K} \geq \deg \overline{q}_{\ast}(\omega_{\overline{\mathcal{Y}}/ \overline{C}}(\Delta)^{\otimes N}) \geq 0. \]

On the other hand, the pullback of the universal quotient bundle $\mathcal{Q}_{\text{univ}}$ on $\Grass_\nu(V)$ along the morphism $\overline{C}\to \Grass_\nu(V)$ is $\overline{Q}/\Tor(\overline{Q})$.
In particular, since $\det\left( \mathcal{Q}_{\text{univ}}\right)$ is ample, we see that if $\mathrm{Lin}_F^N$ is nonconstant, then $\deg(\overline{Q}/\Tor(\overline{Q}))>0$.
However, $V\otimes \mathcal{O}_{\overline{C}}$ has degree zero, so that $\deg(\overline{Q}/\Tor(\overline{Q})) + \deg(\mathcal{K}) = 0$.
Since $\deg(\mathcal{K}) \geq 0$, we conclude that $\mathrm{Lin}_F^N$ is constant, as desired.
\end{proof}

The fact that $\mathrm{Lin}_F^N$ is constant entails birational triviality of the Iitaka map.
The following statement generalizes \cite[Proposition~4.1.3]{Maehara}; note the lack of a ``general type'' assumption here.

\begin{corollary}\label{cor:iitaka}
In the situation of Setup~\ref{setup:admissible}, assume additionally that $q_{\ast}(\omega_{\mathcal{Y}/T}(\Delta)^{\otimes N})$ is not the zero sheaf. Then, the following statements hold.
\begin{itemize}
\item The rational map $\iota \colon \mathcal{Y} \ratmap \mathbb{P}(q_{\ast}(\omega_{\mathcal{Y}/T}(\Delta)^{\otimes N}))$ is constant over $T$, i.e., there is a variety $Z$ over $k$ such that the image of $\iota$ is $Z \times T$.
\item There is a rational map $\phi\colon X\ratmap Z$ such that the composed rational map $X\times T \ratmap \mathcal{Y}\ratmap Z \times T$ is $\phi\times T$.
\end{itemize}
\end{corollary}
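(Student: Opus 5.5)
The plan is to deduce both statements from Theorem~\ref{thm:linearization_is_constant}. Write $\mathcal{W} := q_\ast(\omega_{\mathcal{Y}/T}(\Delta)^{\otimes N})$; by Setup~\ref{setup:admissible}(iii) it is locally free, of some rank $s \geq 1$. By (iv), the sequence
\[ 0 \to \mathcal{W} \to V \otimes \mathcal{O}_T \to Q \to 0 \]
is exact with $Q$ locally free, so $\mathcal{W}$ is a subbundle of the trivial bundle $V \otimes \mathcal{O}_T$. Since $\mathrm{Lin}_F^N$ is constant, with value some fixed quotient $V \to V/W_0$, this subbundle is the constant one $W_0 \otimes \mathcal{O}_T$ for a fixed subspace $W_0 \subseteq V$ of dimension $s$. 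Hence $\mathbb{P}(\mathcal{W}) \cong \mathbb{P}(W_0) \times T$ over $T$.

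Next I identify the relative Iitaka map with this trivialization. The map $\iota$ is given by evaluating sections of $\mathcal{W}$ on $\mathcal{Y}$. Via the injection of Lemma~\ref{lemma:pullback_of_forms_relative}, a section of $\mathcal{W}$ corresponds to a form on $\mathcal{X}$, and the composite $\iota \circ F \colon \mathcal{X} \ratmap \mathbb{P}(W_0) \times T$ is therefore given by evaluating the pulled-back forms. These pulled-back forms are exactly the elements of $W_0 \subseteq V = \HH^0(X,\omega_X(D)^{\otimes N})$, pulled back along $\mathcal{X} \to X \times T \to X$ (and twisted by $\omega_T^{\otimes -N}$, which only rescales fibrewise). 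Consequently $\iota \circ F$ factors as $\mathcal{X} \to X \times T \ratmap \mathbb{P}(W_0) \times T$, where the second map is $\phi \times \mathrm{id}_T$ and $\phi \colon X \ratmap \mathbb{P}(W_0)$ is the rational map defined by the linear system $W_0$ on $X$. Let $Z$ be the closure of the image of $\phi$.

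Since $F$ is dominant, the image of $\iota$ equals the image of $\iota \circ F$, which is $Z \times T$. This proves the first bullet. The second bullet follows because $\mathcal{X} \to X \times T$ is birational, so the composed rational map $X \times T \ratmap \mathcal{Y} \ratmap Z \times T$ is $\phi \times T$.

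The main obstacle is the compatibility check. I must verify that the local sections of $\mathcal{W}$, viewed through Lemma~\ref{lemma:pullback_of_forms_relative}, really define $\iota$ compatibly with pullback, that is, that evaluating $F^\ast s$ at $x$ agrees with evaluating $s$ at $F(x)$ up to the fibrewise nonzero Jacobian factor on the étale locus. I expect this to hold because $F$ is generically étale and everything is computed on a dense open subset of $\mathcal{X}$. Rationality of the maps lets me restrict to that open, where the identification is a tautology.
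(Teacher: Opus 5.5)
Your proposal is correct and follows essentially the same route as the paper. The paper also uses Theorem~\ref{thm:linearization_is_constant} to see that the subbundle $q_*(\omega_{\mathcal{Y}/T}(\Delta)^{\otimes N}) \subseteq V \otimes \mathcal{O}_T$ is constant; it phrases this as the linear projection $\mathbb{P}(V)\times T \ratmap \mathbb{P}(q_*(\omega_{\mathcal{Y}/T}(\Delta)^{\otimes N}))$ away from $\mathbb{P}(Q)$ being constant over $T$, composes it with the constant map $X\times T \ratmap \mathbb{P}(V)\times T$, and uses dominance of $X\times T\ratmap \mathcal{Y}$ to identify the image of $\iota$. Working directly with the linear system $W_0$ instead of passing through $\mathbb{P}(V)$ is only a rephrasing, and the compatibility you flag is exactly the commutativity of the paper's square, which indeed holds on the étale locus of $F$ as you say.
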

\begin{proof}
Since the sheaf $q_*(\omega_{\mathcal{Y}/T}(\Delta)^{\otimes N})$ is nonzero, the natural morphism of sheaves
\[ q^* q_*(\omega_{\mathcal{Y}/T}(\Delta)^{\otimes N}) \to \omega_{\mathcal{Y}/T}(\Delta)^{\otimes N} \]
is nonzero as well.
Consequently, it is surjective over a dense open of $\mathcal{Y}$ and we indeed obtain a rational map $\mathcal{Y} \ratmap \mathbb{P}(q_*(\omega_{\mathcal{Y}/T}(\Delta)^{\otimes N}))$.

By Lemma~\ref{lemma:pullback_of_forms_relative}, pullback of differentials yields an injective morphism of sheaves
\[ q_*(\omega_{\mathcal{Y}/T}(\Delta)^{\otimes N}) \to p_*(\omega_{\mathcal{X}/T}(D)^{\otimes N}). \]
In particular, the latter sheaf is nonzero and we also have a rational map $\mathcal{X} \ratmap \mathbb{P}(p_*(\omega_{\mathcal{X}/T}(D)^{\otimes N}))$.
Since $\mathcal{X}$ is birational to $X \times T$ over $T$, we have an isomorphism of sheaves $p_*(\omega_{\mathcal{X}/T}(D)^{\otimes N}) \cong V \otimes \mathcal{O}_T$ and the rational map $\mathcal{X} \ratmap \mathbb{P}(p_*(\omega_{\mathcal{X}/T}(D)^{\otimes N}))$ is equivalent to the natural rational map $X \times T \ratmap \mathbb{P}(V) \times T$.
The latter is by construction constant over $T$.
We obtain the following commutative diagram.

\begin{equation*} \begin{tikzcd}
X \times T \ar[r, dashed] \ar[d, dashed] & \mathbb{P}(V) \times T \ar[d, dashed] \\
\mathcal{Y} \ar[r, dashed] & \mathbb{P}(q_*(\omega_{\mathcal{Y}/T}(\Delta)^{\otimes N}))
\end{tikzcd} \end{equation*}

The right vertical map in the above diagram is the projection away from the relative linear subspace $\mathbb{P}(Q)\subseteq \mathbb{P}(V)\times T$, where $Q$ denotes as above the cokernel of  
\[ q_*(\omega_{\mathcal{Y}/T}(\Delta)^{\otimes N}) \to p_*(\omega_{\mathcal{X}/T}(D)^{\otimes N}). \]
Thus, by Theorem~\ref{thm:linearization_is_constant}, the right vertical map is constant over $T$ (i.e., the right vertical map is fiberwise a linear map between projective spaces and its center is independent of $T$). 
It follows that the composition $X \times T \ratmap \mathbb{P}(V) \times T \ratmap \mathbb{P}(q_*(\omega_{\mathcal{Y}/T}(\Delta)^{\otimes N}))$ is constant over $T$ as well.
Since the rational map $X \times T \ratmap \mathcal{Y}$ is dominant, the image of $\mathcal{Y} \ratmap \mathbb{P}(q_*(\omega_{\mathcal{Y}/T}(\Delta)^{\otimes N}))$ coincides with the image of $X \times T \ratmap \mathbb{P}(q_*(\omega_{\mathcal{Y}/T}(\Delta)^{\otimes N}))$.
This immediately implies the assertions.
\end{proof}

\section{Generic rigidity}

We follow Maehara's strategy and prove an extension of his rigidity theorem \cite[Appendix]{Maehara} to maps of general type.
As a first step, we prove the result in the situation where all involved morphisms are neat, the varieties are smooth, and the divisors are snc.
 
\begin{lemma}[Generic rigidity, smooth version] \label{rigidity_smooth}
Let $X$ be a smooth projective variety and let $D$ be an snc divisor on $X$.
Let $T$ be a smooth variety and let $q \colon \mathcal{Y} \to T$ be a smooth projective morphism with geometrically connected fibers.
Let $\mathcal{X} \to X \times T$ be a projective birational morphism such that the induced morphism $p \colon \mathcal{X} \to T$ is smooth and let $\mathcal{D} \subseteq \mathcal{X}$ be the preimage of $D \times T \subseteq X \times T$, viewed as a reduced divisor.
Let $F \colon \mathcal{X} \to \mathcal{Y}$ be a dominant generically finite morphism over $T$.
Assume that the restricted morphism $\mathcal{X} \setminus \mathcal{D} \to \mathcal{Y}$ is relatively neat over $T$ and that its orbifold base $\Delta_{F, \mathcal{D}}$ has snc support.
Assume furthermore that the set of $t \in T(k)$ such that the restriction $F_t \colon (\mathcal{X} \setminus \mathcal{D})_t \to \mathcal{Y}_t$ is a neat morphism of general type is dense in $T$.

Then there are a smooth proper variety $Y$, a dense open $T^\circ$ of $T$ and a rational map $f \colon X \ratmap Y$ such that $\mathcal{Y}|_{T^\circ}$ is birational to $Y \times T^\circ$ and $F$ is equivalent to $f \times T^\circ$ over $T^\circ$.
\end{lemma}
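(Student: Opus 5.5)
We will reduce the statement to Corollary~\ref{cor:iitaka}, taking $N := r$ with $r$ as in Theorem~\ref{thm:hacon} (for $d = \dim X$). The plan is to show that, after shrinking $T$, the family $F$ fits into Setup~\ref{setup:admissible}. The relative Iitaka map $\iota\colon \mathcal{Y} \ratmap \mathbb{P}(q_*(\omega_{\mathcal{Y}/T}(\Delta)^{\otimes N}))$ should then be fibrewise birational. Corollary~\ref{cor:iitaka} will identify its image with $Z\times T$, and we take $Y$ to be a resolution of $Z$.

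First we shrink $T$. Conditions (i) and (ii) of Setup~\ref{setup:admissible} are among the hypotheses. Since $\mathcal{D}$ is the preimage of $D\times T$ and $p$ is smooth, generic smoothness gives a dense open subset of $T$ over which each $\mathcal{D}_t$ is snc. By generic flatness and semicontinuity, after shrinking $T$ the torsion-free sheaf $q_*(\omega_{\mathcal{Y}/T}(\Delta)^{\otimes N})$ is locally free and commutes with base change to the fibres $t$. The cokernel $Q$ in (iv) is generically locally free, so we shrink once more to make it locally free.

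Next we use Lemma~\ref{lemma:orbifold_base_restricted_to_fibers}. After a further shrinking, $\Delta|_{\mathcal{Y}_t}$ equals the orbifold base of $F_t$ for all $t$. The hypothesis provides a dense set of $t$ for which $F_t$ is neat and of general type. For such $t$, Remark~\ref{compute_kodaira_dim_on_neat_model} shows that $K_{\mathcal{Y}_t}+\Delta_t$ is big, and $\Delta_t$ has standard coefficients and snc support, after shrinking so that the snc support restricts to an snc divisor on the fibres. Theorem~\ref{thm:hacon} then says that $|\floor{N(K_{\mathcal{Y}_t}+\Delta_t)}|$ is birational onto its image. By base change, this linear system is the restriction of $\iota$ to $\mathcal{Y}_t$, so $\iota_t$ is birational for a dense set of $t$. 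The sheaf $q_*(\ldots)$ is therefore nonzero, and $\iota$ is birational onto its image, because degree of the map onto its image is constant on a dense open subset of the generic-fibre locus.

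Corollary~\ref{cor:iitaka} now gives that the image of $\iota$ is $Z\times T$ and that the composite $X\times T\ratmap\mathcal{Y}\ratmap Z\times T$ equals $\phi\times T$. Since $\iota$ is birational onto $Z\times T$, the map $\mathcal{Y}$ is birational to $Z\times T$ over $T$. On a dense open $T^\circ$ the induced birational map is fibrewise birational. We take $Y$ to be a resolution of a compactification of $Z$ and $f$ to be the composite $X\ratmap Z \ratmap Y$; then $F$ is equivalent to $f\times T^\circ$.

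I expect the main obstacle to be the base-change step. We need $q_*(\omega_{\mathcal{Y}/T}(\Delta)^{\otimes N})\otimes k(t)\to H^0(\mathcal{Y}_t,\omega_{\mathcal{Y}_t}(\Delta_t)^{\otimes N})$ to be an isomorphism for general $t$, together with the compatibility of $\floor{N\Delta}$ with restriction to fibres. The first would follow from generic flatness of $R^iq_*$ and cohomology and base change over an open subset. The second would follow from choosing $T$ so that no component of the support of $\Delta$ contains a fibre and so that the components meet the fibres transversally, using the fact that the support is snc and generic smoothness.
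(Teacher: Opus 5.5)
Your proposal is correct and follows the paper's proof. Both shrink $T$ to land in Setup~\ref{setup:admissible} with $N$ given by Theorem~\ref{thm:hacon}, use Lemma~\ref{lemma:orbifold_base_restricted_to_fibers} and cohomology and base change to identify $\iota|_{\mathcal{Y}_t}$ with the map of $\abs{\floor{N(K_{\mathcal{Y}_t}+\Delta_t)}}$, deduce that $\iota$ is birational onto its image, and conclude with Corollary~\ref{cor:iitaka}. (Your generic-smoothness argument that $\mathcal{D}_t$ is snc tacitly needs $\mathcal{D}$ itself to be snc; the paper also assumes this implicitly, e.g.\ when invoking Lemma~\ref{lemma:pullback_of_forms_relative}.)
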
 
\begin{proof}
Let $N \geq 1$ be a positive integer.
Since $\omega_{\mathcal{Y}/T}(\Delta_{F, \mathcal{D}})^{\otimes N}$ is coherent and $q$ is proper, the sheaf $q_*\left( (\omega_{\mathcal{Y}/T}(\Delta_{F, \mathcal{D}}))^{\otimes N}\right)$ is coherent.
In particular, replacing $T$ by a dense open if necessary, it is locally free.
Similarly, shrinking $T$ further if necessary, the cokernel of the morphism of coherent sheaves
\[ q_*(\omega_{\mathcal{Y}/T}(\Delta_{F, \mathcal{D}})^{\otimes N}) \to p_* (\omega_{\mathcal{X}/T}(\mathcal{D})^{\otimes N}) \]
(constructed in Lemma~\ref{lemma:pullback_of_forms_relative}) is locally free as well.
Thus, for any choice of $N \geq 1$, there is a dense open $T_N \subseteq T$ such that over $T_N$, we are in the situation of Setup~\ref{setup:admissible}.

If $t \in T$ is a point for which $F_t$ is of general type, the line bundle $\omega_{\mathcal{Y}_t}(\Delta_{F_t, \mathcal{D}_t})^{\otimes r}$ on $\mathcal{Y}_t$ is big.
Thus, by Theorem~\ref{thm:hacon}, there is an integer $r \geq 1$ such that for every $t$ with $F_t$ of general type, the rational map associated to $\omega_{\mathcal{Y}_t}(\Delta_{F_t, \mathcal{D}_t})^{\otimes r}$ is birational onto its image.
Replacing $N$ by a multiple if necessary, we may assume that this holds for $r = N$.
Note that this in particular implies that $\omega_{\mathcal{Y}_t}(\Delta_{F_t, \mathcal{D}_t})^{\otimes N}$ has at least one nonzero global section.

By Lemma \ref{lemma:orbifold_base_restricted_to_fibers}, replacing $T$ by a dense open if necessary, we have that for every $t \in T$, the restriction of $\Delta_{F,\mathcal{D}}$ to $\mathcal{Y}_t$ is exactly $\Delta_{F_t, \mathcal{D}_t}$.
Consequently, Grauert's local freeness theorem \cite[Corollary~III.12.9]{Hartshorne} implies that the fiber of $q_{\ast}(\omega_{\mathcal{Y}/T}(\Delta_{F, \mathcal{D}})^{\otimes N})$ at a point $t \in T$ is the space of global sections of $\omega_{\mathcal{Y}_t}(\Delta_{F_t, \mathcal{D}_t})^{\otimes N}$.
In particular, $q_*(\omega_{\mathcal{Y}/T}(\Delta_{F, \mathcal{D}})^{\otimes N})$ is not the zero sheaf and we obtain a rational map $\mathcal{Y} \ratmap \mathbb{P}(q_*(\omega_{\mathcal{Y}/T}(\Delta_{F, \mathcal{D}})^{\otimes N})))$ over $T$.
For every $t \in T$, this rational map restricts to the map associated to the line bundle $\omega_{\mathcal{Y}_t}(\Delta_{F_t, \mathcal{D}_t})^{\otimes N}$ and thus, its restriction to $\mathcal{Y}_t$ is birational onto its image for a dense set of $t \in T(k)$.
It follows that the rational map $\mathcal{Y} \ratmap \mathbb{P}(q_*(\omega_{\mathcal{Y}/T}(\Delta_{F, \mathcal{D}})^{\otimes N})))$ is birational onto its image.

The result now follows from Corollary~\ref{cor:iitaka}.
\end{proof}

\begin{remark}
In the setting of Lemma \ref{rigidity_smooth}, if $\mathcal{X}\to X\times T$ is assumed to be an isomorphism (i.e., not just birational) and $\mathcal{Y}_t$ is of general type, then the statement can also be deduced from a theorem of Noguchi. In fact, in this case the conclusion can even be strengthened: there is a dense open $T^\circ \subset T$ such that $\mathcal{Y}_{T^\circ}$ is \emph{isomorphic} to $Y\times T^\circ$ over $T^\circ$. This is proven in Noguchi's paper \cite{NoguchiRigidity} (see \cite[Theorem~2.8]{XY1} for details). 
\end{remark}

The general case then follows by reducing it to the nice case outlined above.
(Maehara's original rigidity theorem is Theorem \ref{thm:rigidity} below for $D$ trivial and $\mathcal{Y}\to T$ such that $\mathcal{Y}_t$ is of general type for a dense set of $t$ in $T(k)$.)

\begin{theorem}[Generic rigidity]\label{thm:rigidity}
Let $X$ be a smooth projective variety and let $D$ be an snc divisor on $X$.
Let $T$ be a smooth variety and let $q \colon \mathcal{Y}\to T$ be a smooth projective morphism with geometrically connected fibers.
Let $F \colon (X \setminus D) \times T \ratmap \mathcal{Y}$ be a dominant generically finite rational map over $T$.
Assume that the set of $t\in T(k)$ such that $F_t$ is a map of general type is dense.
Then there are a smooth proper variety $Y$, a dense open $T^\circ$ of $T$ and a rational map $f \colon X \ratmap Y$ such that $\mathcal{Y}|_{T^\circ}$ is birational to $Y \times T^\circ$ and $F$ is equivalent to $f \times T^\circ$ over $T^\circ$.
\end{theorem}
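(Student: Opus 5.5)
The plan is to reduce Theorem~\ref{thm:rigidity} to Lemma~\ref{rigidity_smooth} by replacing $F$ with a model that satisfies all the hypotheses of that lemma. The key point is that every conclusion (birationality of $\mathcal{Y}|_{T^\circ}$ with $Y\times T^\circ$, and equivalence of $F$ with $f\times T^\circ$) is insensitive to birational modifications of source and target over $T$ and to shrinking $T$. Moreover, by Corollary~\ref{gen_type_criterion} and Remark~\ref{remark:Tr}, being of general type depends only on the equivalence class of $F_t$ and on the source $X\setminus D$ up to proper modification. So we may modify freely, as long as the set of good $t$ stays dense.

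\textbf{Step 1: a neat model over $T$.} Resolve the indeterminacy of $F$ to get a projective birational $\mathcal{X}_0\to X\times T$ with $\mathcal{X}_0$ smooth and a morphism $\mathcal{X}_0\to\mathcal{Y}$ over $T$. Apply Lemma~\ref{lemma:raynaud_gruson} to the restriction of this morphism to the complement of the preimage of $D\times T$, compactifying the modification of the source so that it stays projective over $X\times T$. This produces projective birational maps $\mathcal{X}\to X\times T$ and $\mathcal{Y}'\to\mathcal{Y}$ with $\mathcal{X},\mathcal{Y}'$ smooth, and a morphism $F'\colon\mathcal{X}\to\mathcal{Y}'$ over $T$ such that $\mathcal{X}\setminus\mathcal{D}\to\mathcal{Y}'$ is relatively neat over $T$ with snc orbifold base. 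After a further log resolution we may also assume that $\mathcal{D}$, the reduced preimage of $D\times T$, is snc.

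\textbf{Step 2: generic smoothness.} Shrink $T$ to a dense open so that $p\colon\mathcal{X}\to T$ and $q'\colon\mathcal{Y}'\to T$ are smooth, and $\mathcal{D}$ and $\supp\Delta_{F',\mathcal{D}}$ are relative snc over $T$. Also arrange that $\mathcal{X}_t\to X$ is birational with $\mathcal{D}_t$ the preimage of $D$. The fibers of $q'$ stay geometrically connected, since $\mathcal{Y}'\to\mathcal{Y}$ is birational and $\mathcal{Y}\to T$ has connected fibers. By the last clause of Lemma~\ref{lemma:raynaud_gruson}, for $t$ in this open set, $(\mathcal{X}\setminus\mathcal{D})_t\to\mathcal{Y}'_t$ is neat. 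Because $F'_t$ is equivalent to $F_t$ (the target $\mathcal{Y}'_t$ is birational to $\mathcal{Y}_t$ for general $t$) with source a proper modification of $X\setminus D$, it is of general type exactly when $F_t$ is. The density hypothesis therefore passes to $F'$ after shrinking, since a dense set meets every dense open in a dense set.

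\textbf{Step 3: conclude.} Lemma~\ref{rigidity_smooth} now applies to $F'$ and yields $Y$, a dense open $T^\circ$ and $f\colon X\ratmap Y$ with $\mathcal{Y}'|_{T^\circ}$ birational to $Y\times T^\circ$ and $F'\sim f\times T^\circ$. Composing with the birational map $\mathcal{Y}'\to\mathcal{Y}$ transfers this to $F$.

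The main obstacle is Step 2. One must ensure that the fiberwise maps of the chosen model are neat with snc data and that the general-type property really is preserved fiberwise. I would handle this through the fiberwise statement in Lemma~\ref{lemma:raynaud_gruson} combined with the model-independence of $\kappa$ in Remark~\ref{compute_kodaira_dim_on_neat_model}. A secondary subtlety is making sure that $\mathcal{D}$ remains snc both globally and on fibers after the modifications.
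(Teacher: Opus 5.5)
Your proposal is correct and follows essentially the same route as the paper. You resolve the indeterminacy of $F$, use Lemma~\ref{lemma:raynaud_gruson} to get a relatively neat, fiberwise neat model with snc orbifold base, and shrink $T$ so that everything is smooth and $\mathcal{X}\setminus\mathcal{D}\to(X\setminus D)\times T$ is fiberwise proper birational (so the general-type condition on fibers is unchanged), then apply Lemma~\ref{rigidity_smooth}; you are in fact slightly more explicit than the paper about the modified target $\mathcal{Y}'$, the connectedness of its fibers, and the snc condition on $\mathcal{D}_t$.
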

\begin{proof}
Let $\mathcal{X} \to X \times T$ be a proper birational surjective morphism with $\mathcal{X}$ a smooth projective variety such that the induced rational map $\mathcal{X} \ratmap \mathcal{Y}$ is a morphism.
Let $\mathcal{D}$ be the preimage of $D \times T$ in $\mathcal{X}$.
Using Lemma~\ref{lemma:raynaud_gruson} and shrinking $T$ if necessary, we may assume that the morphism $\mathcal{X} \setminus \mathcal{D} \to \mathcal{Y}$ is relatively neat over $T$, that the orbifold base of $\mathcal{X} \setminus \mathcal{D} \to \mathcal{Y}$ has snc support, that $p \colon \mathcal{X} \to T$ and $q \colon \mathcal{Y} \to T$ are smooth projective, and that $\mathcal{X}_t \to \mathcal{Y}_t$ is neat for every $t \in T$.
The induced morphism $\mathcal{X} \setminus \mathcal{D} \to (X \setminus D) \times T$ is proper birational and hence, after possibly shrinking $T$, it is also fiberwise proper birational.
In particular, for any $t \in T$, the morphism $X \setminus D \to \mathcal{Y}_t$ is of general type if and only if $(\mathcal{X} \setminus \mathcal{D})_t \to \mathcal{Y}_t$ is so.
Thus, we may identify the morphism $\mathcal{X} \setminus \mathcal{D} \to \mathcal{Y}$ with $F$ and we denote by $\Delta = \Delta_{F,\mathcal{D}}$ its orbifold base.
The result now follows from Lemma~\ref{rigidity_smooth}.
\end{proof}

\section{The proof of Maehara's theorem for maps of general type}\label{section:proof_of_main_theorem}

We first prove Maehara's theorem in the equidimensional setting.

\begin{lemma}\label{severi_equidimensional_case}
Let $X$ be a smooth variety. Then the set of equivalence classes of dominant maps of general type $X \ratmap Y$ with $\dim Y = \dim X$ is finite.
\end{lemma}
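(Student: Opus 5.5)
We combine the boundedness result, Theorem~\ref{thm:boundedness}, with generic rigidity, Theorem~\ref{thm:rigidity}, along the lines of Maehara's original argument. Fix a smooth projective compactification $\overline{X}$ of $X$ with snc boundary $D$. Replacing $X$ by a proper birational modification changes neither the equivalence classes of dominant rational maps nor, by Lemma~\ref{orbifold_base_wd}, the property of being of general type. So we may assume $X = \overline{X} \setminus D$.

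By Theorem~\ref{thm:boundedness}, every equidimensional dominant map of general type $f \colon X \ratmap Y$ is equivalent to the image factorization of some $f_h \colon X \ratmap \mathbb{P}^m$, where $h$ is a closed point of one of finitely many finite type schemes $H_1,\dots,H_n$. Let $S_m \subseteq H_m$ be the set of points $h$ for which $\mathcal{F}_{m,h}$ is the graph of a rational map $X \ratmap \mathbb{P}^m$ that is generically finite onto its image and of general type. It suffices to show that the points of each $S_m$ give only finitely many equivalence classes.

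We argue by Noetherian induction on closed subsets of $H_m$. Let $Z$ be an irreducible component of the closure of $S_m$, and set $T := Z_{\mathrm{red}}$. After shrinking $T$ to a dense open, we may assume the following: $T$ is smooth; the universal graph over $T$ is flat, with fibers that are graphs of maps from $X$; and the closure of the image of the induced map $X \times T \ratmap \mathbb{P}^m \times T$ is a family $\mathcal{Y}' \to T$ whose fibers are the images $Y_t$. Resolving $\mathcal{Y}'$ generically over $T$ and shrinking $T$ further, we obtain a smooth projective morphism $q \colon \mathcal{Y} \to T$. Its fibers are geometrically connected, being birational to the integral images $Y_t$. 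This gives a dominant generically finite rational map $F \colon (X\setminus D) \times T \ratmap \mathcal{Y}$ over $T$. The points $t \in S_m \cap T$ are dense in $T$, and for such $t$ the map $F_t$ is equivalent to $f_t$. Hence $F_t$ is of general type for a dense set of $t$.

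Theorem~\ref{thm:rigidity} now gives a dense open $T^\circ \subseteq T$ and a single rational map $f \colon X \ratmap Y$ such that $F$ is equivalent to $f \times T^\circ$ over $T^\circ$. Restricting to a general fiber, we get that $F_t$ is equivalent to $f$ for $t$ in a possibly smaller dense open. Thus all of $S_m \cap T^\circ$ contributes one equivalence class, and we pass to the proper closed subset $\overline{S_m} \setminus T^\circ$ and repeat. Noetherian induction terminates, so the number of classes is finite.

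The main obstacle is the spreading-out step. We need the family of images to be birationally a smooth projective family with connected fibers, and, on a dense open, the fiberwise equivalence of $F_t$ with $f_t$ must survive the generic resolution. The delicate point is that the birational trivialization $\mathcal{Y}|_{T^\circ} \simeq Y \times T^\circ$ must restrict to a birational map on general fibers. I would first handle this by shrinking $T^\circ$ until the trivialization is an isomorphism over dense opens of the fibers. Constructibility arguments then let us restrict to fibers while keeping density of the general-type points.
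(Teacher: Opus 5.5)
Your proposal is correct and is essentially the paper's proof. Boundedness (Theorem~\ref{thm:boundedness}) gives finitely many parameter spaces, generic rigidity (Theorem~\ref{thm:rigidity}) makes a dense open of each irreducible closed subset with dense general-type points parametrize a single equivalence class, and Noetherian induction concludes; the spreading-out you spell out (the family of images, a generic resolution to a smooth projective family with connected fibers, restricting the trivialization to general fibers) is left implicit in the paper. The only cosmetic fix is to recurse on the closed set $(Z\setminus T^\circ)\cup\bigcup_{Z'\neq Z} Z'$, with $Z'$ running over the other components of $\overline{S_m}$, since $\overline{S_m}\setminus T^\circ$ need not be closed.
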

\begin{proof}
By Theorem~\ref{thm:boundedness}, there are finitely many finite type $k$-schemes $H_1, \ldots, H_n$, equipped with closed subschemes $\mathcal{F}_m \subseteq H_m \times (X \times \mathbb{P}^m)$ for $m = 1, \ldots, n$ such that, for every $f \colon X \ratmap Y$ as in the theorem statement, there is a closed point $h$ in some $H_m$ such that $\mathcal{F}_{m,h}$ is the graph of some rational map $X \ratmap \mathbb{P}^m$ whose image factorization is equivalent to $f$.

Now, consider an irreducible component $H \subseteq H_m$ and consider the set of closed points $h \in H$ such that $\mathcal{F}_{m,h}$ is the graph of some dominant rational map $X \ratmap Y$ whose image factorization is of general type.
Either, this set is dense in $H$, in which case Theorem~\ref{thm:rigidity} implies that there is a nonempty, hence dense, open $U \subseteq H$ such that the closed points $h \in U$ all parametrize equivalent rational maps, or it is not dense, in which case we let $U \subseteq H$ be the complement of its closure.
Repeating this argument by applying Theorem~\ref{thm:rigidity} to the irreducible components of $H \setminus U$ and so on, we see that by noetherian induction, the closed points of the $H_i$ only parametrize finitely many equivalence classes of rational maps.
This implies the claim.
\end{proof}

Theorem~\ref{thm:severi_orbifold} now follows from Lemma~\ref{severi_equidimensional_case} by cutting $X$ with hyperplane sections (this is our analogue of Maehara's \cite[Lemma~6.3]{Maehara}).

\begin{proof}[Proof of Theorem \ref{thm:severi_orbifold}]
It suffices to prove this statement for any fixed value of the difference $\dim X - \dim Y$.
We now proceed by induction on $\dim X - \dim Y$.
The base case $\dim X - \dim Y = 0$ is Lemma~\ref{severi_equidimensional_case}.
For the induction step, we argue by contradiction.
So suppose that $(f_i \colon X \ratmap Y_i)_{i=1}^\infty$ is a sequence of pairwise non-equivalent dominant rational maps of general type, where all $Y_i$ have the same dimension $d$.
Replacing $X$ by a dense open subset if necessary, we may assume that $X$ is quasi-projective.
Fix an immersion $X \subseteq \mathbb{P}^n$.
Replacing the ground field by an extension field if necessary, let $H \subseteq X$ be a very general hyperplane section.
Then the $f_i|_H$ are still dominant rational maps and are still pairwise non-equivalent.
Moreover, by Corollary~\ref{precomposition_stability}, they are still of general type.
Since $\dim H - d < \dim X - d$, this contradicts the induction hypothesis, which finishes the proof.
\end{proof}

\subsection{Maehara's theorem for varieties of log-general type} \label{section:log} 
 
Since every dominant morphism to a smooth variety of log-general type is a morphism of general type, Corollary~\ref{cor:log_severi} from the introduction is now an immediate consequence of Theorem~\ref{thm:severi_orbifold} and the following lemma.

\begin{lemma}
Let $X$, $Y_1$, and $Y_2$ be smooth varieties and let $p \colon X \ratmap Y_1$ and $q \colon X \ratmap Y_2$ be two equivalent proper-rational maps.
Then $p$ and $q$ are proper-birationally equivalent.
\end{lemma}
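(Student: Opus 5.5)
The statement says that two proper-rational maps $p \colon X \ratmap Y_1$, $q \colon X \ratmap Y_2$ with $\psi \circ q = p$ for some birational $\psi \colon Y_2 \ratmap Y_1$ are already related by a \emph{proper}-birational $\psi$. The plan is to show that $\psi$ and $\psi^{-1}$ are proper-rational, using the properness of $p$ and $q$ on a common model of $X$.

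\textbf{Step 1 (common model).} Choose a proper birational morphism $X' \to X$ with $X'$ smooth such that both $p$ and $q$ become proper morphisms $p' \colon X' \to Y_1$ and $q' \colon X' \to Y_2$. By definition each map has its own model. Taking a resolution of the main component of the fiber product of the two models over $X$ gives a common one. Composing a proper morphism with a proper birational morphism keeps it proper, so both maps are proper on this common model.

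\textbf{Step 2 (graph of $\psi$).} Consider $\Gamma \subseteq Y_2 \times Y_1$, the closure of the graph of $\psi$ over the locus where $\psi$ is defined. Since $p' = \psi \circ q'$ as rational maps, the morphism $(q',p') \colon X' \to Y_2 \times Y_1$ lands in $\Gamma$. Both $p'$ and $q'$ are dominant (they are proper-rational maps in the equivalence relation) and proper, hence surjective, so $(q',p')$ surjects onto its image. That image is closed because $(q',p')$ is proper, being the composite of the proper graph morphism $X' \to X' \times Y_1$ with the proper $q' \times \mathrm{id}$. It is irreducible and contains a dense subset of $\Gamma$, so it equals $\Gamma$.

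\textbf{Step 3 (properness of the projections).} Let $\pi_2 \colon \Gamma \to Y_2$ be the projection. Then $\pi_2 \circ (q',p') = q'$ is proper and $(q',p') \colon X' \to \Gamma$ is surjective. By the standard fact that if $g \circ h$ is proper, $h$ is surjective and $g$ is separated of finite type, then $g$ is proper, $\pi_2$ is proper. It is also birational, since $\psi$ is a birational map. After normalizing or resolving $\Gamma$ (a proper birational modification, which keeps properness), $\pi_2$ becomes a proper birational morphism $\Gamma' \to Y_2$, and $\psi$ is the composite of its inverse with the projection $\Gamma' \to Y_1$. That projection is proper by the same argument using $p'$. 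Hence $\psi$ is proper-rational. By symmetry $\psi^{-1}$ is proper-rational as well, so $\psi$ is proper-birational.

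\textbf{Expected obstacle.} The delicate point is Step 3, namely deducing properness of the graph projections from properness of the composites. This needs the surjectivity of $X' \to \Gamma$, which is why Step 2 establishes that $(q',p')$ has closed image equal to all of $\Gamma$.
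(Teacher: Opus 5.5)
Your proposal is correct and takes essentially the same route as the paper. Both arguments pass to a common model $X'$ on which both maps are proper morphisms and take the closed image of $X' \to Y_1 \times Y_2$; properness of both projections then follows from properness of $p'$, $q'$ and surjectivity of $X'$ onto that image, and birationality from $\psi$ being defined on a dense open. Identifying the image with the graph closure $\Gamma$ is a harmless elaboration, and resolving $\Gamma$ is unnecessary, since the definition of proper-rational does not require a smooth model.
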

\begin{proof}
By assumption, there is a birational map $\psi \colon Y_1 \ratmap Y_2$ such that $\psi \circ p = q$.
We have to show that $\psi$ is proper-birational.
To do so, let $X' \to X$ be a proper birational morphism such that $p' \colon X' \to X \ratmap Y_1$ and $q' \colon X' \to X \ratmap Y_2$ are proper morphisms.
Then, we can consider the morphism $(p', q') \colon X' \to Y_1 \times Y_2$.
This morphism is proper, so that in particular its image $Y' \subseteq Y_1 \times Y_2$ is closed.
Now, the projection onto the first component $\pi_1 \colon Y' \to Y_1$ is proper, since it is proper after precomposition with the surjective map $X' \to Y'$.
Moreover, if $U \subseteq Y_1$ is an open on which $\psi$ is a morphism, then $\pi_1$ is an isomorphism over $U$, so that $\pi_1$ is proper birational.
Similarly, the projection onto the second component $\pi_2 \colon Y' \to Y_2$ is proper as well, and it is a model for the rational map $\psi$.
Thus, $\psi$ is proper-birational, as desired.
\end{proof}

We stress that the analogous statement to Corollary \ref{cor:log_severi} formulated using strictly rational maps and strict-birational equivalence classes is wrong.
To see this, let $X$ be a minimal smooth projective surface of general type and let $U := X \setminus D$ be the complement of a prime divisor.
Then for every $p \in D$, the inclusion morphism $f_p \colon U \to X \setminus \{p\}$ is a morphism of general type.
However, $f_p$ and $f_q$ are strict-birationally equivalent if and only if $p$ and $q$ are equal up to an automorphism of $X$.
As $\Aut(X)$ is finite, we thus see that the $f_p$ fall into infinitely many strict-birational equivalence classes.
This does not contradict Corollary~\ref{cor:log_severi} since none of the $f_p$ are proper-rational maps.

Note that Corollary \ref{cor:log_severi} appears to be new even for curves.
In the case of curves, however, it can be deduced from the classical theorems of De Franchis and Severi.
As this simple argument (which adapts how one proves ``Faltings implies Siegel'' over number fields \cite[Exercise~E.11]{HindrySilverman}) does not seem to appear in the literature, we include a brief sketch for completeness in the following remark.

\begin{remark} \label{remark:severi_curves} 
Let $X$ be a variety over an algebraically closed field $k$ of characteristic zero. We give a direct proof of the fact that the set of isomorphism classes of smooth curves $Y$ of log-general type admitting a dominant morphism $X \to Y$ is finite.  

Consider smooth curves $Y_1, Y_2, \ldots$ of log-general type together with nonconstant morphisms $f_i \colon X \to Y_i$. For each $i$, choose a finite \'etale Galois morphism $Y_i' \to Y_i$ of degree at most six such that the smooth projective model $\overline{Y_i}'$ of $Y_i'$ has genus at least two. (Such an \'etale cover is easily shown to exist.) Set $X_i = X \times_{Y_i} Y_i'$; then $X_i \to X$ is a finite \'etale morphism of degree at most three. Thus, by the finiteness of \'etale covers of $X$ of bounded degree, we may choose a finite \'etale morphism $X' \to X$ such that every composition $X' \to X \to Y_i$ factors through $Y_i' \to Y_i$. 

By Severi’s classical theorem, since $X'$ dominates every $\overline{Y_i}'$, the set of isomorphism classes of the $\overline{Y_i}'$ is finite. Moreover, by the theorem of De Franchis, for any fixed $i$, the set of nonconstant morphisms $\mathrm{Hom}^{nc}(X',\overline{Y_i}')$ from $X'$ to any $\overline{Y_i}'$ is finite.  For every $g$ in $\mathrm{Hom}^{nc}(X',\overline{Y_i}')$ there are only finitely many dense opens of $\overline{Y_i}'$ containing $\mathrm{Im}(g)$. Since each $Y_i'$ contains the image of some $g$ in $\mathrm{Hom}^{nc}(X',\overline{Y_i}')$, we conclude that the $Y_i'$ are finite in number up to isomorphism. Since each $Y_i$ is a finite group quotient of $Y_i'$ and $\mathrm{Aut}(Y_i')$ is finite (Hurwitz), it follows that the curves $Y_i$ are finite in number up to isomorphism.
\end{remark}

\section{Campana's C-pairs} \label{section:cpairs}  

In this section we explain how Theorem~\ref{thm:severi_orbifold} yields C-pair analogues of the classical theorems of Severi and De Franchis for curves.
We then give a brief discussion of why Maehara’s theorem does not extend naturally to higher-dimensional C-pairs.

We briefly recall the definition of a C-pair and a morphism of C-pairs following \cite{Ca04, Ca11} (see also \cite[\S 1]{BJ}).
Let $X$ be a smooth variety, and let $\Delta = \sum_i (1-\frac{1}{m_i}) D_i$ be a $\mathbb{Q}$-divisor, where $m_i\in \mathbb{Z}_{\geq 1}\cup \{\infty\}$. We refer to $(X,\Delta)$ as a \emph{C-pair}, and say it is \emph{smooth} if $\supp \Delta$ has simple normal crossings, and \emph{proper} if $X$ is proper over $k$. A smooth proper C-pair $(X,\Delta)$ is of \emph{general type} if $K_X+\Delta$ is a big $\mathbb{Q}$-divisor. If $Y$ is a normal variety, a \emph{morphism} of C-pairs $f \colon Y \to (X, \Delta)$ is a morphism of varieties $f \colon Y \to X$ satisfying $f(Y) \nsubseteq \supp \Delta$ such that for every $i$, the coefficients of the pullback $f^*D_i$ are all at least $m_i$.\smallbreak 

The following simple lemma is immediate from the definitions.
\begin{lemma} \label{cpair_morphism_vs_orbifold_base}
Let $X$ be a smooth variety and let $(Y, \Delta_Y)$ be a smooth proper C-pair.
Let $f \colon X \to (Y, \Delta_Y)$ be a dominant morphism and let $\Delta_f$ be the orbifold base of the associated morphism of varieties $X \to Y$.
Then $\Delta_Y \leq \Delta_f$.
\end{lemma}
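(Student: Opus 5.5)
The plan is to verify the inequality coefficient by coefficient, since both $\Delta_Y$ and $\Delta_f$ are sums over prime divisors of $Y$ with coefficients of the form $1-\frac{1}{m}$. The function $m \mapsto 1-\frac{1}{m}$ is increasing on $\mathbb{Z}_{\geq 1}\cup\{\infty\}$ (with $\frac{1}{\infty}=0$). It therefore suffices to show, for every prime divisor $E\subseteq Y$, that the multiplicity $m_E$ of $E$ in $\Delta_Y$ satisfies $m_E \leq m_f(E)$. For prime divisors $E$ not in $\supp \Delta_Y$ the coefficient in $\Delta_Y$ is $0$, and there is nothing to prove because $\Delta_f$ is effective.

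Now fix a component $D_i$ of $\supp\Delta_Y$ with multiplicity $m_i$. Write
\[ f^*D_i = R + \sum_j a_j F_j \]
as in the definition of the orbifold base, with the $F_j$ dominating $D_i$. Pulling back is legitimate: $X$ is smooth, $D_i$ is Cartier on the smooth variety $Y$, and $f(X)\nsubseteq \supp\Delta_Y$ because $f$ is a C-pair morphism. The definition of a morphism of C-pairs says that every coefficient of $f^*D_i$ is at least $m_i$. In particular each $a_j\geq m_i$, so
\[ m_f(D_i)=\inf_j a_j \geq m_i. \]

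The only point needing care is the case where no $F_j$ exists, or where $m_i=\infty$. If there are no dominating components, then $m_f(D_i)=\infty$ by the empty-infimum convention, and the inequality holds trivially. If $m_i=\infty$, the C-pair condition says that $f^*D_i$ has all coefficients at least $\infty$, which we read as $f^{-1}(D_i)=\emptyset$ (the standard convention). Then there are again no $F_j$, so $m_f(D_i)=\infty$ as well. Combining the cases gives $1-\frac{1}{m_i}\leq 1-\frac{1}{m_f(D_i)}$ for every $i$, and hence $\Delta_Y\leq\Delta_f$.

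I expect no real obstacle here. The only subtlety is this bookkeeping of the $\infty$ conventions, which the argument above handles.
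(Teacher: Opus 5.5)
Your proof is correct and is exactly the coefficientwise unwinding of definitions the paper intends (the paper gives no written proof and calls the lemma immediate from the definitions): the C-pair condition bounds every coefficient of $f^*D_i$ below by $m_i$, in particular each $a_j$, so $m_f(D_i)\geq m_i$. The case $m_i=\infty$ is handled by the empty-infimum convention, and it does not even require your reading $f^{-1}(D_i)=\emptyset$, since no finite $a_j$ can satisfy $a_j\geq\infty$.
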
 

\begin{remark}
There are dominant morphisms $f \colon X \to Y$ of smooth varieties with orbifold base $\Delta_f$ such that $X \to (Y, \Delta_f)$ is not a morphism of C-pairs.
Such morphisms are necessarily non-flat and in particular must have $\dim Y \geq 2$; see \cite{Bartsch} for explicit examples.  
\end{remark}

The following result is an immediate consequence of Theorem~\ref{thm:severi_orbifold} and Lemma~\ref{cpair_morphism_vs_orbifold_base}.

\begin{theorem}[Severi for C-pairs]
Let $X$ be a smooth proper variety. 
Then the set of isomorphism classes of smooth proper C-pair curves $(C, \Delta_C)$ of general type admitting a nonconstant C-pair morphism $X \to (C, \Delta_C)$ is finite.
\end{theorem}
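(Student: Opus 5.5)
The plan is to reduce the statement to Theorem~\ref{thm:severi_orbifold}, applied to the source $X$, and then handle separately the data that Theorem~\ref{thm:severi_orbifold} forgets, namely the divisor $\Delta_C$.

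Let $f \colon X \to (C,\Delta_C)$ be a nonconstant C-pair morphism, with $(C,\Delta_C)$ a smooth proper C-pair curve of general type. Since $X$ is proper, $f$ is surjective, and the map $X \to C$ has an orbifold base $\Delta_f$. By Lemma~\ref{cpair_morphism_vs_orbifold_base} we have $\Delta_C \leq \Delta_f$, so $K_C + \Delta_f \geq K_C + \Delta_C$ is big. On a curve every dominant morphism from a smooth variety is flat on the normal target, hence neat, and any proper birational modification of $C$ is an isomorphism. By Remark~\ref{compute_kodaira_dim_on_neat_model}, $\kappa(f) = \kappa(C, K_C + \Delta_f) = 1$, so $f$ is of general type. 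Theorem~\ref{thm:severi_orbifold} then says that the maps $f$ fall into finitely many equivalence classes. Two equivalent maps to smooth proper curves differ by an isomorphism of the targets, since birational maps of smooth proper curves are isomorphisms. Consequently only finitely many curves $C$ occur up to isomorphism. More precisely, there are finitely many maps $f_1,\dots,f_s \colon X \to C_j$ such that every $f$ equals $\psi \circ f_j$ for some $j$ and some isomorphism $\psi$.

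It remains to bound the possible $\Delta_C$. Transport $\Delta_C$ along $\psi^{-1}$ to $C_j$. The resulting divisor satisfies $\psi^*\Delta_C \leq \Delta_{f_j}$, because the orbifold base is transported by the isomorphism $\psi$. Here $\Delta_{f_j}$ is a fixed divisor with finite support, and its coefficients are of the form $1-\frac1m$ with $m\in\mathbb{Z}_{\geq1}\cup\{\infty\}$. An effective divisor $\Delta \leq \Delta_{f_j}$ with standard coefficients has support inside $\supp\Delta_{f_j}$. Each of its coefficients is $1-\frac{1}{m'}$ with $m' \leq m$. Over a point where $m=\infty$, however, $m'$ may be arbitrary, and this is the main obstacle.

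To deal with it, I will use the definition of a C-pair morphism more carefully, rather than only the inequality of Lemma~\ref{cpair_morphism_vs_orbifold_base}. I expect the intended statement to count isomorphism classes of pairs. If the condition is that all coefficients of $f^*D_i$ are $\geq m_i$, then at a point $p$ with $f^{-1}(p)=\emptyset$ the condition is vacuous. This cannot happen when $X$ is proper, since $f$ is then surjective and every fiber is nonempty. So every point of $\supp\Delta_{f_j}$ has a nonempty fiber with a finite minimal multiplicity, and hence $m<\infty$ everywhere. Therefore only finitely many divisors $\Delta$ satisfy $\Delta\le\Delta_{f_j}$ with standard coefficients. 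Combining this with the finiteness of the $C_j$ gives finitely many pairs $(C,\Delta_C)$ up to isomorphism, which proves the theorem.
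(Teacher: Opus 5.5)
Your proposal is correct and follows the paper's proof essentially step by step. Lemma~\ref{cpair_morphism_vs_orbifold_base} makes $f$ of general type. Theorem~\ref{thm:severi_orbifold} then leaves only finitely many maps $X\to C$ up to isomorphism of the target. Finally, surjectivity of $f$ (from properness of $X$) forces every coefficient of $\Delta_f$ to be $<1$, so only finitely many standard-coefficient divisors lie below it. Your extra justification that the infimum in Definition~\ref{def:kodaira_dim_of_morphism} is attained on $C$ itself (flatness over a curve plus Remark~\ref{compute_kodaira_dim_on_neat_model}) fills in a step the paper only asserts.
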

\begin{proof}
First note that since we are only considering morphisms into curves, taking the infimum in Definition~\ref{def:kodaira_dim_of_morphism} is irrelevant.
If $f \colon X \to (C, \Delta_C)$ is a nonconstant C-pair morphism with $(C, \Delta_C)$ a C-pair curve of general type and $\Delta_f$ denotes the orbifold base of $X \to C$, we have that $\Delta_f \geq \Delta_C$ (Lemma~\ref{cpair_morphism_vs_orbifold_base}).
Hence $K_C + \Delta_f$ is big and thus, $f$ is a morphism of general type.
As a consequence, by Theorem~\ref{thm:severi_orbifold}, it remains to prove that for any fixed morphism $f \colon X \to C$, there are only finitely many C-pair divisors $\Delta_C$ on $C$ for which $X \to (C, \Delta_C)$ is a C-pair morphism.
Since $X$ is proper, the morphism $X \to C$ is surjective, so that the coefficients of $\Delta_f$ are all strictly less than $1$.
The result now follows from the elementary fact that for any fixed $\mathbb{Q}$-divisor $D$ on any variety $Y$ whose coefficients are strictly less than $1$, there are only finitely many C-pair divisors $\leq D$ on $Y$.
\end{proof}

We can also deduce from Theorem \ref{thm:severi_orbifold} the following analogue of the theorem of De Franchis for C-pairs of dimension one; this result was first proven by Campana \cite[\S3]{CampanaMultiple} and is a special case of \cite[Theorem~1.1]{BJ}.  

\begin{theorem}[De Franchis for C-pairs] \label{de_franchis_for_cpairs}
Let $X$ be a smooth variety and let $(C, \Delta_C)$ be a smooth proper C-pair curve of general type.
Then the set of nonconstant morphisms $X \to (C, \Delta_C)$ is finite.
\end{theorem}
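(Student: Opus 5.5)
We deduce the statement from Theorem~\ref{thm:severi_orbifold} combined with a rigidity statement for maps to a fixed target. First, every nonconstant morphism $f\colon X \to (C,\Delta_C)$ is dominant onto the curve $C$. By Lemma~\ref{cpair_morphism_vs_orbifold_base} we have $\Delta_f \geq \Delta_C$. Since $\dim C = 1$, the infimum in Definition~\ref{def:kodaira_dim_of_morphism} is realized on $C$ itself: every birational map of smooth proper curves is an isomorphism, and $X\to C$ is neat because $C$ is a curve (any dominant map to a smooth curve is flat). Hence $K_C+\Delta_f \geq K_C+\Delta_C$ is big, so $f$ is of general type. By Theorem~\ref{thm:severi_orbifold}, the maps $f$ fall into finitely many equivalence classes. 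Two maps $f,f'$ in the same class differ by a birational self-map of $C$, hence by an automorphism $\psi \in \Aut(C)$ with $f' = \psi\circ f$.

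It therefore remains to show that, for fixed $f$, only finitely many $\psi\in\Aut(C)$ make $\psi\circ f$ a C-pair morphism $X\to(C,\Delta_C)$. The orbifold base of $\psi\circ f$ is $\psi_*\Delta_f$, so Lemma~\ref{cpair_morphism_vs_orbifold_base} forces $\psi^{-1}_*\Delta_C \leq \Delta_f$. I would split into cases according to whether the group $G$ of automorphisms preserving $\Delta_C$ is finite.

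\emph{Case $G$ finite.} Then only finitely many $\psi$ can occur. Indeed, if $\Delta_f$ has coefficients $<1$, there are only finitely many C-pair divisors $\le \Delta_f$ (the elementary fact used in the previous proof). If $\Delta_f$ has coefficient $1$ at some points (possible when $X$ is not proper and $f$ misses points), the divisors $\psi^{-1}_*\Delta_C$ are still constrained to have support in the finite set $\supp\Delta_f$ with prescribed multiset of coefficients, so there are again finitely many. Each such divisor determines $\psi$ up to the finite group $G$.

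\emph{Case $G$ infinite.} I expect this to be the main obstacle. Since $K_C+\Delta_C$ is big, i.e.\ of positive degree, $G$ is always finite: for $g(C)\ge2$ this is Hurwitz; for $g(C)=1$, the divisor $\Delta_C$ is nonzero and automorphisms preserving its finite support form a finite group; for $C=\mathbb{P}^1$, $\deg\Delta_C>2$ forces at least three support points, so $G$ is finite. I would verify this case analysis carefully, and also confirm that $\psi^{-1}_*\Delta_C\le\Delta_f$ pins down $\psi$ up to $G$, since the support of $\psi^{-1}_*\Delta_C$ is contained in the finite set $\supp \Delta_f$. Combining the two steps gives finiteness of the set of nonconstant morphisms.
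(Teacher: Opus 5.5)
Your proof is correct and follows essentially the same route as the paper. Theorem~\ref{thm:severi_orbifold} together with Lemma~\ref{cpair_morphism_vs_orbifold_base} reduces the problem to a single equivalence class, and finiteness within that class comes from the finiteness of $\supp \Delta_f$ combined with the finiteness of a stabilizer group. The difference is cosmetic: you count divisors $\psi^*\Delta_C \leq \Delta_f$ modulo the stabilizer of $\Delta_C$ in $\Aut(C)$, and you verify explicitly that this stabilizer is finite, which also shows your ``Case $G$ infinite'' never occurs. The paper instead counts the dense opens $\phi(C\setminus\supp\Delta_C)\supseteq C\setminus\supp\Delta_2$ modulo the finite group $\Aut(C\setminus\supp\Delta_C)$.
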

\begin{proof}
By Theorem~\ref{thm:severi_orbifold}, there are only finitely many equivalence classes of nonconstant morphisms $f \colon X \to C$ whose orbifold base $\Delta_f$ is of general type.
Since a nonconstant morphism $X \to (C, \Delta_C)$ must have an orbifold base of general type (Lemma~\ref{cpair_morphism_vs_orbifold_base}), it thus suffices to prove that in any fixed equivalence class of morphisms $X \to C$, only finitely many representatives are C-pair morphisms $X \to (C, \Delta_C)$.
To do so, consider two equivalent nonconstant morphisms $f_1 \colon X \to C$ and $f_2 \colon X \to C$ whose orbifold bases $\Delta_1$ and $\Delta_2$ are both $\geq \Delta_C$ and let $\phi \colon C \to C$ be the automorphism of $C$ such that $\phi \circ f_1 = f_2$.
In this case, we have $\phi^* \Delta_2 = \Delta_1$ and in particular, $\phi$ restricts to an isomorphism between $C \setminus \supp \Delta_1$ and $C \setminus \supp \Delta_2$.
Moreover, this isomorphism gives an isomorphism of $C \setminus \supp \Delta_C$ with a dense open of $C$ containing $C \setminus \supp \Delta_2$.
As there are only finitely many dense opens of $C$ containing $C \setminus \supp \Delta_2$, there are thus only finitely many options for what $\phi(C \setminus \supp \Delta_C)$ can be.
Since the automorphism group of $C \setminus \supp \Delta_C$ is finite, there are also only finitely many choices for what the isomorphism $C \setminus \supp \Delta_C \to \phi(C \setminus \supp \Delta_C)$ can be, and thus only finitely many options for $\phi$.
Thus, fixing $f_2$, there are only finitely many options for $f_1$, which is what we wanted to prove.
\end{proof}

The preceding arguments rely heavily on the fact that the target is a C-pair curve.
The following two remarks explain why analogous finiteness statements  for higher-dimensional C-pairs are less natural, which motivates our use of the orbifold base instead.

\begin{remark}[On birational equivalence for C-pairs]
There is no satisfactory notion of birational equivalence for C-pairs.
Given a birational morphism $\mu \colon Y' \to Y$ of smooth varieties and a C-pair structure $(Y, \Delta_Y)$, it is in general unclear how to define natural multiplicities on the $\mu$-exceptional divisors so as to obtain a birational transform $(Y', \Delta_{Y'})$ that is unique or functorial in any reasonable sense.
This issue has been observed repeatedly (see for example Chen--Lehmann--Tanimoto's discussion of Campana weak approximation over function fields \cite[Remark~5.5]{CLT}). 
Consequently, even \emph{formulating} finiteness statements ``up to birational equivalence of C-pairs'' is problematic.
\end{remark}

\begin{remark}[On finiteness of C-pair targets]\label{rem:finiteness_targets_c_pairs}
Let $f \colon X \to Y$ be a dominant morphism between smooth varieties, and let $\Delta_f$ be its orbifold base.
There are typically many C-pair structures $(Y, \Delta_Y)$ for which $f$ becomes a morphism of C-pairs: as long as $f$ is flat, any C-pair divisor $\Delta_Y \leq \Delta_f$ suffices.
If one allows non-surjective morphisms $X \to Y$, then already a single fixed rational map $f$ admits infinitely many possible C-pair targets, and thus no meaningful finiteness statement can hold.

One could attempt to remedy this by restricting to those C-pairs satisfying $\Delta_Y = \Delta_f$, but then we might as well just work with the orbifold base itself.
Restricting to surjective maps is another option, but this is an unnatural limitation (cf. Remark \ref{remark:severi_curves}) and still does not prevent a single rational map from giving rise to several non-isomorphic C-pair targets.
\end{remark}

\bibliography{refs_severi}{}
\bibliographystyle{alpha}

\end{document}